\documentclass[12pt, pdftex]{amsart}
\usepackage[margin=1in]{geometry}
\usepackage[utf8]{inputenc}
\usepackage{tikz}
\usetikzlibrary{arrows.meta,decorations.pathreplacing,decorations.markings,shapes,calc}
\usetikzlibrary{matrix,arrows,decorations.pathmorphing,backgrounds,decorations.markings,positioning}
% this package is used to rotate \sim.
\usepackage{graphics}

\usepackage[all]{xy}
\usepackage{comment}
\usepackage{amsmath}
\usepackage{amssymb}
\usepackage{amsthm}
\usepackage{here}
\usepackage{amscd} 
\usepackage{tikz-cd}
\usepackage{mathrsfs}
\usepackage{mathtools}
\usepackage{mathabx}
\usepackage{scalefnt}
\usepackage{url}
\usepackage{breqn}
\theoremstyle{plain}
\newtheorem{thm}{Theorem}[section]
\newtheorem{lem}[thm]{Lemma}
\newtheorem{cor}[thm]{Corollary}
\newtheorem{prop}[thm]{Proposition}

\newtheorem{redm}[thm]{Reduction Method}
\theoremstyle{definition}

\newtheorem{rem}[thm]{Remark}

\newtheorem{defn}[thm]{Definition}
\newtheorem{prob}[thm]{Problem}

\newtheorem{ex}[thm]{Example}

\numberwithin{equation}{section}
\def\A{{\mathbb A}}

\def\Q{{\mathbb Q}}

\def\Z{{\mathbb Z}}
\def\C{{\mathbb C}}
\def\P{{\mathbb P}}
\def\B{{\mathbb B}}

\def\SL{\mathop{\mathrm{SL}}\nolimits}

\def\dim{\mathop{\mathrm{dim}}\nolimits}

\def\Stab{\mathop{\mathrm{Stab}}\nolimits}

\def\tor{\mathop{\mathrm{T}}}
\def\BB{\mathop{\mathrm{BB}}}

\def\G{\mathrm{G}}
\def\K{\mathrm{K}}
\def\T{\mathrm{T}}
\def\BB{\mathrm{B}}

\def\L{\mathscr{L}}

\def\M{\mathcal{M}}

\def\OO{\mathscr{O}}

\def\A{\mathcal{A}}

\def\Sp{\mathop{\mathrm{Sp}}\nolimits}

\allowdisplaybreaks[4]

\newcommand{\defeq}{\vcentcolon=}

%\usepackage{etoolbox}
%\PassOptionsToPackage{table,xcdraw}{xcolor} 
%\usepackage{tcolorbox}   
%\usepackage{xcolor}
%\tcbuselibrary{skins}   

%%%%%%%%%%%%%%%%%%%%%%%%%%%%%%%%%%%%%%%%%%%%%%%%% FOOTNOTES %%%%%%%%%%%%%%%%%%%%%%%%%%%%%%%%%%%%%%%%%%%%%%%%%%

%align page
\allowdisplaybreaks[1]

%%%%%%%%%%%%%%%%%%%%%%%%%%%%%%%%%%%%%%%%%%%%%%% AUTHOR, TITLE, ABSTRACT INFO %%%%%%%%%%%%%%%%%%%%%%%%%%%%%%%%%%%%%%%%%%%%%%%%%%%%%%%%%%%%%%%%%%%%%

\begin{document}

\title[The Universe of Deligne--Mostow Varieties]{The Universe of Deligne--Mostow Varieties}
\author[Klaus Hulek and Yota Maeda]{Klaus Hulek$^{1}$ \and Yota Maeda$^{2, 3}$}
\email{hulek@math.uni-hannover.de\\ y.maeda.math@gmail.com}

\date{\today}

\maketitle
\vspace{-1em}
\begin{center}
  \begin{minipage}{0.9\textwidth}
    \centering
    {\small
    $^{1}$ Institut für Algebraische Geometrie, Leibniz University Hannover, Germany.\\
    $^{2}$ Fachbereich Mathematik, Technische Universität Darmstadt, Germany.\\
    $^{3}$ Mathematical Institute, Tohoku University, Japan.\\[0.5ex]
    }
  \end{minipage}
\end{center}

\vspace{1.5em}

\begin{abstract}
Deligne and Mostow investigated period maps on the configuration spaces $M_{0,n}$ of $n$ ordered points on $\P^1$.
The images of these maps are open subsets of certain ball quotients. Moreover, they extend to isomorphisms between GIT-quotients and the Baily-Borel compactifications.
Building on a theorem of Gallardo, Kerr and Schaffler, the period maps lift to isomorphisms between two natural compactifications, namely the Kirwan blow-up and the toroidal compactification. In this paper, we look at the more general situation where we also allow unordered or partially ordered $n$-tuples. Our main result is an easily verifiable criterion that, in this broader setting, determines when the Deligne-Mostow period maps still lift to isomorphisms between the Kirwan blow-up and the toroidal compactification.  
We further investigate a partial ordering among Deligne-Mostow varieties, which reduces this problem to considering minimal or maximal Deligne-Mostow varieties with respect to this partial ordering. 
As a byproduct, we prove that, in general, Kirwan's resolution pair is not a log canonical log minimal model and not log $K$-equivalent to the unique toroidal compactification.
\end{abstract}

\section{Introduction}
\subsection{Background of period maps}
Let $\M$ be a moduli space and $D/\Gamma$ be a modular variety connected to $\M$ via a period map,
where $D$ is a Hermitian symmetric domain and $\Gamma$ is an arithmetic subgroup.
A fundamental problem is:
\begin{prob}
\label{prob:extendability}
    When does a period map 
    \[\M\to D/\Gamma\]
    extend to a morphism between suitable algebraic  compactifications
        \[\overline{\M}\to \overline{D/\Gamma}?\]
\end{prob}
There are a number of related deep studies in this topic: semi-toroidal compactifications \cite{Sha80, Loo85, Loo86, Loo03a, Loo03b}, KSBA compactifications \cite{Laz16, MS21} and K-stability \cite{ABE22, AE23}, solely in the case of moduli spaces of K3 surfaces.
Recently, Alexeev, Engel, Odaka, et. al. \cite{AE23, AEH21, Oda22} gave an interpretation of semi-toroidal compactifications of $D/\Gamma$ in terms of $K$-stability and log minimal model program (LMMP) (which we 
will use in Section \ref{section:applications}).
Whether period maps can be extended to suitable compactifications depends on the situation, in particular the choice of the compactification on both sides.
This in turn is closely related to the moduli interpretation of the boundary $\overline{\M}\setminus\M$.

The very first example that comes to mind is the following well-known case:  there is an isomorphism from the set of isomorphism classes of elliptic curves (which carries in a natural way the structure of a Riemann surface)  to (the open subset of) the modular curve 
\begin{align}
\label{mor:ex_modular_curve}
    \M_{1,1}\stackrel{\sim}{\longrightarrow}Y(1)\defeq\mathbb{H}/\SL_2(\Z) (\cong \C).
\end{align}
The left-hand side has a compactification, which is known as the Deligne-Mumford compactification $\overline{\M}_{1,1}$, adding a point to $\M_{1,1}$ corresponding to the nodal curve $y^2=x^3-xy$.
In this case, the right-hand side only allows one compactification (as a Riemann surface), namely $\P^1$.
The isomorphism (\ref{mor:ex_modular_curve}) can be extended to 
\[\overline{\M}_{1,1}\stackrel{\sim}{\longrightarrow}\overline{Y(1)} (\cong \P^1),\]
where $\overline{Y(1)}=Y(1)\cup\{\infty\}$ is the level 1 modular curve.
Hence, we obtain a positive answer to Problem \ref{prob:extendability} for the moduli spaces of elliptic curves.

But this is a very simple case and in general the situation is much more complex and subtle. To give an example,  we consider the Torelli map:
\begin{align}
\label{mor:Torelli_map}
    \M_g&\to \A_g\defeq\mathbb{H}_g/\Sp_{2g}(\Z)\\
    C&\mapsto \mathrm{Jac}(C).\notag
\end{align}
There is a very natural compactification on the left-hand side, namely the Deligne-Mumford compactification $\overline{\M_g}$ (generalizing $\overline{\M}_{1,1}$).
Unlike in the case of modular curves, there are several ways to compactify the Siegel modular varieties $\A_g$ algebraically.
There is the minimal compactification, which is the Baily-Borel compactification $\overline{\A_g}^{\BB}$.
Its boundary $\overline{\A_g}^{\BB}\setminus\A_g$ consists of the low-dimensional Siegel modular varieties.
By taking Jacobians of the normalization of a curve in $\overline{\M_g}\setminus\M_g$, one can show that the Torelli map (\ref{mor:Torelli_map}) can be extended to a morphism
\[\overline{\M_g}\to\overline{\A_g}^{\BB}.\]
From the point of view of moduli, this map loses a lot of geometric information. 
Now, there are other choices to compactify $\A_g$ algebraically, which are known as toroidal compactifications $\overline{\A_g}^{\T, \tau}$.
These compactifications are constructed via toric geometry and hence depend on the datum of a fan $\tau$, which determines a cone decomposition.
There are several ways to choose a fan $\tau$, and the three standard ones are
\begin{enumerate}
    \item $\tau^{\mathrm{perf}}$: 1st Voronoi fan 
    \item $\tau^{\mathrm{vor}}$: 2nd Voronoi fan
    \item $\tau^{\mathrm{cent}}$: central cones;
\end{enumerate}
see \cite[Introduction]{AB12} for the details.
Mumford and Namikawa \cite{Nam76a, Nam76b} proved that the Torelli map (\ref{mor:Torelli_map}) can be extended for $\tau=\tau^{\mathrm{vor}}$.
In addition, Alexeev and Brunyate \cite{AB12} obtained an analogous positive result for $\tau=\tau^{\mathrm{perf}}$.
However, it is known that the situation for $\tau^{\mathrm{cent}}$ is different.
Though there is a regular map 
\[\overline{\M_g}\to\overline{\A_g}^{\T, \tau^{\mathrm{cent}}}\]
for $g\leq 8$ by Namikawa \cite{Nam73} and Alexeev et. al \cite{ALT+12}, Alexeev and Brunyate \cite{AB12} proved that the map cannot be  extended for $g\geq 9$.
These results show that the situation can be very subtle and one cannot expect a uniform answer to problem \ref{prob:extendability}.

Here we shall study the period maps constructed by Deligne and Mostow \cite{DM86, Mos86} on the configuration space $M_w\defeq M_{0,n}$ of ordered $n$-tuples on $\P^1$ 
and its quotient by certain symmetric groups $S[w]$ defined below.

Let $n\geq 5$ be a positive integer and $w=(w_1,\cdots, w_n)\in\Q_+^n$ be an $n$-tuple of weights with $\sum_i w_i = 2$.
We say that a weight vector $w$ satisfies \textit{condition INT} if 
\[(1-w_i-w_j)^{-1} \in \Z\]
for any pair $(w_i,w_j)$ satisfying $w_i+w_j<1$ and $i\neq j$.
For any such  pair $(n,w)$, Deligne and Mostow \cite{DM86} constructed a period map
\begin{align}
\label{mor:ordered_period_map}
    \tag{INT}\ M_w&\hookrightarrow\B^{n-3}/\Gamma_w.
    \end{align}
    through the monodromy of the hypergeometric differential forms on curves.
Here, $\B^{n-3}$ is the $(n-3)$-dimensional ball (type I domain) acted on by a  (not necessarily arithmetic) unitary group $\Gamma_w$.
The INT condition can be relaxed and generalized as follows.
Let $S$ be a nonempty subset of $\mathbb{N}_n\defeq\{1,\cdots, n\}$.
We say that $w$ satisfies the \textit{$\Sigma$INT condition for $S$} if $w_i=w_j$ for  $i,j\in S$ and 
\[(1-w_i-w_j)^{-1}\]
is an element of $\Z$ (resp. $1/2\Z$) if $i\not\in S$ or $j\not\in S$ (resp. $i,j\in S$) for any pair $(w_i,w_j)$ satisfying $w_i+w_j<1$ and $i\neq j$.
For such an $S$, we denote by $S[w]$ the symmetric group $\Sigma_{|S|}$ acting on $S$ and $w(S)\defeq w_i\in\Q$ the component of the weight vector $w$ corresponding to any $i$ contained in $S$.
In this case, there exists, according to \cite{Mos86}, a unitary group $\Gamma_{w,S}$ and  a period map
\begin{align}
    \label{mor:unordered_period_map}
    \tag{$\Sigma$INT-$S$}\ M_{w,S}\defeq M_{w}/S[w]&\hookrightarrow\B^{n-3}/\Gamma_{w,S}.
\end{align}
 One can check that the INT condition is equivalent to the $\Sigma$INT-$S$ condition for a one-point set $S$.
 In this way, in order to treat both cases in a uniform way, we consider $\Gamma_w$ and $M_{w}$ as a special case of $\Gamma_{w,S}$ and $M_{w,S}$. 
 In other words, if $S$ consists of one point, then we are in the situation of the INT condition and we simply use the notation $\Gamma_w$ and $M_w$ instead of $\Gamma_{w,S}$ and $M_{w,S}$.  
Note that since we consider the partition $\mathbb{N}_n = S \sqcup S^{c}$, the symmetry groups we consider are of the form $S[w] = \Sigma_{|S|}$. 
 
We call $(w,S)$ a \textit{Deligne-Mostow pair} if 
\begin{itemize}
    \item $w$ satisfies $\Sigma$INT-$S$,
    \item $\Gamma_{w,S}$ is a non-cocompact arithmetic subgroup.
\end{itemize}
We refer to  \cite[Tables 2, 3]{GKS21} for a complete list of the possible weights.
It is natural and geometrically interesting to study Deligne-Mostow theory in its most general setting possible, namely all cases of (partially) unordered sets which Deligne-Mostow theory can handle.
For this, we consider the following equivalence relation.
Two Deligne-Mostow pairs $(w,S)$ and $(w',S')$ are defined to be equivalent if and only if the length of $w$ and $w'$ coincide, denoted by $n$ here, and there is an 
element $g$ of the symmetric group $\Sigma_n$ so that $g(w) = w'$ and $g(S) = S'$.
We denote by $A_{\mathrm{DM}}$ the equivalence class of the set of all Deligne-Mostow pairs and call this the \textit{universe of the Deligne-Mostow varieties}.
Note that the $\Sigma$INT-$S$ condition in \cite{Mos86} is defined for a partition of $\mathbb{N}_n$ into $S$ and its complement $S^c$ and thus the symmetry group is of the form $S[w] = \Sigma_{|S|}$. 
A prior, it is also possible to consider a decomposition of $S$ into more than two subsets, resulting in 
symmetry groups such as $\Sigma_{r_1}\times\Sigma_{r_2}$ acting on two or more connected components. But this is not in the realm of Mostow's work, and therefore not considered in this paper, 
see also Remark \ref{rem:DengGallardo}.

As we will recall in Theorem \ref{thm:DM_BB}, the morphisms (\ref{mor:ordered_period_map}) and (\ref{mor:unordered_period_map}) can be compactified to give isomorphisms  
between GIT-quotients and the Baily-Borel compactifications.
As we shall further discuss below, it follows from the work of Gallardo, Kerr, and Schaffler, combined with results of Kiem and Moon, 
that the period map (\ref{mor:ordered_period_map}) can be extended to the Kirwan compactification of  $M_{w}$,  providing an isomorphism to the 
toroidal compactification; see Theorem \ref{thm:GKS21}.

In this paper, we shall investigate the partially unordered situation, namely when the morphism (\ref{mor:unordered_period_map}) can be extended to the toroidal compactification. A complete answer is given in Theorem \ref{mainthm:main_extendability}.

\subsection{Main results}
\label{subsection:main_results}
Deligne and Mostow proved the following:
\begin{thm}[{\cite{DM86, Mos86}}]
\label{thm:DM_BB}
    Let $(w, S)\in A_{\mathrm{DM}}$ be a Deligne-Mostow pair.
    Then the open immersion \eqref{mor:unordered_period_map},
   the complement of whose image is the discriminant divisor,
    can be extended to the GIT and Baily-Borel compactifications: 
     \begin{align}  
  \tag{C$\Sigma$INT-$S$}
 \label{isom:compactified_sigma_int}
M_{w,S}^{\G}\defeq(\P^1)^n/\!/_wS[w]\times\SL_2(\C)&\cong\overline{\B^{n-3}/\Gamma_{w,S}}^{\BB}=:X_{w,S}^{\BB}.
    \end{align}
\end{thm}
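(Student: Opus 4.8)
The plan is to reduce the statement to the purely ordered case and then descend along the finite quotient by $S[w]=\Sigma_{|S|}$. First I would treat the situation where $S$ is a one-point set, so that the $\Sigma$INT-$S$ condition specializes to the INT condition and $(w,S)$ reduces to an ordinary weight $w$. Here the target isomorphism $M_w^{\G}=(\P^1)^n/\!/_w\SL_2(\C)\cong X_w^{\BB}$ is the main theorem of \cite{DM86}, and rather than reprove it I would recall its mechanism: the period map is built from the periods of the multivalued form $\eta=\prod_i(x-x_i)^{-w_i}\,dx$ on $\P^1$, where the normalization $\sum_i w_i=2$ controls the behaviour at infinity, and whose monodromy as the $x_i$ vary defines a projective representation into the unitary group $\Gamma_w$ preserving a Hermitian form of signature $(1,n-3)$, hence a map to $\B^{n-3}/\Gamma_w$.

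Second, for this ordered case I would assemble the three ingredients of \cite{DM86}. (i) Local isomorphism: the derivative of the period map is computed through the Kodaira--Spencer cup product and shown to be invertible, so the period map is \'etale onto its image, with no Griffiths-transversality obstruction in rank one. (ii) The INT condition guarantees that $\Gamma_w$ is discrete and that the partial quotients along the collision strata are again ball quotients, so the image of $M_w$ is exactly the complement of the discriminant divisor. (iii) The boundary analysis: as a cluster of points collides, the corresponding period degenerates toward a cusp of $\B^{n-3}/\Gamma_w$, and one matches the GIT stratification of $(\P^1)^n/\!/_w\SL_2(\C)$ by collision type with the rank-one boundary components of the Baily--Borel compactification. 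Since both sides are normal projective varieties and the period map is an isomorphism on the dense open $M_w$ extending to a bijective morphism across the boundary, Zariski's main theorem upgrades this to an isomorphism of compactifications.

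Third, I would descend to the general $\Sigma$INT-$S$ situation. The indices in $S$ all carry the common weight $w(S)$, so $S[w]=\Sigma_{|S|}$ acts on $M_w$ by permuting those points, and this action lifts to an action on $\B^{n-3}$ normalizing $\Gamma_w$; by \cite{Mos86} the group $\Gamma_{w,S}$ is precisely the extension of $\Gamma_w$ by $S[w]$, and the $\tfrac12\Z$ clause built into the $\Sigma$INT condition for pairs inside $S$ is exactly what keeps $\Gamma_{w,S}$ discrete. Because the formation of GIT quotients commutes with passage to a further quotient by a finite group preserving the linearization, one has $M_{w,S}^{\G}=M_w^{\G}/S[w]$, and because the Baily--Borel compactification is functorial for the finite group $\Gamma_{w,S}/\Gamma_w\cong S[w]$, one has $X_{w,S}^{\BB}=X_w^{\BB}/S[w]$; quotienting the ordered isomorphism by $S[w]$ then yields \eqref{isom:compactified_sigma_int}.

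The main obstacle is the boundary matching in the second step: away from the discriminant everything is formal via local Torelli, but identifying which GIT strata of colliding points produce which Baily--Borel cusps, and verifying that the period map extends as an isomorphism rather than merely a finite map across them, is the genuine analytic content. In the descent step the subtlety is to confirm that $S[w]$ acts compatibly on both compactifications and that $\Gamma_{w,S}/\Gamma_w\cong S[w]$ with discreteness preserved, so that the two finite quotients literally agree; this is precisely where the half-integrality in the $\Sigma$INT condition must be invoked.
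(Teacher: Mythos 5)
The paper does not actually prove Theorem \ref{thm:DM_BB}; it is recalled from \cite{DM86, Mos86}, so your sketch must be measured against those original arguments. Your first two steps are a fair summary of the INT case of \cite{DM86}. The genuine gap is in your third step, the descent. Your argument presupposes that the ordered statement \eqref{isom:compactified_int} holds for the underlying weight $w$ and that $\Gamma_{w,S}$ is an extension of $\Gamma_w$ by $S[w]$, so that quotienting both sides by $S[w]$ yields \eqref{isom:compactified_sigma_int}. But for a general pair $(w,S)\in A_{\mathrm{DM}}$ the weight $w$ need \emph{not} satisfy INT: the $\Sigma$INT-$S$ condition only requires $(1-w_i-w_j)^{-1}\in\tfrac12\Z$ for $i,j\in S$, and the paper's own examples are genuinely half-integral, e.g.\ $w_{\mathrm{E}}=(1/6)^{12}$ with $S=\mathbb{N}_{12}$, and $w=(\tfrac13,\tfrac16,\dots,\tfrac16)$ with $S=\{2,\dots,11\}$ in Example \ref{ex:nontransversal}, both of which fail INT. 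For such weights there is no ordered isomorphism to descend from, and this is not merely a missing citation: when $(1-2w(S))^{-1}=k/2$ with $k$ odd, the monodromy of the half-loop interchanging two points of $S$ is a complex reflection $R$ of odd order $k$ whose square is a pure monodromy, so $R=(R^2)^{(k+1)/2}$ already lies in $\Gamma_w$; hence $\Gamma_{w,S}=\Gamma_w$, the group $S[w]$ acts on $\B^{n-3}/\Gamma_w$ only through a proper quotient (here trivially), and the ordered period map $M_w\to\B^{n-3}/\Gamma_w$ identifies distinct orderings, being generically $|S[w]|$-to-one rather than injective. Your claim that ``$\Gamma_{w,S}$ is precisely the extension of $\Gamma_w$ by $S[w]$'' is exactly what breaks down, and with it the identification $X_{w,S}^{\BB}=X_w^{\BB}/S[w]$ as a quotient of an \emph{ordered} Deligne--Mostow isomorphism.

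This is precisely why \cite{Mos86} is an independent piece of work rather than a formal corollary of \cite{DM86}: in the half-integral cases the period map becomes injective, and the compactified isomorphism becomes true, only after symmetrizing, so the map must be constructed directly on $M_{w,S}$ with monodromy taken in $\Gamma_{w,S}$ (built from the orbifold fundamental group of $M_{w,S}$) from the outset; one cannot first establish the ordered case and then quotient. Your descent is valid on the sub-family of pairs whose weight additionally satisfies INT---where it reproduces the compatibility of GIT and Baily--Borel with finite quotients that the paper uses elsewhere---but it cannot prove Theorem \ref{thm:DM_BB} on the rest of $A_{\mathrm{DM}}$, which includes many of the cases this paper most cares about.
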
    
If (INT) holds and $S$ is taken to be a point, then we drop the symbol $S$ from our notation and simply write    
 \begin{align}
       \tag{CINT} 
\label{isom:compactified_int}
M_w^{\G}\defeq(\P^1)^n/\!/_w\SL_2(\C)&\cong\overline{\B^{n-3}/\Gamma_w}^{\BB}=:X_w^{\BB}.
\end{align}

 For the definitions and properties of GIT problems on the configuration of points in $\P^1$, see \cite[Chapter 11]{Dol03} or \cite[Chapter 7]{Muk12}.
Here, the symbol ``C" means ``Compactified".
There are other choices for the compactification of these spaces.
Gallardo, Kerr and Schaffler clarified the relationship between two natural blow-ups of $M_w^{\G}\cong X_w^{\BB}$, namely the Kirwan blow-up $M_w^{\K}$ and the canonical toroidal compactification $X_w^{\T}$. The centres of these blow-ups are the polystable points and the cusps respectively, which are identified under the isomorphism (\ref{isom:compactified_int}).
We say that two compactifications of a Deligne-Mostow variety $M_{w,S}$ and the corresponding  ball quotient $\B^{n-3}/\Gamma_{w,S}$ respectively,  are \textit{naturally isomorphic} if 
the Deligne-Mostow isomorphism (\ref{mor:unordered_period_map}) from $M_{w,S}$ onto its image in  $\B^{n-3}/\Gamma_{w,S}$ extends to these compactifications.  
\begin{thm}[{\cite[Theorem 1.1]{GKS21}}]
\label{thm:GKS21}
    The Deligne-Mostow period map \eqref{mor:ordered_period_map}
    \[M_w \hookrightarrow X_w\defeq\B^{n-3}/\Gamma_w\]
    extends to an isomorphism
    \[M_w^{\K}\stackrel{\sim}{\longrightarrow}X_w^{\T}.\]
    In other words, the Kirwan blow-up and the toroidal compactification are naturally isomorphic in the INT case.
\end{thm}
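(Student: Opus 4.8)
The plan is to leverage Theorem \ref{thm:DM_BB} and reduce the statement to a local comparison over finitely many boundary points. By Theorem \ref{thm:DM_BB} we have an isomorphism $M_w^{\G}\cong X_w^{\BB}$ which restricts to an isomorphism between the stable locus of the GIT quotient and the open ball quotient $\B^{n-3}/\Gamma_w$, and which identifies the strictly semistable polystable points with the cusps of the Baily--Borel compactification. Both $M_w^{\K}\to M_w^{\G}$ and $X_w^{\T}\to X_w^{\BB}$ are proper birational morphisms, isomorphisms away from these finitely many boundary points, each replacing such a point by an exceptional divisor. Over the common open ball quotient the two modifications therefore agree, giving a birational map $\phi\colon M_w^{\K}\dashrightarrow X_w^{\T}$ that is an isomorphism there; first I would show that $\phi$ extends to a global isomorphism, and since both sides are normal and proper over $M_w^{\G}\cong X_w^{\BB}$ this reduces to exhibiting, at each boundary point, an isomorphism of germs compatible with $\phi$ on the punctured neighborhood.

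Next I would compute the local model on the toroidal side. Each $0$-dimensional cusp of the ball quotient $\B^{n-3}/\Gamma_w$ is a rank-one boundary component, so its rational closure is a point and the admissible fan consists of a single ray; consequently the canonical toroidal compactification is unique, with no choice of cone decomposition. Following Ash--Mumford--Rapoport--Tai, a punctured neighborhood of such a cusp is the quotient, by a finite group, of a punctured-disc bundle over an abelian variety, and filling in the disc produces the exceptional divisor of $X_w^{\T}$. I would record the weight of the $\C^{\times}$-action on the normal bundle of this divisor, which is dictated by the Heisenberg parabolic attached to the cusp, since this is exactly the datum that must be matched on the Kirwan side.

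On the GIT side, the relevant polystable configurations are those in which the $n$ marked points collapse onto two distinct points of $\P^1$, each cluster carrying total weight exactly $1$; such a configuration has stabilizer $\C^{\times}$. Applying Luna's slice theorem I would present the germ of $M_w^{\G}$ at such a point as the GIT quotient of the slice representation by $\C^{\times}$, and then describe Kirwan's partial desingularization explicitly as the blow-up of the $\C^{\times}$-fixed locus followed by the quotient of the resulting semistable locus. This yields the germ of $M_w^{\K}$ together with its exceptional divisor and the weights of the slice representation, packaged in the same linear-algebraic form as the toroidal datum above.

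The heart of the argument, and the step I expect to be the main obstacle, is to identify these two local models. Concretely, I would use the explicit hypergeometric description of the Deligne--Mostow period map near a collision of points to compute the analytic germ of the period map at a boundary cusp, and then verify that the lattice and $\C^{\times}$-weight data governing the toroidal filling coincide with the slice-representation weights governing Kirwan's blow-up. The subtlety is that the toroidal boundary is determined by the monodromy lattice of the hypergeometric local system, whereas the Kirwan exceptional divisor is determined by the linearized GIT data; matching them amounts to showing that these two pieces of data are exchanged by the period map, and this is where a careful analysis of the limiting mixed Hodge structure, rather than a formal comparison, seems unavoidable. Once the germs are identified compatibly with $\phi$ on the punctured neighborhoods, the local isomorphisms glue with the open-part isomorphism to give the desired global isomorphism $M_w^{\K}\stackrel{\sim}{\longrightarrow}X_w^{\T}$, and naturality with respect to the period map holds by construction.
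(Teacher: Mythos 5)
Your reduction to a germ-by-germ comparison over the finitely many polystable points (equivalently, cusps) is sound, and your descriptions of the two sides are essentially correct for the INT case: the fan at each cusp of a ball quotient is a single ray, so the toroidal compactification is unique and its boundary is a finite quotient of a disc bundle over an abelian variety; and on the GIT side the relevant polystable configurations have stabilizer $\C^{\times}$ (here you should blow up the origin of the Luna slice, i.e.\ the sweep of the closed orbit, not literally the ``$\C^{\times}$-fixed locus''). But the proposal has a genuine gap exactly where you flag ``the main obstacle'': the identification of the two local models compatibly with the period map \emph{is} the entire content of the theorem, and you do not carry it out --- you only assert that it should follow from an analysis of the limiting mixed Hodge structure of the hypergeometric local system. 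As written, the argument establishes nothing beyond the already-known fact that both compactifications are modifications of $M_w^{\G}\cong X_w^{\BB}$ over the same finite set of points; two such modifications need not agree (indeed, for general Deligne--Mostow pairs $(w,S)$ they do not --- that is the main point of this paper), so the local matching cannot be waved through.

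Moreover, your claim that a limiting-MHS computation ``seems unavoidable'' is contradicted by the proof actually given here (the theorem is the $|S|=1$ case of Theorem \ref{thm:main_extendability}), which requires no computation on the toroidal side and no asymptotics of the period map. The route is: (i) a Luna-slice computation (Proposition \ref{prop:transversal or non-transversal}) showing that the strict transform of the discriminant together with the Kirwan exceptional divisors is normal crossing up to finite quotients --- this is condition $(\mathrm{T})$, which holds automatically for a singleton $S$ since no $T_1\subset S$ with $|T_1|\geq 3$ exists; (ii) the Borel extension theorem combined with the AMRT/Namikawa lifting criterion (Theorem \ref{thm:Borel extension}), which is vacuous for ball quotients because every fan is a single ray, so the period map extends to a \emph{morphism} $M_w^{\K}\to X_w^{\T}$; (iii) Lemma \ref{lem:no_morphism}: since $X_w^{\T}$ is $\Q$-factorial and the extended morphism takes the irreducible Kirwan exceptional divisor over each polystable point onto the corresponding irreducible toroidal boundary divisor, it contracts no divisor, cannot be a small contraction, and is therefore an isomorphism. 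This machinery replaces the germ-matching entirely. If you wish to complete your proposal as stated, the burden is to actually compute the monodromy/lattice data at each cusp and the slice representation, and to exhibit the isomorphism of germs intertwined by the period map --- a substantial computation that the proposal leaves entirely open.
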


In this formulation of the theorem we used the fact that the moduli spaces of weighted pointed curves, introduced in \cite{Has03}, and the Kirwan blow-up coincide.
This follows from \cite[Theorem 1.1]{KM11}; see also \cite[Remark 2.11]{GKS21}.
In this paper, we shall prove that an analogue for the spaces $M_{w,S}^{\K}$ and $X_{w,S}^{\BB}$ of Theorem \ref{thm:GKS21}  does \textit{not} hold for all Deligne-Mostow varieties listed 
in \cite[Tables 2, 3]{GKS21}, and we shall further give a criterion when the compactifications are naturally isomorphic.

Now, for a pair $(w,S) \in A_{\mathrm{DM}}$, let us introduce a numerical condition.
The proposition (T) is defined as the \textit{negation} of the following condition:
\begin{equation}\label{equ:NT}
\mathrm{There\ exist\ two\ sets\ }T_1\subset S,\ T_2\subset S^{c}\ \mathrm{so\ that}\ |T_1|\geq 3\ \mathrm{and}\ \sum_{i\in T_1\sqcup T_2}w_i = 1.
\end{equation}
Here the notation $(\mathrm{T})$  stands for {\em transversal}. The reason for this name will become clear in Section \ref{sec:mainresults}. 

\begin{thm}[{Theorem \ref{thm:main_extendability}}]
\label{mainthm:main_extendability}
    The (Deligne-)Mostow isomorphism \eqref{mor:unordered_period_map}
    \[M_{w,S}\hookrightarrow X_{w,S}\defeq\B^{n-3}/\Gamma_{w,S}\]
    extends to an isomorphism between $M^{\K}_{w,S}$ and $X_{w,S}^{\T}$ if and only if  $(\mathrm{T})$ holds.
Otherwise, neither the isomorphism given by \eqref{isom:compactified_sigma_int}, nor its inverse lift to $M^{\K}_{w,S}$ or $X_{w,S}^{\T}$ respectively. In particular, the Kirwan blow-up and the toroidal compactification are not naturally isomorphic in this case.
\end{thm}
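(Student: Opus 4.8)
The plan is to reduce the assertion to a local analysis at the cusps and to pin down the effect of the symmetric group $S[w]$ one cusp at a time. Since the period map \eqref{mor:unordered_period_map} is already an isomorphism over the interior $M_{w,S}$, the two compactifications $M^{\K}_{w,S}$ and $X^{\T}_{w,S}$ differ from the common space $M^{\G}_{w,S}\cong X^{\BB}_{w,S}$ of \eqref{isom:compactified_sigma_int} only over the finitely many polystable points, which under the Deligne--Mostow isomorphism are precisely the cusps. These are indexed by the weight-one partitions $\mathbb{N}_n=I\sqcup I^{c}$, $\sum_{i\in I}w_i=1$, modulo $S[w]$. Fixing such a cusp $c_I$, the question becomes whether the birational map $M^{\K}_{w,S}\dashrightarrow X^{\T}_{w,S}$ extending \eqref{isom:compactified_sigma_int} is a local isomorphism near $c_I$.

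Next I would record the two local pictures. On the GIT side the slice to the polystable orbit is $W=V_{+}\oplus V_{-}$, where $V_{\pm}$ records the relative positions of the two clusters and the Levi torus $\C^{\times}$ acts with weights $\pm1$; the coincident $S$-points in each cluster yield a finite stabiliser $\Sigma_{|S\cap I|}\times\Sigma_{|S\cap I^{c}|}$. Thus locally $M^{\K}_{w,S}=\bigl(\mathrm{Bl}_{0}W/\!/\C^{\times}\bigr)/\bigl(\Sigma_{|S\cap I|}\times\Sigma_{|S\cap I^{c}|}\bigr)$, with exceptional divisor the finite-group quotient of $\P(V_{+})\times\P(V_{-})$. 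On the ball-quotient side the local model is governed by the parabolic $\Gamma_{w,S,\infty}$; the new feature relative to $\Gamma_{w}$ is the presence of the reflections attached to the mirrors $H_{ij}=\{z_i=z_j\}$ with $i,j\in S\cap I$, which is exactly the information carried by the $\tfrac12\Z$-part of the $\Sigma$INT condition. Theorem \ref{thm:GKS21} identifies the two pictures $S[w]$-equivariantly in the ordered case, matching $\P(V_{+})\times\P(V_{-})$ with the toroidal boundary divisor; so the entire discrepancy is concentrated in how the finite group and these reflections act near $c_I$.

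This is where \eqref{equ:NT} enters and the name \emph{transversal} becomes transparent. The mirrors $H_{ij}$ with $i,j\in S\cap I$ are the hyperplanes of the braid (type $A_{|S\cap I|-1}$) arrangement inside $V_{+}$, and together with the corresponding mirrors in $V_{-}$ they form the branch locus of the quotient $M^{\G}_{w}\to M^{\G}_{w,S}$ through $c_I$. Condition (T) holds at $c_I$ exactly when $|S\cap I|\le 2$ and $|S\cap I^{c}|\le 2$, i.e. when at most one mirror passes through $c_I$ on each side, so that the arrangement is a simple normal crossing along the boundary. In that transversal case the branch divisor meets the exceptional/toroidal boundary cleanly: each finite stabiliser acts through at most a single reflection, the projectivised standard representation $\P(\mathrm{std}_{|S\cap I|})$ degenerates to a point, and the only residual difference between the two local models sits in the normal direction, which the ordered comparison of Theorem \ref{thm:GKS21} already reconciles. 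Running this over all cusps, the $S[w]$-equivariant GKS isomorphism descends and yields $M^{\K}_{w,S}\cong X^{\T}_{w,S}$, proving the implication ``(T) $\Rightarrow$ isomorphism''.

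The substantial direction is the converse. If \eqref{equ:NT} holds, choose the cusp $c_I$ with $|S\cap I|=|T_1|\ge 3$; then $\binom{|T_1|}{2}\ge 3$ mirrors $H_{ij}$ pass through $c_I$, and the braid arrangement is visibly \emph{not} a normal crossing there, so the branch locus fails to be transversal to the boundary. On the GIT side the exceptional divisor involves the quotient $\P(V_{+})/\Sigma_{|T_1|}$ by the full permutation group, whereas the canonical toroidal boundary of $X^{\T}_{w,S}$ is prescribed by the cusp lattice of $\Gamma_{w,S}$ and does \emph{not} coincide with it; the two resolutions of $c_I$ are therefore related by a nontrivial small modification rather than a morphism. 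I expect the technical heart of the proof to be making this flip explicit: one computes the two local graded invariant rings (equivalently the two fans) and exhibits a curve contracted in one model while a different curve is extracted in the other, so that neither \eqref{isom:compactified_sigma_int} nor its inverse lifts. The failure of transversality for three or more coincident $S$-points — the non-normal-crossing of the braid mirrors at $c_I$ — is precisely the obstruction, giving the ``only if'' direction.
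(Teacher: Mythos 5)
Your overall intuition---that everything is concentrated at the cusps, and that the obstruction is the non-normal-crossing of the braid mirrors when $|S\cap I|\geq 3$---matches the spirit of the paper, but your proof rests on a local identification that is unjustified, in fact false in general, and moreover assigned to the wrong side. You claim that locally $M^{\K}_{w,S}=\bigl(\mathrm{Bl}_{0}W/\!/\C^{\times}\bigr)/\bigl(\Sigma_{|S\cap I|}\times\Sigma_{|S\cap I^{c}|}\bigr)$. Blow-ups do not commute with finite quotients: by Kirwan's Lemma 3.11, the quotient of the blow-up is the blow-up of the \emph{invariant} ideal $(\mathcal{I}^{r})^{G}$, which in the presence of the symmetric group action is a weighted, not standard, blow-up (already for one transposition on $\C^2$, $\mathrm{Bl}_0(\C^2)/\Sigma_2$ acquires an $A_1$ point while $\mathrm{Bl}_0(\C^2/\Sigma_2)$ is smooth). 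The space that \emph{is} locally the finite quotient of the ordered model is the toroidal compactification: $X^{\T}_{w,S}\cong X^{\T}_{w}/S[w]\cong M^{\K}_{w}/S[w]$, by functoriality of toroidal compactifications plus the GKS theorem upstairs. By contrast, $M^{\K}_{w,S}$ is by definition the blow-up of the polystable point in the \emph{symmetrized} Luna slice, i.e.\ in coefficient coordinates $b_1,\dots,b_{|T_1|-1}$ where the braid arrangement has already become the hypersurface $\mathrm{disc}(X^{|T_1|}+b_1X^{|T_1|-2}+\cdots+b_{|T_1|-1})=0$. So your two local models are exactly swapped, and the question of whether they agree---equivalently whether $M^{\K}_{w}/S[w]\cong M^{\K}_{w,S}$---is precisely the content of the theorem (it holds if and only if $(\mathrm{T})$ does, by the paper's corollary). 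Assuming this identification as an input is circular.

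This gap infects both directions. For ``$(\mathrm{T})\Rightarrow$ isomorphism'', descending the $S[w]$-equivariant GKS isomorphism only produces $M^{\K}_{w}/S[w]\cong X^{\T}_{w,S}$; the remaining step, that the left side equals $M^{\K}_{w,S}$ when $(\mathrm{T})$ holds, is the actual work, and ``at most one reflection'' does not deliver it by itself. The paper instead computes in the symmetrized slice that the strict transform of the discriminant meets the Kirwan exceptional divisor transversally (up to finite quotients) when $(\mathrm{T})$ holds, applies the Borel extension theorem (trivial fan data for ball quotients) to extend the period map to a morphism $M^{\K}_{w,S}\to X^{\T}_{w,S}$, and then upgrades it to an isomorphism via a rigidity lemma: both spaces are $\Q$-factorial, so a birational morphism matching the irreducible boundary divisors cannot contract anything (Lemma \ref{lem:no_morphism}). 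For the converse you explicitly defer the key step (``I expect the technical heart of the proof to be making this flip explicit''), so nothing is proved there; and the flip picture is neither established nor needed. The paper's argument is: when $(\mathrm{T})$ fails, $\mathrm{disc}(X^{|T_1|}+\cdots)$ with $|T_1|\geq 3$ has strict transform generically \emph{non}-transversal to the Kirwan exceptional divisor, whereas on the toroidal side boundary and discriminant always meet generically transversally (via the Deligne--Mumford compactification); this rules out a natural isomorphism, and then the same $\Q$-factoriality lemma shows that neither the period map nor its inverse can even extend to a morphism, since any such extension would have to be an isomorphism.
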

Taking $S$ to be a one-point set, Theorem \ref{mainthm:main_extendability} becomes Theorem \ref{thm:GKS21}.

\begin{rem}
Some cases are known where analogous theorems hold: the moduli spaces of cubic surfaces \cite{CMGHL23}, 8 points in $\P^1$ \cite{HM25} and 12 points in $\P^1$ \cite{HKM24}.
Our result is based on and generalizes the results by \cite[Theorem 1.1]{HM25} and \cite[Theorem 1.4 (2)]{HKM24}.
\end{rem}
Theorem \ref{mainthm:main_extendability} is closely related to the question whether taking  Kirwan blow-ups is compatible with taking finite quotients. In the case of ball quotients, toroidal compactifications behave functorially
under taking finite quotients. This follows from the general theory of toroidal compactifications, see also \cite[Theorem 1.1]{GKS21}. The question is more subtle for Kirwan blow-ups. We shall discuss this in detail
in Subsection \ref{subsec:functKirwan}. 
\begin{cor}
Let $(w,S)\in A_{\mathrm{DM}}$ be a Deligne-Mostow pair.
    Then the $S[w]$-quotient $M_w^{\K}/S[w]$ of the Kirwan blow-up is isomorphic to the Kirwan blow-up $M_{w,S}^{\K}$ of $M_{w,S}^{\G} \cong M_w/S[w]$ if and only if $\mathrm{(T)}$ holds.
    In particular, if $|S| = 1, 2$, then we always have an isomorphism $M_w^{\K}/S[w] \cong M_{w,S}^{\K}$.
\end{cor}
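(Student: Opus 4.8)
The plan is to derive the corollary from Theorem \ref{mainthm:main_extendability} by arranging all of the relevant spaces as compactifications of $M_{w,S}$ lying over the common base $X_{w,S}^{\BB} = M_{w,S}^{\G}$, and then comparing them through $X_{w,S}^{\T}$. The one input needed beyond the main theorem is the good behaviour of the toroidal compactification under the finite quotient by $S[w]$; the whole point of the statement is that the Kirwan blow-up need not share this property.

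First I would record the natural isomorphism $M_w^{\K}/S[w] \cong X_{w,S}^{\T}$, valid for every $(w,S)\in A_{\mathrm{DM}}$ regardless of $(\mathrm{T})$. By Theorem \ref{thm:GKS21} the period map induces an isomorphism $M_w^{\K}\stackrel{\sim}{\longrightarrow}X_w^{\T}$ in the ordered (INT) case, and this isomorphism is equivariant for the action of $S[w]$, since $S[w]=\Sigma_{|S|}$ acts on both sides compatibly with the period map by permuting the marked points in $S$. Passing to quotients gives $M_w^{\K}/S[w] \cong X_w^{\T}/S[w]$. Since $X_{w,S}=X_w/S[w]$ and toroidal compactifications of ball quotients are functorial under finite quotients (as recalled just before the corollary, using the general theory together with \cite[Theorem 1.1]{GKS21}), the right-hand side is canonically $X_{w,S}^{\T}$. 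All of these identifications restrict to the identity on the open dense locus $M_{w,S}$ and are compatible with the contractions to $X_{w,S}^{\BB}$, so we obtain a natural isomorphism $M_w^{\K}/S[w] \cong X_{w,S}^{\T}$ extending the period map.

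Next I would invoke Theorem \ref{mainthm:main_extendability}, which asserts that the period map extends to a natural isomorphism $M_{w,S}^{\K} \cong X_{w,S}^{\T}$ precisely when $(\mathrm{T})$ holds, and that when $(\mathrm{T})$ fails neither the isomorphism \eqref{isom:compactified_sigma_int} nor its inverse lifts to these compactifications. Chaining this with the previous step proves both directions: if $(\mathrm{T})$ holds then $M_w^{\K}/S[w] \cong X_{w,S}^{\T} \cong M_{w,S}^{\K}$, all over $X_{w,S}^{\BB}$ and extending the identity on $M_{w,S}$; conversely, a natural isomorphism $M_w^{\K}/S[w] \cong M_{w,S}^{\K}$, composed with the canonical $M_w^{\K}/S[w] \cong X_{w,S}^{\T}$, would furnish exactly the lift of the period map to an isomorphism $M_{w,S}^{\K} \cong X_{w,S}^{\T}$ that Theorem \ref{mainthm:main_extendability} excludes unless $(\mathrm{T})$ holds. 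This is the step demanding the most care, and the main (if modest) obstacle: one must take all comparison isomorphisms over the base $X_{w,S}^{\BB}$ and require them to extend the open immersion of $M_{w,S}$, so that the phrase ``isomorphic'' in the corollary genuinely reduces to the natural-isomorphism assertion of the main theorem rather than to a merely abstract identification of varieties.

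Finally, the ``in particular'' clause is immediate from the shape of condition \eqref{equ:NT}: its negation $(\mathrm{T})$ can fail only if there is a subset $T_1\subseteq S$ with $|T_1|\geq 3$. When $|S|=1$ or $|S|=2$ no such $T_1$ exists, so $(\mathrm{T})$ holds automatically and the isomorphism $M_w^{\K}/S[w] \cong M_{w,S}^{\K}$ always exists; for $|S|=1$ this is in any case trivial, as then $S[w]=\Sigma_1$ is the trivial group and $M_w^{\K}/S[w]=M_w^{\K}=M_{w,S}^{\K}$.
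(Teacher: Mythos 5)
Your proposal is correct and follows essentially the same route as the paper: the paper also identifies $M_w^{\K}/S[w]$ with $X_{w,S}^{\T}$ via Theorem \ref{thm:GKS21} together with the functoriality of toroidal compactifications under finite quotients (this identification is stated explicitly in the proof of Proposition \ref{prop:transversal or non-transversal}(1) and in Remark \ref{rem:finitequotientsK}), and then concludes by Theorem \ref{mainthm:main_extendability}. Your care in keeping all identifications over $X_{w,S}^{\BB}$ and extending the period map, so that the converse direction genuinely contradicts the main theorem when $(\mathrm{T})$ fails, matches the paper's intended reading of ``isomorphic.''
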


We shall now explain how this relates to previous results.
First, the problem of whether the Kirwan blow-up and the toroidal compactification are isomorphic or not has been treated in several previous studies. This is summarized in Table \ref{tab:previous studies on isom}.
\renewcommand{\arraystretch}{1.5}
\begin{table}[h]
  \centering
  \caption{Previous studies on the classification of compactifications.}
   \label{tab:previous studies on isom}
\begin{tabular}{|c|c|c|c|c|}
   \hline
   Number fields & \multicolumn{2}{|c|}{Gaussian} &  \multicolumn{2}{|c|}{Eisenstein} \\ \hline
   Isomorphic ? &$M^{\K}_{w,S} \cong X_{w,S}^{\T}$ & $M^{\K}_{w,S} \not\cong X_{w,S}^{\T}$ & $M^{\K}_{w,S} \cong X_{w,S}^{\T}$ & $M^{\K}_{w,S} \not\cong X_{w,S}^{\T}$ \\ \hline
   $\#$ Cases & 6 INT cases  & $(w_{\G}, \mathbb{N}_8)$  & 7 INT cases  & $(w_{\mathbb{E}}, \mathbb{N}_{12})$\\ \hline
      References & \cite{GKS21} & \cite{HM25} & \cite{GKS21} & \cite{HKM24}\\ \hline
 \end{tabular}
\end{table}

Our aim is to study the universe $A_{\mathrm{DM}}$. A direct calculation shows that $A_{\mathrm{DM}}$ contains 85 elements, see Tables \ref{table:DM Gaussian}, \ref{table:DM Eisenstein}.
Theorem \ref{mainthm:main_extendability} states for which of these cases the Kirwan blow-up and the toroidal compactification are naturally isomorphic. One of the main results of this paper is 
that one can define a partial order $\prec$ which provides much insight into the structure of the universe $A_{\mathrm{DM}}$ and is very helpful in analyzing the behaviour of the compactifications. This goes back to Doran's work  \cite{Dor04, DDH18}. 
Consider the weights 
\begin{align*}
w_{\G} &\defeq \left(\frac{1}{4},\frac{1}{4},\frac{1}{4},\frac{1}{4},\frac{1}{4},\frac{1}{4},\frac{1}{4},\frac{1}{4}\right)\\
w_{\mathrm{E}} &\defeq \left(\frac{1}{6},\frac{1}{6},\frac{1}{6},\frac{1}{6},\frac{1}{6},\frac{1}{6},\frac{1}{6},\frac{1}{6},\frac{1}{6},\frac{1}{6},\frac{1}{6},\frac{1}{6}\right).
\end{align*}
In the first case condition INT is satisfied; in the second case it is not, but $\Sigma$INT-$S$ holds for the set $S=\mathbb{N}_{12}$.
These weights give rise to the Deligne-Mostow varieties $M_{w_{\mathrm{G}}}^{\G}$ and $M_{w_{\mathrm{E}, \mathbb{N}_{12}}}^{\G}$.
Doran \cite{Dor04, DDH18} showed that all Deligne-Mostow varieties $M_{w,S}^{\G}$ have a finite morphism to a sub-ball quotient of either $M_{w_{\mathrm{G}}}^{\G}$ or $M_{w_{\mathrm{E}, \mathbb{N}_{12}}}^{\G}$.
For this reason he called these varieties the Gaussian and the Eisenstein \textit{ancestral} Deligne-Mostow varieties.
We refer to Section \ref{sec:The moduli spaces of partly ordered points} for more details.

We extend the notion of ancestral varieties by introducing a partial order $\prec$, see Definition \ref{defn:weights}. 
Proposition \ref{prop:partial order}
shows that for two pairs $(w,S), (w,S') \in A_{\mathrm{DM}}$ the relation $(w,S)\prec (w,S')$ (Definition \ref{defn:weights}) is equivalent to the existence of a closed immersion of the corresponding GIT spaces
\[M_{w,S}^{\G} \hookrightarrow M_{w',S'}^{\G}.\]
The ancestral varieties $(w_{\G}, \mathbb{N}_1)$ and $(w_{\mathrm{E}}, \mathbb{N}_{12})$ are  maximal elements  in $A_{\mathrm{DM}}$.
We shall see, however, that these are not the only maximal elements with respect to $\prec$  (see Section \ref{subsec:Extremal elements}).
Previous studies, such as \cite{GKS21}, have reduced problems on Deligne-Mostow varieties to the ancestral cases. Here we shall generalize this as
\begin{thm}[{Reduction Method  \ref{redmethodsub}}]
It is enough to prove 
Theorem \ref{mainthm:main_extendability} for minimal and maximal elements (with respect to the partial order $\prec$) in $A_{\mathrm{DM}}$. 
\end{thm}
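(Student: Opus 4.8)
The plan is to show that the equivalence in Theorem~\ref{mainthm:main_extendability} propagates along the partial order $\prec$, so that its two implications are pinned down by the extremal elements of $A_{\mathrm{DM}}$. The starting point is Proposition~\ref{prop:partial order}, which identifies $(w,S)\prec(w',S')$ with a closed immersion $M_{w,S}^{\G}\hookrightarrow M_{w',S'}^{\G}$ of the GIT models; via Theorem~\ref{thm:DM_BB} this becomes an inclusion $X_{w,S}^{\BB}\hookrightarrow X_{w',S'}^{\BB}$ realising the smaller ball quotient as a sub-ball quotient of the larger one, under which $M_{w,S}^{\G}$ sits inside $M_{w',S'}^{\G}$ as (the closure of) a collision stratum. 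Since the Deligne--Mostow map is already an open immersion with discriminant complement, the comparison $M_{w,S}^{\K}\cong X_{w,S}^{\T}$ is a purely \emph{local} question at the boundary points: it holds if and only if every polystable centre of the Kirwan blow-up matches the corresponding toroidal cusp, and the local model at such a point depends only on the combinatorial type of the degenerating cluster, i.e.\ on a subset $I$ with $\sum_{i\in I}w_i=1$ together with the induced symmetry on $I$. The whole argument is organised around the principle that these cluster types are governed by extremal elements.

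The first step is the combinatorial behaviour of condition $(\mathrm{T})$ along $\prec$. The witness of its negation \eqref{equ:NT} is a pair $T_1\subset S$, $T_2\subset S^{c}$ with $|T_1|\geq 3$ and $\sum_{i\in T_1\sqcup T_2}w_i=1$. I would prove two lemmas: first, that splitting a collided point (moving upward in $\prec$) refines a weight into a sum and can only enlarge such a cluster while preserving $|T_1|\geq 3$, so that $(\mathrm{NT})$ is inherited by $\prec$-larger elements and $(\mathrm{T})$ by $\prec$-smaller ones; and second, that any $(w,S)$ satisfying $(\mathrm{NT})$ dominates a \emph{minimal} element still satisfying $(\mathrm{NT})$, obtained by collapsing all points outside a fixed witnessing cluster $T_1\sqcup T_2$ (this keeps $\sum_{i\in I}w_i=1$ and $|T_1|\geq 3$ intact, and one checks the collapsed datum remains a Deligne--Mostow pair). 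Dually, using the ancestral property underlying $\prec$, every admissible cluster type occurs at a boundary point of some \emph{maximal} element, so that the maximal elements exhaust the list of local models while the minimal elements isolate the ``pure'' bad clusters.

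The geometric core is the transfer of the isomorphism along a closed immersion. On the toroidal side this is automatic: toroidal compactifications of ball quotients are functorial for sub-ball quotients (as used already in Theorem~\ref{thm:GKS21}), so $X_{w,S}^{\T}$ is canonically induced from $X_{w',S'}^{\T}$ along the stratum. The subtle side, and the main obstacle, is the Kirwan blow-up, whose formation does \emph{not} commute with passage to sub-GIT-quotients in general; this is exactly the phenomenon examined in Subsection~\ref{subsec:functKirwan}. Here I would argue locally, showing that under the closed immersion a polystable centre for $(w,S)$ maps to a polystable centre for $(w',S')$ with the \emph{same} stabiliser type, so that the local model of $M^{\K}$ near that point—and hence its comparison with the toroidal boundary—is shared by $(w,S)$ and $(w',S')$ along the corresponding strata. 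Establishing this preservation of centre and stabiliser type is the heart of the proof; granting it, the local isomorphism (resp.\ its failure) at a boundary point is an intrinsic property of the cluster type and can therefore be read off from any extremal element exhibiting that cluster.

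It remains to assemble the two implications. If $(w,S)$ satisfies $(\mathrm{NT})$, the minimisation lemma produces a minimal $(w_0,S_0)\preceq(w,S)$ with $(\mathrm{NT})$; Theorem~\ref{mainthm:main_extendability} for minimal elements gives that neither \eqref{isom:compactified_sigma_int} nor its inverse lifts for $(w_0,S_0)$, and since this failure is local at a bad cluster shared with $(w,S)$, it obstructs the lift for $(w,S)$ as well. If instead $(w,S)$ satisfies $(\mathrm{T})$, every boundary cluster has $|T_1|\leq 2$ in $S$, hence an abelian local model; each such good cluster already appears at a boundary point of a maximal element for which Theorem~\ref{mainthm:main_extendability} (in the INT/ancestral cases this is Theorem~\ref{thm:GKS21}) yields $M^{\K}\cong X^{\T}$ locally, so by the shared-local-model principle $M_{w,S}^{\K}\to X_{w,S}^{\T}$ is an isomorphism near every boundary point and, together with the open Deligne--Mostow isomorphism, globally. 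Combining the two cases gives the full equivalence for arbitrary $(w,S)$, so that verifying Theorem~\ref{mainthm:main_extendability} for minimal elements (supplying the obstructions) and maximal elements (supplying the good local models) suffices. The delicate point throughout is the local control of the Kirwan centres along $\prec$; the toroidal functoriality and the combinatorics of $(\mathrm{T})$ are comparatively routine.
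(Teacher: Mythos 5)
Your overall skeleton---propagate failure of the extension property upward from minimal elements and success downward from maximal ones, using the closed immersions of Proposition \ref{prop:partial order} together with functoriality of both compactifications---is the same as the paper's. But your transfer mechanism is local, and its key step is exactly where the gap lies. The paper's proof (Theorem \ref{thm:SB_method}) performs no local analysis at all: it combines the functoriality of the Kirwan blow-up (Proposition \ref{prop:functoriallity_kirwan}, proved via Kirwan's identification of the blow-up with $\mathrm{Bl}_{(\mathcal{I}_{Z}^r)^{G}}$ of the GIT quotient and the universal property of blow-ups) with the Yu--Zheng extension lemma (Lemma \ref{lem:YZ}): the two finite morphisms $M_{w',S'}^{\K}\to Y$ and $X_{w',S'}^{\T}\to Y$ into $Y\defeq M_{w,S}^{\K}\cong X_{w,S}^{\T}$ agree on the dense opens $U_{w',S'}\cong V_{w',S'}$, hence the Deligne--Mostow isomorphism extends to an isomorphism $M_{w',S'}^{\K}\cong X_{w',S'}^{\T}$; the statement about failure propagating upward is the contrapositive. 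By contrast, your ``shared-local-model principle''---that corresponding polystable centres of the smaller and larger pair have the same local model because they have ``the same stabiliser type''---is asserted but not proved, and it is not literally true as stated: the Luna slices have different dimensions ($\C^{n-2}$ versus $\C^{n'-2}$), so the local model of the smaller pair is only a slice of the larger one, and whether (non)transversality, or the (non)existence of a local isomorphism of the two blow-ups, passes between a slice and its ambient space is precisely what would have to be proved. (Moreover, stabiliser type is not automatic along $\prec$: Lemma \ref{lem:stab_sspoints} gives $\C^{\times}\rtimes\Sigma_2$ in three exceptional cases and $\C^{\times}$ otherwise, so one must check that the exceptional pairs are incomparable to the others.)

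There are two further gaps. First, your local-to-global assembly presupposes that the birational map $M_{w,S}^{\K}\dashrightarrow X_{w,S}^{\T}$ is already a morphism near every boundary point (you speak of it being ``an isomorphism near every boundary point''); producing this morphism is part of what must be proved, and in the paper it comes either from the Borel extension theorem plus transversality (in the direct proof of Theorem \ref{mainthm:main_extendability}) or is bypassed entirely by Lemma \ref{lem:YZ} (in the reduction). Second, and more structurally: your combinatorial lemma only establishes that the negation of $(\mathrm{T})$ ascends along $\prec$ (equivalently, $(\mathrm{T})$ descends), but the reduction to maximal elements needs to control the $(\mathrm{T})$-status of a maximal element lying \emph{above} a given $(\mathrm{T})$-pair, i.e.\ one needs $(\mathrm{T})$ to ascend as well; this is what Proposition \ref{prop:partial order}(1) supplies in the paper. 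You sidestep this by invoking, at boundary points of maximal elements, the claim that the isomorphism holds ``locally''---but that is strictly finer information than the statement of Theorem \ref{mainthm:main_extendability} for those elements. As written, your argument therefore does not reduce the theorem to its validity at extremal elements; it reduces it to unproven local statements about them, which defeats the purpose of the Reduction Method.
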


Table \ref{tab:our work on isom} contains a summary of our results.

\renewcommand{\arraystretch}{1.5}
\begin{table}[h]
  \centering
  \caption{Results of this paper. The number in the last row is the number of maximal (resp. minimal) elements with respect to the partial order $\prec$ where (T) holds (resp. does not hold). See Tables \ref{table:DM Gaussian}, \ref{table:DM Eisenstein} in detail.}
   \label{tab:our work on isom}
\begin{tabular}{|c|c|c|c|c|c|}
   \hline
   Number fields & \multicolumn{2}{|c|}{Gaussian} &  \multicolumn{2}{|c|}{Eisenstein} & Total \\ \hline
   Isomorphic? &$M^{\K}_{w,S} \cong X_{w,S}^{\T}$ & $M^{\K}_{w,S} \not\cong X_{w,S}^{\T}$ & $M^{\K}_{w,S} \cong X_{w,S}^{\T}$ & $M^{\K}_{w,S} \not\cong X_{w,S}^{\T}$ & - \\ \hline
      $\#$ Cases  & 16  & 15  & 24  & 30 & 85 \\ \hline
      $\#$  Extremal elements & 2 & 5  & 6  & 21 & 34 \\ \hline
 \end{tabular}
\end{table}

Now, Theorem \ref{mainthm:main_extendability} is, in view of the theory of period maps,  not only interesting in itself, but also has applications in the log minimal model program (LMMP).
Let $\Delta_{w,S}^{\B
B}$ be the boundary divisor $X_{w,S}^{\BB}\setminus M_{w,S}$ with standard coefficients given by the ramification order of the uniformization map $\B^{n-3}\to X_{w,S}$.
Let $\Delta_{w,S}^{\T}$ be the sum of the strict transform of $\Delta_{w,S}^{\BB}$ via the toroidal blow-up $X_{w,S}^{\T}\to X_{w,S}^{\BB}$ and the toroidal boundary with coefficient 1.
Further, let $\Delta_{w,S}^{\K}$ be the sum of the strict transform of $\Delta_{w,S}^{\BB}$ with the above coefficients and the exceptional divisor via the blow-up $M_{w,S}^{\K}\to M_{w,S}^{\mathrm{G}}$. 
See Subsection \ref{subsec:SB method} for the precise description of the discriminant divisors.

\begin{thm}[{\cite[Theorem 5.3]{HKM24}}]
\label{thm:BB_Mumford}
Suppose $(\mathrm{T})$ holds so that the Kirwan blow-up $M_{w,S}^{\K}$ is isomorphic to the toroidal compactification $X_{w,S}^{\tor}$.
Then the following holds in terms of LMMP:
    \begin{enumerate}
    \item The blow-up $(M_{w,S}^{\K}, \Delta_{w,S}^{\K}) \to (X_{w,S}^{\BB}, \Delta_{w,S}^{\BB})$ is log-crepant.
        \item $(X_{w,S}^{\BB}, \Delta_{w,S}^{\BB})$ is the log canonical model and a minimal model of itself.
        \item $(M_{w,S}^{\K}, \Delta_{w,S}^{\K})$ is a log minimal model of itself.
    \end{enumerate}
\end{thm}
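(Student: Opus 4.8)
The plan is to deduce all three statements from a single geometric input: the proportionality between the log canonical divisor and the natural automorphic (Hodge) line bundle $\lambda$ on \emph{both} compactifications, together with the ampleness of $\lambda$ on the Baily--Borel compactification. Write $u\colon\B^{n-3}\to X_{w,S}$ for the uniformization and $\lambda^{\BB}$, $\lambda^{\T}$ for the extensions of the automorphic line bundle to $X_{w,S}^{\BB}$ and $X_{w,S}^{\T}$. First I would recall the orbifold canonical bundle formula on the open ball quotient: since $\Delta^{\BB}$ carries precisely the standard coefficients $1-1/e_i$ attached to the ramification of $u$, one has $u^{*}(K+\Delta)=K_{\B^{n-3}}$, and Hirzebruch--Mumford proportionality on $\B^{n-3}$ yields a positive constant $c$ (equal to $n-2$) with $K+\Delta\sim_{\Q}c\,\lambda$ over $M_{w,S}$. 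Because the cusps have codimension $n-3\geq 2$, this relation extends across them, giving
\[
K_{X_{w,S}^{\BB}}+\Delta_{w,S}^{\BB}\sim_{\Q}c\,\lambda^{\BB}.
\]

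The decisive second input is the analogous relation on the toroidal side, with the toroidal boundary entering at coefficient exactly $1$:
\[
K_{X_{w,S}^{\T}}+\Delta_{w,S}^{\T}\sim_{\Q}c\,\lambda^{\T},\qquad \lambda^{\T}=\pi^{*}\lambda^{\BB},
\]
where $\pi\colon X_{w,S}^{\T}\to X_{w,S}^{\BB}$ is the toroidal contraction. Granting this, statement (1) is immediate: the two displayed relations give $K_{X_{w,S}^{\T}}+\Delta_{w,S}^{\T}\sim_{\Q}\pi^{*}(K_{X_{w,S}^{\BB}}+\Delta_{w,S}^{\BB})$, which is log-crepancy. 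For statement (2), I would use that $\lambda^{\BB}$ is ample, since $X_{w,S}^{\BB}$ is the $\mathrm{Proj}$ of the ring of automorphic forms; hence $K+\Delta$ is ample on $X_{w,S}^{\BB}$. Log-canonicity follows from crepancy together with the fact that $X_{w,S}^{\T}$ has at worst finite quotient singularities and $\Delta_{w,S}^{\T}$ is an (orbifold) normal-crossing divisor of coefficients $\leq 1$, so $(X_{w,S}^{\T},\Delta_{w,S}^{\T})$ is log canonical; being a crepant model, it forces $(X_{w,S}^{\BB},\Delta_{w,S}^{\BB})$ to be log canonical as well, and ampleness then identifies the latter as the log canonical model, which in particular is a minimal model of itself. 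Statement (3) follows because $\lambda^{\T}=\pi^{*}\lambda^{\BB}$ is nef, being the pullback of an ample class, so $K_{X_{w,S}^{\T}}+\Delta_{w,S}^{\T}$ is nef; as the pair is moreover divisorially log terminal, it is a log minimal model of itself.

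To transfer everything to the Kirwan side I would invoke the hypothesis $(\mathrm{T})$: by Theorem \ref{mainthm:main_extendability} it yields an isomorphism $M_{w,S}^{\K}\stackrel{\sim}{\longrightarrow}X_{w,S}^{\T}$ under which the Kirwan exceptional divisor corresponds to the toroidal boundary. Since both enter their respective boundaries at coefficient $1$ while the horizontal parts are matched by strict transform, this isomorphism identifies the pairs $(M_{w,S}^{\K},\Delta_{w,S}^{\K})$ and $(X_{w,S}^{\T},\Delta_{w,S}^{\T})$. The three assertions for $M_{w,S}^{\K}$ are then literally the three assertions just established for $X_{w,S}^{\T}$.

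The main obstacle is the coefficient-$1$ relation $K_{X_{w,S}^{\T}}+\Delta_{w,S}^{\T}\sim_{\Q}c\,\lambda^{\T}$ along the toroidal boundary. This is the heart of the argument and requires the local analysis of the cusps: one must show that $\lambda$ extends across the toroidal boundary and that the log canonical extension attaches coefficient exactly $1$ there. Concretely, near a cusp the ball quotient is a punctured-disc bundle over an abelian variety, the toroidal filling adds its zero section, and an explicit Fourier--Jacobi (local-coordinate) computation, as in Mumford's theory of toroidal compactifications specialized to the rank-one ball case, pins down the discrepancy, the consistency check being that adjunction along the boundary abelian variety matches its trivial canonical bundle. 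Once this local statement is isolated as a lemma, all global conclusions follow formally as above.
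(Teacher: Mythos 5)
Note first that the paper itself gives no proof of this statement: it is imported verbatim from \cite{HKM24} (Theorem 5.3 there), and the hypothesis $(\mathrm{T})$ enters only through Theorem \ref{mainthm:main_extendability}, which supplies the isomorphism $M_{w,S}^{\K}\cong X_{w,S}^{\T}$. So there is no internal proof to compare against; what can be compared is your argument with the cited one and with the framework the paper deploys in Section \ref{section:applications}. Your proposal is sound and is essentially the standard argument (and the one underlying the cited reference): proportionality $K+\Delta\sim_{\Q}c\,\lambda$ on both $X_{w,S}^{\BB}$ and $X_{w,S}^{\T}$, with the toroidal boundary entering at coefficient exactly one and $\lambda^{\T}=\pi^{*}\lambda^{\BB}$, gives log-crepancy (1); ampleness of $\lambda^{\BB}$ gives (2); nefness of the pullback gives (3); and the $(\mathrm{T})$-isomorphism, which matches the strict transforms of the discriminant and carries the Kirwan exceptional divisors to the toroidal boundary with equal coefficients, transports everything to the Kirwan side. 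Three remarks. First, the paper's own toolkit offers a shortcut for (2) and (3) that bypasses your singularity analysis entirely: by Odaka's theorem (\cite{Oda22}, Theorem 3.1, used in Section \ref{section:applications} for the negative direction) together with \cite{AE23, AEH21}, a normal compactification of the ball quotient is a log minimal model of itself if and only if it is semi-toroidal; since $X_{w,S}^{\T}$ is toroidal, (3) is immediate, and the identification of $(X_{w,S}^{\BB},\Delta_{w,S}^{\BB})$ as the log canonical model is likewise part of that framework. This matters because your claim that $(X_{w,S}^{\T},\Delta_{w,S}^{\T})$ is divisorially log terminal is stronger than needed and not obvious at points where the boundary fails to be snc even after passing to local finite covers; log canonicity plus nefness suffices for the notion of log minimal model used here. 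Second, a naming point: the relation $K_{\B^{n-3}}=(n-2)\lambda$ is the elementary canonical-bundle computation for the ball, not Hirzebruch--Mumford proportionality (which concerns Chern numbers); the genuinely nontrivial input is the coefficient-one extension across the toroidal boundary, i.e.\ Mumford's extension of proportionality to the non-compact case. Third, you correctly isolate that extension as the heart of the matter, but in a self-contained proof it must be cited or proved, not only sketched; as written it is the one step of your argument that remains a promissory note.
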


In contrast to this, if $(\mathrm{T})$ fails, we obtain the following characterization.

\begin{cor}[{Section \ref{section:applications}}]
\label{maincor:applications}
Let us assume that $(\mathrm{T})$ fails.
In terms of LMMP, the geometry of the Kirwan blow-up $M_{w,S}^{\K}$ can be described as follows:
    \begin{enumerate}
        \item Kirwan's partial resolution $M_{w,S}^{\K}$ is not a semi-toroidal compactification.
    \item In particular, $(M_{w,S}^{\K}, \Delta_{w,S}^{\K})$ is not a log canonical log minimal model.
        \item The two pairs $(M_{w,S}^{\K},\Delta_{w,S}^{\K})$ and $(X_{w,S}^{\T},\Delta_{w,S}^{\T})$ are not log $K$-equivalent.
    \end{enumerate}
\end{cor}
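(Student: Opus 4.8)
The plan is to deduce all three statements from Theorem \ref{mainthm:main_extendability} together with the dictionary between semi-toroidal compactifications and the log minimal model program supplied by Alexeev--Engel--Odaka, with part (1) serving as the keystone. In the theory of (semi-)toroidal compactifications of ball quotients the toroidal model is the maximal semi-toroidal one: every semi-toroidal compactification $Y$ of $X_{w,S}=\B^{n-3}/\Gamma_{w,S}$ receives a canonical morphism $X_{w,S}^{\T}\to Y$ extending the identity on the interior. Hence, if $M_{w,S}^{\K}$ were semi-toroidal, this domination would furnish a morphism $X_{w,S}^{\T}\to M_{w,S}^{\K}$ lifting the inverse of the Deligne--Mostow isomorphism. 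But since $(\mathrm{T})$ fails, Theorem \ref{mainthm:main_extendability} asserts that neither the isomorphism \eqref{isom:compactified_sigma_int} nor its inverse lifts to a morphism between $M_{w,S}^{\K}$ and $X_{w,S}^{\T}$; this contradiction proves (1).

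For (2) I would use that a log canonical log minimal model is automatically crepant over its ample model, which here is the Baily--Borel compactification (this is a general feature of these ball quotients, underlying Theorem \ref{thm:BB_Mumford} (2)). Suppose $(M_{w,S}^{\K},\Delta_{w,S}^{\K})$ were a log canonical log minimal model. Being a minimal model of the same interior log pair whose log canonical model is $(X_{w,S}^{\BB},\Delta_{w,S}^{\BB})$, the Kirwan contraction $\pi\colon M_{w,S}^{\K}\to M_{w,S}^{\G}\cong X_{w,S}^{\BB}$ would then be crepant, i.e. $K_{M_{w,S}^{\K}}+\Delta_{w,S}^{\K}=\pi^{*}(K_{X_{w,S}^{\BB}}+\Delta_{w,S}^{\BB})$. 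As $M_{w,S}^{\K}$ is $\Q$-factorial and a crepant pullback of a log canonical pair is again log canonical, this would realize $(M_{w,S}^{\K},\Delta_{w,S}^{\K})$ as a $\Q$-factorial log canonical minimal model that is crepant over the Baily--Borel compactification. By the Alexeev--Engel--Odaka correspondence such a model is a semi-toroidal compactification, contradicting (1). This is the asserted ``in particular''.

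For (3) I would argue that log $K$-equivalence propagates crepancy over the Baily--Borel compactification. The relevant general input is that the canonical toroidal pair $(X_{w,S}^{\T},\Delta_{w,S}^{\T})$ is crepant over $(X_{w,S}^{\BB},\Delta_{w,S}^{\BB})$, the toroidal boundary entering with coefficient $1$ precisely so that the discrepancy vanishes; this is the unordered analogue of Theorem \ref{thm:BB_Mumford} (1) and holds without assuming $(\mathrm{T})$. Now suppose the two pairs were log $K$-equivalent via a common resolution $p\colon W\to M_{w,S}^{\K}$, $q\colon W\to X_{w,S}^{\T}$ with $p^{*}(K_{M_{w,S}^{\K}}+\Delta_{w,S}^{\K})=q^{*}(K_{X_{w,S}^{\T}}+\Delta_{w,S}^{\T})$. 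Writing $\rho\colon X_{w,S}^{\T}\to X_{w,S}^{\BB}$ and $\sigma\defeq\rho\circ q=\pi\circ p\colon W\to X_{w,S}^{\BB}$, crepancy of the toroidal contraction gives $q^{*}(K_{X_{w,S}^{\T}}+\Delta_{w,S}^{\T})=\sigma^{*}(K_{X_{w,S}^{\BB}}+\Delta_{w,S}^{\BB})$. On the other hand $K_{M_{w,S}^{\K}}+\Delta_{w,S}^{\K}=\pi^{*}(K_{X_{w,S}^{\BB}}+\Delta_{w,S}^{\BB})+\delta E$ for the Kirwan exceptional divisor $E$ and some $\delta\in\Q$, whence $p^{*}(K_{M_{w,S}^{\K}}+\Delta_{w,S}^{\K})=\sigma^{*}(K_{X_{w,S}^{\BB}}+\Delta_{w,S}^{\BB})+\delta\,p^{*}E$. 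Equating the two expressions forces $\delta=0$, i.e. $(M_{w,S}^{\K},\Delta_{w,S}^{\K})$ is crepant over the Baily--Borel compactification; as in (2) this makes it a semi-toroidal compactification, contradicting (1). Hence (3) follows as well.

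The step I expect to be the main obstacle is the rigorous invocation of the Alexeev--Engel--Odaka correspondence in exactly the form used above, namely that \emph{every} $\Q$-factorial log canonical log minimal model crepant over the Baily--Borel compactification is a semi-toroidal compactification (and, dually, that the toroidal model dominates all semi-toroidal ones). One must verify that the Deligne--Mostow ball quotients $\B^{n-3}/\Gamma_{w,S}$ with the ramification boundary $\Delta_{w,S}^{\BB}$ satisfy the hypotheses under which this dictionary is available, and that the coefficient conventions for $\Delta_{w,S}^{\BB},\Delta_{w,S}^{\T},\Delta_{w,S}^{\K}$ are compatible with it. Should the abstract characterization be unavailable in precisely the required generality, the fallback is to establish the crepancy failure $\delta>0$ directly by a local discrepancy computation at the strictly polystable point attached to a partition $T_{1}\sqcup T_{2}$ with $|T_{1}|\ge 3$ and $\sum_{i\in T_1\sqcup T_2}w_i=1$ witnessing the failure of $(\mathrm{T})$, showing that the Kirwan exceptional divisor has discrepancy strictly greater than $-1$ over $(X_{w,S}^{\BB},\Delta_{w,S}^{\BB})$ --- equivalently that $K_{M_{w,S}^{\K}}+\Delta_{w,S}^{\K}$ is negative on the curves contracted by $\pi$, which independently yields (2) and (3). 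This is the route taken in the $8$- and $12$-point cases of \cite{HM25, HKM24}, and by the Reduction Method it would suffice to carry it out for the extremal elements of $A_{\mathrm{DM}}$.
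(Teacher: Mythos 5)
Your proof is correct and follows essentially the same strategy as the paper: part (1) is identical (a semi-toroidal compactification is dominated by the toroidal one, so semi-toroidality of $M_{w,S}^{\K}$ would produce a morphism $X_{w,S}^{\T}\to M_{w,S}^{\K}$ extending the inverse Deligne--Mostow map, contradicting Theorem \ref{mainthm:main_extendability}; the paper phrases this as $M_{w,S}^{\K}$ not lying between $X_{w,S}^{\BB}$ and $X_{w,S}^{\T}$, citing \cite[Theorem 3.23]{AEH21} and \cite[Theorem 5.14]{AE23}), and parts (2) and (3) rest on the same Odaka/Alexeev--Engel dictionary. The one notable difference is that the paper gets (2) in a single step from \cite[Theorem 3.1]{Oda22}, which states that a normal compactification of $X_{w,S}$ is semi-toroidal \emph{if and only if} it is a log minimal model of itself --- no crepancy hypothesis needed. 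Your detour through crepancy over the ample model is therefore redundant, and it is also the shakiest part of your write-up: identifying the ample model of the hypothetical log minimal model with $(X_{w,S}^{\BB},\Delta_{w,S}^{\BB})$ and extracting crepancy of $\pi$ requires semi-ampleness/finite-generation input that you do not supply; with Odaka's theorem in the cited form this entire step can simply be deleted, which also resolves the ``main obstacle'' you flag at the end. For (3) the paper merely cites \cite[Corollary 5.8, Theorem 5.9 (2)]{HKM24}, whereas you spell the argument out: your use of log-crepancy of $X_{w,S}^{\T}\to X_{w,S}^{\BB}$ (valid without $(\mathrm{T})$, by the standard Mumford-type computation for ball quotients) to force $\delta=0$ and then feed the contradiction back into (1)/(2) is a legitimate, essentially equivalent rendering of that cited argument, provided one also notes that crepancy transfers log canonicity and nefness so that the semi-toroidal criterion applies; your fallback via explicit discrepancy computations at extremal elements is not needed.
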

\begin{rem}
\begin{enumerate}
    \item In previous work we proved that $(M_{w,S}^{\K}, \Delta_{w,S}^{\K})$ is not a log canonical pair for the pair $(w_{\mathrm{G}}, \mathbb{N}_{8})$ \cite[Proposition 4.11]{HM25} and  $(w_{\mathrm{E}}, \mathbb{N}_{12})$ \cite[Corollary 5.10]{HKM24}.
    We believe that this also holds for any pair $(w,S)$ not satisfying $(\mathrm{T})$, but at this stage, we cannot exclude the possibility of this being a log canonical, but not log minimal model. 
    \item Also, in previous work \cite{CMGHL23, HKM24, HM25}, the $K$-equivalence of the varieties $M_{w,S}^{\K}$ and $X_{w,S}^{\T}$ (absolutely, and not as pairs) was discussed. 
    This is related to deep results in derived geometry, see \cite[Theorems 1.8, 1.9]{HKM24}. Our approach to out to $K$-equivalene depends on   
     the existence of reflective modular forms, using Borcherds products, which are known for some Deligne-Mostow varieties; see \cite[Section 8]{HKM24}. 
    Due to the lack of general results concerning the existence of reflective modular forms, covering all unitary groups corresponding to the pairs in $A_{\mathrm{DM}}$, this question is currently beyond our reach.
\end{enumerate}
\end{rem}

For the basic notion and definitions of birational geometry, we refer the reader to \cite{KM98}, but also to \cite[Subsection 1.4]{HKM24} for the concepts used in this paper.

\subsection{Organisation of the paper}
In Section \ref{sec:mainresults} we prove the main result Theorem \ref{mainthm:main_extendability}.
For this, we first analyse the intersection behaviour of the strict transform of the discriminant and the boundary divisors. If this is normal crossing (up to finite quotients)  we apply the Borel extension to lift
the Deligne-Mostow map to a morphism between the Kirwan blow-up and the toroidal compactification, which can then be shown to be an isomorphism. In Section \ref{sec:Reduction methods to minimal or maximal varieties}
we introduce a partial ordering among Deligne-Mostow varieties and rove a reduction theorem to minimal or maximal elements in the Deligne-Mostow universe in Theorem \ref{prop:functoriallity_kirwan}.
In a short Section \ref{section:applications} we discuss aspects of LMMP.
Finally, we discuss the structure of the universe of Deligne-Mostow varieties in Section \ref{sec:The moduli spaces of partly ordered points}. 
This mostly serves to illustrate the partial ordering introduced in Section \ref{sec:Reduction methods to minimal or maximal varieties}.

Throughout this paper, all varieties are considered over $\C$.

\subsection*{Acknowledgements}
This research was supported through the program ``Oberwolfach Research Fellows" by the Mathematisches Forschungsinstitut Oberwolfach, which enabled us to visit the institute for an extended stay in October 2023. We thank MFO for the hospitality and the excellent working conditions. The first author was also partially supported by DFG grant Hu 337/7-2. 
This work was also partially supported by the Alexander von Humboldt Foundation through a Humboldt Research Fellowship granted to the second author.

\section{Proof of main results}
\label{sec:mainresults}
We will start with a descriptions of stabilizer groups of strictly semistable points.

\begin{lem}
    \label{lem:stab_sspoints}
    Let $(w,S)$ be a Deligne-Mostow pair and $p\in(\P^1)^n/S[w]\cong (\P^1)^{n-|S|}\times \P^{|S|}$ be a strictly semistable point under the action of $\SL_2(\C)$ with weight given by $w$.
    If $(w,S)$ coincides with one of the three pairs $(w_{\mathrm{G}},\mathbb{N}_8)$ (Gaussian case), $((1/3)^6, \mathbb{N}_6)$ or $(w_{\mathrm{E}},\mathbb{N}_{12})$ (Eisenstein cases), then 
    the stabilizer subgroup $\Stab(p)$ and the connected component of the identity, $\Stab(p)^{\circ}$, are as follows:
    \begin{align*} 
    &\Stab(p)=\Bigl\{\begin{pmatrix}
\lambda & 0 \\
0 & \lambda^{-1} \\
\end{pmatrix}\in\SL_2(\C)\Bigr\}\bigcup \Bigl\{\begin{pmatrix}
0 & \lambda \\
-\lambda^{-1} & 0 \\
\end{pmatrix}\in\SL_2(\C)\Bigr\}\cong \C^{\times}\rtimes \Sigma_2,\\
    &\Stab(p)^{\circ}\cong\C^{\times}.
    \end{align*}    
Otherwise, $\Stab(p)$ is isomorphic to $\C^{\times}$.
\end{lem}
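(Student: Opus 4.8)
The plan is to compute the stabilizer at the level of configurations in $(\P^1)^n$, while keeping track of the $S[w]$-action, and to reduce everything to a normal form supported at two points of $\P^1$. First I would recall the GIT picture for $n$ weighted points on $\P^1$: a configuration is semistable exactly when no point of $\P^1$ carries total weight exceeding $1$, stable when every such total weight is $<1$, and hence strictly semistable precisely when some point of $\P^1$ carries weight exactly $1$ (and, since $\sum_i w_i = 2$, the complementary cluster then also has weight $1$). The orbits that matter, namely those with closed $\SL_2(\C)$-orbit (the polystable points, which are the centres appearing in the Kirwan blow-up), are represented, after acting by $\SL_2(\C)$, by a configuration supported at the two points $0,\infty\in\P^1$, each carrying total weight $1$. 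I would fix such a normal form for $p$: say $a$ of the marked points sit at $0$ and $b$ at $\infty$, with weights summing to $1$ on each side.

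Next I would pin down the identity component together with an ambient constraint. The diagonal torus $T=\{\diag(\lambda,\lambda^{-1})\}\cong\C^{\times}$ fixes $0$ and $\infty$ individually, hence fixes the configuration, so $T\subseteq\Stab(p)$. Conversely any $g\in\Stab(p)$ must preserve the support $\{0,\infty\}$ as a set, and the subgroup of $\SL_2(\C)$ preserving $\{0,\infty\}$ is exactly the normalizer $\mathrm{N}(T)$, i.e. $T$ together with the anti-diagonal coset $\{\antidiag(\lambda,-\lambda^{-1})\}$. Therefore $\Stab(p)\subseteq\mathrm{N}(T)$, which gives at once $\Stab(p)^{\circ}=T\cong\C^{\times}$ in all cases and leaves only the question of whether the swap $0\leftrightarrow\infty$ lies in $\Stab(p)$.

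The decisive step is to decide when an anti-diagonal element, which interchanges the points at $0$ with those at $\infty$, preserves $p$ up to the $S[w]=\Sigma_{|S|}$-action. If $S^{c}\neq\emptyset$ there is at least one ordered (labelled) marked point, necessarily sitting at $0$ or at $\infty$; the swap sends it to the other point, and since $S[w]$ only permutes the points indexed by $S$, no relabelling can undo this, so the swap moves $p$ and $\Stab(p)=\C^{\times}$. If instead $S=\mathbb{N}_n$, then by definition of $\Sigma$INT-$S$ all weights are equal, so $w_i=2/n$; a polystable configuration then has exactly $n/2$ of the indistinguishable points at each of $0$ and $\infty$, and the swap returns the identical unordered configuration, so the anti-diagonal coset lies in $\Stab(p)$ and $\Stab(p)=\mathrm{N}(T)\cong\C^{\times}\rtimes\Sigma_2$. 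Finally I would match this dichotomy to the list: a Deligne-Mostow pair with $S=\mathbb{N}_n$ has all weights equal to $2/n$, the $\Sigma$INT condition forces $(1-4/n)^{-1}=n/(n-4)\in\tfrac12\Z$, hence $(n-4)\mid 8$, and non-cocompactness (the existence of a subset of weight $1$) rules out $n=5$; this leaves exactly $n\in\{6,8,12\}$, i.e. the three pairs $((1/3)^6,\mathbb{N}_6)$, $(w_{\mathrm{G}},\mathbb{N}_8)$ and $(w_{\mathrm{E}},\mathbb{N}_{12})$, as claimed.

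The main obstacle I anticipate is the bookkeeping in this last symmetry analysis: correctly handling the interaction between the ordered points indexed by $S^{c}$ and the unordered points indexed by $S$, and making sure that ``stabilizing $p$'' is interpreted in the quotient $(\P^1)^n/S[w]$, so that one is allowed to post-compose the swap with an element of $\Sigma_{|S|}$ but never with a permutation of the $S^{c}$-points. Once this is set up cleanly, the remaining verification is a short GIT computation.
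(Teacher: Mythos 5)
Your proof is correct, and it follows the same geometric skeleton as the paper's: reduce a closed-orbit strictly semistable configuration to the normal form supported on $\{0,\infty\}$ with weight $1$ on each side, observe that any stabilizing element must preserve this two-point support and hence lie in the normalizer of the diagonal torus (diagonal union anti-diagonal matrices), and then decide when the swap $0\leftrightarrow\infty$ survives in the quotient by $S[w]$.

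Two points of comparison are worth recording. First, at the decisive step the paper argues more tersely: it says that an interchanging element ``is only possible if all weights are equal'' and then identifies the three cases ``by inspection of the list.'' As stated, equality of all weights is necessary but not sufficient — the pairs $(w_{\mathrm{G}},\mathbb{N}_k)$ for $k<8$ and $((1/3)^6,\mathbb{N}_k)$ for $k<6$ also have all weights equal, yet fall into the ``otherwise'' case of the lemma. Your version closes this gap cleanly: the swap can be undone by a relabelling only if \emph{no} index lies in $S^{c}$ (since $S[w]$ cannot move $S^{c}$-indices), so the exact criterion is $S=\mathbb{N}_n$, and conversely when $S=\mathbb{N}_n$ the equal split $n/2+n/2$ makes the swap act trivially on the unordered configuration. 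Second, where the paper appeals to the tables of arithmetic non-cocompact Deligne--Mostow pairs, you derive the list $n\in\{6,8,12\}$ directly from $\Sigma$INT ($(n-4)\mid 8$) together with non-cocompactness (excluding $n=5$, where no subset of weights sums to $1$); this makes the identification of the three exceptional pairs self-contained. You are also explicit, as the paper is not, that ``strictly semistable'' must here be read as ``with closed orbit'' (polystable), since non-closed strictly semistable orbits can have smaller, even trivial, stabilizers. These are refinements rather than a different route, but they make the argument tighter than the one in the paper.
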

\begin{proof}
Clearly, the groups given are contained in the stabilizer of a strictly semistable point $p$. It remains to show that the stabilizer cannot be bigger. By the characterization of strictly semistable points, the support of $p$
must consist of two points in $\P^1$, which we can take to be $(0:1)$ and $(1:0)$. Let $g\in\Stab(p)$. Then we have two possibilities, namely either that $g$ fixes these points or that $g$ interchanges them. By elementary linear algebra this implies that $g$ is either a diagonal or an anti-diagonal matrix, and since $g \in  \SL_2(\C)$ this shows that \[g= \begin{pmatrix}
\lambda & 0 \\
0 & \lambda^{-1} \\
\end{pmatrix}\  \mathrm{or}\ \begin{pmatrix}
0 & \lambda \\
-\lambda^{-1} & 0 \\
\end{pmatrix}.   \]   

The latter case certainly occurs in the three cases listed in the lemma. Conversely, if the stabilizer contains an element that interchanges the two points in the support of $p$, then this is only possible if all weights are 
equal. By inspection of the list of all arithmetic and non-compact Deligne-Mostow varieties, this only leaves the three cases stated.  
\end{proof}

Now, we discuss the normal crossing property of the discriminant and the boundary. In particular, we want to compare the situation on the Kirwan blow-up with that of the toroidal compactification.
This discussion will also clarify the relationship between $(\mathrm{T})$ and transversality.
\begin{prop}
\label{prop:transversal or non-transversal} For a given pair $(w,S)$, where the weight vector has length $n$, the  following statements hold:
    \begin{enumerate}
    \item The toroidal boundary of the toroidal compactification $X_{w,S}^{\T}$ of the associated ball quotient of $X_{w,S}$ and the strict transform of the discriminant divisor meet generically transversally.
    \item If $(\mathrm{T})$ holds, then the components of the Kirwan exceptional divisor and the strict transform of the discriminant of $M_{w,S}^{\G}$ intersect 
    transversally (up to local finite quotients) everywhere in $M_{w,S}^{\K}$.
    Otherwise $(\mathrm{T})$ fails   
    if and only if there exist components of the exceptional divisor of the Kirwan blow-up  $M_{w,S}^{\K}$ 
    and of the strict transform of the discriminant divisors that meet generically non-transversally.
\end{enumerate}
\end{prop}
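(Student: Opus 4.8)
The plan is to reduce both statements to explicit local models at the boundary strata and then isolate the single effect that distinguishes the two compactifications, namely the behaviour of the finite symmetric quotient. For part (1) I would argue on the toroidal side using functoriality. Since the rational boundary components of $\B^{n-3}$ are $0$-dimensional, each cusp carries a unique admissible cone decomposition, so over it the toroidal boundary is a single irreducible divisor and the construction commutes with the finite quotient, $X_{w,S}^{\T}=X_w^{\T}/S[w]$. In the ordered (INT) case the normal crossing analysis underlying Theorem \ref{thm:GKS21} already gives that the toroidal boundary and the strict transform of the discriminant meet transversally; since generic transversality is a local assertion that survives a finite quotient — in suitable local coordinates the quotient only raises a coordinate transverse to the boundary to a power — it descends to $X_{w,S}^{\T}$. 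Equivalently, one writes both divisors in a local uniformising chart at the cusp, where the boundary is the torus direction $\{t=0\}$ and the mirrors of the complex reflections in $\Gamma_{w,S}$ are transverse to it, and checks transversality by hand.

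For part (2), the heart of the matter, I would first translate $(\mathrm{T})$ into geometry. A polystable point of $M_{w,S}^{\G}$ corresponds to a partition of $\mathbb{N}_n$ into two weight-$1$ clusters, and the failure of $(\mathrm{T})$ is exactly the existence of such a cluster $T_1\sqcup T_2$ with $|T_1|\geq 3$, i.e.\ a polystable point at which at least three of the interchangeable marked points of $S$ collide. By Lemma \ref{lem:stab_sspoints} the $\SL_2(\C)$-stabiliser is $\C^{\times}$ away from the three exceptional pairs, so Luna's slice theorem gives a local model $N=N_+\oplus N_-$ for the two clusters, with $\C^{\times}$ acting with uniform weights $+1$ and $-1$ in the ordered picture, and with the discriminant components given by the hyperplanes $\{z_i=z_j\}\subset N_+$. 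The crucial observation is that the intrinsic Kirwan blow-up of the quotient $M_{w,S}^{\G}$ is taken after passing to the $S[w]$-invariants: on a cluster containing $k$ points of $S$ the invariant coordinates are the elementary symmetric functions $e_2,\dots,e_k$ of the centred positions, carrying $\C^{\times}$-weights $2,3,\dots,k$, so the relevant blow-up is \emph{weighted} and the discriminant becomes the single hypersurface $\{\Delta=0\}$, where $\Delta$ is the discriminant of the associated degree-$k$ polynomial.

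Transversality then hinges on $k$. If $k\leq 2$ the only invariant coordinate is $e_2=u^2$ and the discriminant is the smooth hyperplane $\{e_2=0\}$, so the blow-up is, up to a local finite quotient, the ordinary one and transversality is inherited from the ordered case of Theorem \ref{thm:GKS21}; this proves that $(\mathrm{T})$ forces transversality at every polystable point. If $k\geq 3$ the discriminant $\{\Delta=0\}$ is singular at the centre of the blow-up — for $k=3$ one has $\Delta=-4e_2^3-27e_3^2$, a cuspidal curve — and I would compute in the chart adapted to the lowest-weight coordinate $e_2$: writing $e_3=e_2\hat e_3$, the discriminant factors as $\Delta=-e_2^2(4e_2+27\hat e_3^{\,2})$, so its strict transform $\{4e_2+27\hat e_3^{\,2}=0\}$ is tangent to the exceptional divisor $\{e_2=0\}$ along their intersection, and this generic tangency persists after the residual $\C^{\times}$-quotient. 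The same weight mechanism — the coordinate $e_3$ having $\C^{\times}$-weight one greater than $e_2$ and entering $\Delta$ quadratically — produces such a tangency for every $k\geq 3$. Hence non-transversality occurs precisely when some cluster carries at least three points of $S$, which is the negation of $(\mathrm{T})$, giving the ``if and only if''.

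The main obstacle is exactly this last local computation. One must first identify the intrinsic Kirwan blow-up of $M_{w,S}^{\G}$ with the weighted blow-up governed by the degrees $2,3,\dots,k$ of the symmetric invariants — rather than with the push-forward $M_w^{\K}/S[w]$ of the ordered blow-up, which is where the two compactifications genuinely diverge — and then track the weighted-homogeneous tangency of $\{\Delta=0\}$ to pin the threshold at $k=3$, distinguishing the smooth branch divisor of a $\Sigma_2$-swap from the singular discriminant of $\Sigma_k$ for $k\geq 3$. The three exceptional pairs of Lemma \ref{lem:stab_sspoints}, where $\Stab(p)\cong\C^{\times}\rtimes\Sigma_2$, require a small additional check, but the cluster-swap does not affect the weight-$3$ tangency and the conclusion is unchanged.
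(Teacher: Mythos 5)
Your strategy for part (2) is essentially the paper's own: pass to the Luna slice at the polystable point determined by a weight-one cluster, write the discriminant there as a product of coordinate hyperplanes and $\mathrm{disc}(X^{k}+b_1X^{k-2}+\cdots+b_{k-1})$ for each cluster carrying $k$ points of $S$, blow up the origin, read off tangency exactly when some $k\geq 3$, and dispose of the three exceptional stabilizers $\C^{\times}\rtimes\Sigma_2$ of Lemma \ref{lem:stab_sspoints} by noting the extra $\Sigma_2$ does not fix a generic point of the tangency locus. Two real differences are worth recording. For part (1) the paper does not descend from the ordered toroidal compactification as you do; it goes through the Deligne--Mumford/Hassett compactification $\overline{M}_{w,S}$, which is normal crossing for combinatorial reasons, and uses that the contraction $\overline{M}_{w,S}\to X_{w,S}^{\T}$ is an isomorphism over the generic points of the relevant intersection. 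Your descent argument also works (your normal-form remark about the branch divisor is exactly what is needed), and both routes rest on \cite{GKS21}. Second, where the paper cites computer calculations from \cite{HM25} and \cite{HKM24} for $|T_1|=4,6$ and asserts similar computations for $3,5$, your explicit $k=3$ computation is correct, and the uniform argument for all $k\ge 3$ (your ``weight mechanism'' is stated imprecisely -- it is $e_k$, not $e_3$, that matters for $k>3$) is that the unique lowest-total-degree monomial of the discriminant is $e_k^{k-1}$, so its tangent cone at the origin is the non-reduced hypersurface $\{e_k^{k-1}=0\}$, forcing tangency of the strict transform with the exceptional divisor. This is, if anything, cleaner than the paper's treatment.

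However, one conceptual misstatement must be repaired before your write-up is sound, because taken literally it inverts the conclusion. Your ``crucial observation'' that the intrinsic Kirwan blow-up of $M_{w,S}^{\G}$ is the \emph{weighted} blow-up with weights $2,\dots,k$ is false. Kirwan's construction (and \cite[Lemma 3.11]{Kir85}, as used in Proposition \ref{prop:functoriallity_kirwan}) blows up the polystable orbit inside the smooth partially symmetrized space $\P^{|S|}\times(\P^1)^{n-|S|}$; on the Luna slice, whose coordinates are already the symmetric functions $e_2,\dots,e_k$ together with the positions of the $S^c$-points, this is the \emph{standard} blow-up of the origin, followed by the residual $\C^{\times}$(\,$\rtimes\Sigma_2$)-quotient. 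That is exactly what your substitution $e_3=e_2\hat e_3$ computes, which is why your calculation agrees with the paper. Had you actually performed the $(2,3)$-weighted blow-up, then in the chart $e_2=s^2$, $e_3=s^3t$ you would find $\Delta=-s^6(4+27t^2)$: the cusp is resolved and the strict transform meets the exceptional divisor transversally -- the opposite of what you need. The correct way to phrase the divergence from the ordered case is not that the blow-up becomes weighted, but that $M_{w,S}^{\K}$ is the standard blow-up taken in the symmetrized coordinates $e_j$ (and then quotiented by $\C^{\times}$), whereas $M_w^{\K}/S[w]$ is the blow-up in the position coordinates followed by the $S[w]$-quotient; these differ precisely when a cluster contains at least three points of $S$. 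With that sentence corrected, your proof stands.
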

\begin{proof}
(1) Let us denote by $\overline{M}_{w,S}$ the Deligne-Mumford compactification of $M_{w,S}$.
The combinatorial description \cite{AL02, Kee92} of the Deligne-Mumford compactification shows that $\overline{M}_{w,S}$ is a normal crossing compactification of $M_{w,S}$.
By \cite[Theorem 1.1]{GKS21}, since the toroidal compactification is compatible with the finite 
group action by $S[w]$, 
the toroidal compactification $X_{w,S}^{\T}$ is isomorphic to the quotient $M_w^{\K}/S[w]$.
In particular, there is a contraction map $f:\overline{M}_{w,S} \to X_{w,S}^{\T}$ described by combinatorial data.
The strict transform of the discriminant divisor and the toroidal boundary appear as certain components of  $\overline{M}_{w,S}\setminus M_{w.S}$ and meet transversally. 
Since the centre of the blow-up $f^{-1}$ has codimension $>2$, this is an isomorphism over the generic point of the intersection of the strict transform of the discriminant divisor and the toroidal boundary in    
$X_{w,S}^{\T}$. Hence transversality in $\overline{M}_{w,S}$ implies generic transversality in $X_{w,S}^{\T}$. This proves (1).

    (2) Let us suppose that the negation of $(\mathrm{T})$ holds, that is, we assume the existence of two sets $T_1$ and $T_2$ satisfying condition (\ref{equ:NT}).
    The computations needed for this part of the proof resemble those of the proofs of  \cite[Propoitipon 3.9]{CMGHL23}, \cite[Theorem 3.4]{HM25} and \cite[Theorem 2.5]{HKM24}. 
    For this reason, we will only give an outline without presenting all technical details.
    The condition on the summation of the weights $w_i$ implies that there exists a polystable point $p$ on $M_{w,S}^{\G}$ whose support consists of two points $\{p_1,p_2\}$ with the following property. 
    All points in the $n$-tuple $p$ which contribute to the cycle supported on $p_1$, have indices in $T_1\sqcup T_2$, whereas all points contributing to $p_2$ have indices in $(T_1\sqcup T_2)^{c}$.    
    Dividing $(\P^1)^n$ by the finite symmetric group $\Sigma_{|S|}$ we obtain the space $((\P^1)^{|S|} \times (\P^1)^{n-|S|})/\Sigma_{|S|} = \P^{|S|} \times (\P^1)^{n-|S|}$. 
    We then have to take the GIT quotient with respect to the induced action of  $\SL_2(\C)$.
    We consider a Luna slice around the polystable point $p$, which we blow up at the origin.
    The \'etale slice theorem asserts that the GIT quotient $\M^{\G}_{w,S}$ around $p$ is locally isomorphic to the quotient of a Luna slice, which is, locally analytically, the germ of the normal bundle of the 
    orbit $\SL_2(\C) \cdot p$, divided by the group $\Stab(p)$. 
    Since $\dim(\Stab (p)) = 1$ by Lemma \ref{lem:stab_sspoints} and $\dim(\SL_2(\C)) = 3$, the Luna slice is locally isomorphic to $\C^{n-2}$.
 
    When $S=T_1$, working with suitable coordinates on $\C^{n-2}$, which we denote by $a_1,\cdots,a_{|S^c|-1}$ and $ b_1,\cdots, b_{|S|-1}$, see the 
    proofs of \cite[Proposition 3.9]{CMGHL23}, \cite[Theorem 3.4]{HM25} and \cite[Theorem 2.5]{HKM24}, a computation shows that in the Luna slice, the discriminant is given by an equation
    \[a_1\cdots a_{|S^c|-1}\cdot \mathrm{disc} (X^{|T_1|} + b_{1} X^{|T_1|-2} + b_{2} X^{|T_1|-3} + \cdots + b_{|T_1|-1}) = 0,\]   
    locally at the origin. 
    If there exists another pair of sets $(T'_1,T'_2)$ with $T_1 \cap T'_1= \emptyset$ contradicting $(\mathrm{T})$, then we obtain a further factor \[\mathrm{disc} (X^{|T'_1|} + c_{1} X^{|T'_1|-2} + c_{2} X^{|T'_1|-3} + \cdots + c_{|T'_1|-1})\]
    to which the same arguments we give below, also apply. 
    Hence, when $T_1\subsetneq S$, taking the coordinates of $\C^{n-2}$ as $a_1,\cdots,a_{|S^c|},b_1,\cdots,b_{|T_1|-1},c_1,\cdots,c_{|S|-|T_1|-1}$, the discriminant is expressed by the equation \[a_1\cdots a_{|S^c|}\cdot \mathrm{disc}(X^{|T_1|} + b_1X^{|T_1|-2} + \cdots + b_{|T_1|-1})\cdot\mathrm{disc}(Y^{|S|-|T_1|} + c_1Y^{|S|-|T_1|-2} + \cdots + c_{|S|-|T_1|-1})=0.\]
    Now, a computer-based calculation of the blow-up shows, as in \cite[Theorem 3.4]{HM25}, that the strict transform (of the component coming from $T_1$) of the discriminant and the exceptional divisor do not meet transversally if and only if $|T_1| \geq 3$.
Note that the possible numbers are $3\leq |T_1| \leq 6$ and the cases of $|T_1| = 4, 6$ were computed in \cite[Theorem 3.4]{HM25}, \cite[Theorem 2.5]{HKM24}, respectively.
The remaining cases $|T_1| = 3, 5$ follow from a similar computation.
This discussion also highlights the meaning of  $(\mathrm{T})$: the existence of $T_1$ is reflected by the existence of an irreducible component of the discriminant divisor defined by the 
versal deformation space of a degeneration worse than $A_1$. 

We finally have to consider the role of elements of finite order in the stabilizer of $p$, noting that though the discriminant divisor we are considering is irreducible, its intersection with the Luna slice is not irreducible, as we saw.
But this finite group action occurs only in three cases and can be checked case by case using the description in  Lemma \ref{lem:stab_sspoints}. 
In all of these cases, the discriminant is given by the equation
\[
\mathrm{disc} (X^{m} + b_{1} X^{m-2} + b_{2} X^{m-3} + \cdots + b_{m-1}) \cdot \mathrm{disc} (Y^{m} + c_{1} Y^{m-2} + c_{2} Y^{m-3} + \cdots + c_{m-1})=0  
\]  
where $m=4,3,6$ respectively.
The stabilizer group of the polystable point $p$ contains a finite group $\Sigma_2$ and this group interchanges the two factors.
The non-transversality claim remains true, since, though the polystable point $p$ has a non-trivial finite stabilizer, a general point on the non-transversal intersection is not fixed by this group action.

Conversely, assuming (T), around all polystable points, the above discussion still applies, where we now have to consider $A_m$-singularities for $m \leq 1$.
Then the same local computations show that the strict transform of the discriminant and the exceptional divisor meet transversally. 
\end{proof}

We shall now give two examples that exemplify the situation. 
\begin{ex}[{Transversal case}]
\label{ex:transversal}
Let us consider the case of $(w_{^G}, \mathbb{N}_2)$.
Since $w_{\G}$ satisfies the INT condition, this pair also fulfills the $\Sigma$INT-$\mathbb{N}_2$ condition.
In this case $(\mathrm{T})$ holds; since $|\mathbb{N}_2| <3$, no subset $T_1 \subset \mathbb{N}_2$ satisfies $|T_1| \geq 3$.
To illustrate how the transversal intersection occurs, we examine a strictly semi-stable point $p$ corresponding to $T_1 = \mathbb{N}_2$ and $T_2 = \{3,4\}$, giving a decomposition $\mathbb{N}_8 = (T_1 \sqcup T_2) \sqcup (T_1 \sqcup T_2)^c$.
As in the proof of Proposition \ref{prop:transversal or non-transversal} (2), the Luna slice is isomorphic to $\C^6$, where the coordinates are denoted by $a_1,\cdots, a_5,b_1$ and the discriminant divisor is given by a normal crossing divisor 
\[a_1\cdots a_5\cdot b_1 =0.\]
Hence, after blowing up at the origin, the strict transform of the discriminant clearly meets the exceptional divisor generically transversally. We also note that we do not have to consider 
any finite group actions in this case. The discussion involving any other strictly semi-stable point is the same.
\end{ex}

\begin{ex}[{Non-transversal case}]
\label{ex:nontransversal}
Let us consider the case of $n=11$ and 
\[w=\left(\frac{1}{3},\frac{1}{6},\frac{1}{6},\frac{1}{6},\frac{1}{6},\frac{1}{6},\frac{1}{6},\frac{1}{6},\frac{1}{6},\frac{1}{6},\frac{1}{6}\right)\]
 which can be found in \cite[Table 2]{GKS21}.
Though this weight does not satisfy the INT condition, taking $S=\{2,\cdots,11\}$, we can check that the pair $(w,S)$ satisfies the $\Sigma$INT-$S$ condition.
Then $(\mathrm{T})$ fails since $|S| \geq 3$ and it suffices to take $T_1 = \{2,3,4,5,6,7\}$ and $T_2 = \emptyset$.
The corresponding strictly semi-stable point $p$ has  coordinates $(\infty,0,0,0,0,0,0,\infty,\infty,\infty,\infty)$, and the description of the discriminant divisor in $\C^9$, with coordinates $a_1,b_1\cdots,b_5,c_1,\cdots, c_3$, is given by
\[a_1 \cdot \mathrm{disc}(X^6 + b_1 X^4 + b_2 X^3 + b_3 X^2 + b_4 X + b_5)\cdot \mathrm{disc}(Y^4 + c_1 Y^2 + c_2 Y + c_3) = 0,\]
 locally around $p$.
As one can see in \cite[Theorem 2.5]{HKM24}, the strict transform of the component $\mathrm{disc}(X^6 + b_1 X^4 + b_2 X^3 + b_3 X^2 + b_4 X + b_5) = 0$ and the exceptional divisor of the Kirwan blow-up 
meet generically non-transversally. The same holds for the component given by $ \mathrm{disc}(Y^4 + c_1 Y^2 + c_2 Y + c_3) = 0$. Again, no finite group actions have to be considered.
Hence we conclude that the Kirwan blow-up is \textit{not} isomorphic to the toroidal compactification, because in the latter space, the strict transform of the discriminant divisor and the toroidal boundary meets transversely up to finite quotients by Proposition \ref{prop:transversal or non-transversal} (1).
\end{ex}

The following theorem is well-known to specialists.
\begin{thm}[{Application of the Borel extension theorem}]
\label{thm:Borel extension}
Let $Y$ be a projective variety and $D$ be a divisor on $Y$. Assume that the following holds: locally in the $\C$-topology, and up to taking finite quotients, $(Y,D)$ is a pair consisting of a smooth variety $Y$ 
and a normal crossing divisor $D$, and assume further that $f: Y \setminus D  \to \B^n / \Gamma$ is a locally liftable map, where $\Gamma$ is an arithmetic unitary group. Then the map $f$ extends to a morphism $\overline{f}: Y \to (\overline{\B^n /\Gamma)}^{\tor}$ to the (unique) toroidal compactification. 
\end{thm}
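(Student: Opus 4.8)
The plan is to reduce the global extension problem to the classical local extension theorem of Borel and to handle the finite quotients by passing to local covers and then descending. Since $Y$ is projective and the toroidal compactification $\overline{\B^n/\Gamma}^{\tor}$ is proper, once a holomorphic extension $\overline{f}\colon Y\to\overline{\B^n/\Gamma}^{\tor}$ has been produced, the fact that it is a morphism of varieties follows from Serre's GAGA; thus the entire content is the construction of the holomorphic extension. Moreover the extension problem is local around $D$: the map $f$ is already holomorphic on the dense open set $Y\setminus D$, and a holomorphic map into a separated space is determined by its restriction to a dense open set, so it suffices to extend $f$ across a neighbourhood of each point $y\in D$.

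Fix such a $y$. By hypothesis, locally in the $\C$-topology and up to a finite quotient, $(Y,D)$ is a smooth pair with normal crossing divisor. First I would pass to the local finite cover resolving the quotient, obtaining a polydisc $V\cong\Delta^m$ with $D$ pulled back to a union of coordinate hyperplanes, so that $V\setminus D\cong(\Delta^*)^k\times\Delta^{m-k}$ and $f$ restricts to a holomorphic map $g\colon(\Delta^*)^k\times\Delta^{m-k}\to\B^n/\Gamma$. Local liftability of $f$ passes to $g$, so $g$ lifts locally to the ball $\B^n$ and its monodromy around the components of $D$ defines a representation of $\Z^k$ into $\Gamma$.

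Next I would reduce to the torsion-free situation. Choose a neat normal subgroup $\Gamma'\leq\Gamma$ of finite index, so that $\B^n/\Gamma'$ is smooth and the finite group $\Gamma/\Gamma'$ acts with quotient $\B^n/\Gamma$. Composing the monodromy of $g$ with $\Gamma\to\Gamma/\Gamma'$ and passing to the associated finite Kummer cover of the polydisc (taking roots in the $\Delta^*$-directions), which is again a punctured polydisc, I may assume that the pulled-back map has monodromy in $\Gamma'$ and hence lifts to a holomorphic map into $\B^n/\Gamma'$. Now the core input is the classical Borel extension theorem: a locally liftable holomorphic map from a punctured polydisc into the arithmetic quotient $\B^n/\Gamma'$ by a neat group extends to a holomorphic map into the toroidal compactification $\overline{\B^n/\Gamma'}^{\tor}$. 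Here it is essential that $\B^n$ has rank one, so that the rational boundary components are points and each cone is a single ray; consequently the toroidal compactification involves no choice of fan and is canonical, which is the uniqueness asserted in the statement.

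Finally I would descend and glue. By the functoriality of toroidal compactifications under finite quotients — the same principle used in the proof of Proposition \ref{prop:transversal or non-transversal} — one has $\overline{\B^n/\Gamma}^{\tor}=\overline{\B^n/\Gamma'}^{\tor}/(\Gamma/\Gamma')$, and likewise the Kummer and resolution covers above are compatible with the toroidal boundary. The holomorphic extension produced on the cover is unique, hence equivariant for all the finite groups involved, and therefore descends to a holomorphic extension of $f$ over a neighbourhood of $y$. Uniqueness of holomorphic extensions guarantees these local extensions agree on overlaps, so they glue to a global holomorphic $\overline{f}\colon Y\to\overline{\B^n/\Gamma}^{\tor}$, which is a morphism by GAGA. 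I expect the principal difficulty to be exactly this bookkeeping of finite quotients: one must arrange the covers resolving $(Y,D)$, the Kummer covers trivialising the monodromy modulo $\Gamma'$, and the quotient by $\Gamma/\Gamma'$ to be mutually compatible so that the locally constructed extensions genuinely glue and descend. The analytic heart of the matter — the growth estimate forcing extension across the punctures with no essential singularity — is precisely the content of Borel's theorem, which we invoke as a black box.
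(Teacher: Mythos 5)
Your proof is correct and follows the same overall skeleton as the paper's: reduce to a local normal crossing situation via finite covers, apply a Borel-type extension theorem, use the rank-one geometry of the ball to land in the toroidal compactification, and descend by uniqueness of extensions. Two differences in execution are worth flagging. First, what you invoke as a single black box --- ``the classical Borel extension theorem'' mapping directly into $\overline{\B^n/\Gamma'}^{\tor}$ --- is in the paper deliberately split into two distinct results: Borel's actual theorem \cite{Borel72}, which extends the map only to the \emph{Baily-Borel} compactification, and then the lifting criterion for toroidal compactifications of \cite{AMRT10, Nam80, FC90}, which lifts that extension to the toroidal compactification precisely when the monodromy cones of $(Y,D)$ map into the chosen fan. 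The paper then observes that for ball quotients each fan is a single ray, so the cone condition is vacuous; you correctly identify this rank-one phenomenon as the reason no fan choice is needed, but your attribution compresses two theorems into one, and keeping them separate makes transparent exactly where local liftability and the cone condition enter. Second, your reduction to a neat normal subgroup $\Gamma'\leq\Gamma$ via Kummer covers is an extra layer the paper does not need: since local liftability is already a hypothesis of the theorem, Borel's theorem and the toroidal lifting criterion (both formulated for locally liftable maps) apply directly to $\B^n/\Gamma$, and the only finite-group bookkeeping required is the single Galois cover making $(Y,D)$ a normal crossing pair, with descent following from equivariance of the extension (which holds because it agrees with an equivariant map on a dense open set). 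Your route is valid, but it carries precisely the cover-compatibility burden that you yourself single out as the principal difficulty, and which the paper's more economical argument avoids.
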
  
\begin{proof}
We first note that extending the map is a local problem and that we can always go to a  Galois cover given by a finite group $G$. It is enough to extend there, as the extended map is $G$-equivariant, and we can take the quotient by $G$. Hence we can assume that $(Y,D)$ is a normal crossing pair. The Borel extension map \cite[Theorem A]{Borel72} says the following: if $\Delta$ is the unit disc and 
$\Delta^*= \Delta \setminus \{0\}$ the punctured  disc, then every holomorphic map $g: (\Delta^*)^a \times \Delta^b \to \B^n\ /\Gamma$ extends to a holomorphic 
map $\overline{g}: \Delta^a \times \Delta^b \to \overline{(\B^n/ \Gamma)}^{\BB}$ to 
the Baily-Borel compactification. 

We now want to lift this map to the toroidal compactification. For this, we use a well-known extension theorem for toroidal compactifications for locally liftable maps, see 
\cite[Theorem 7.29]{Nam80}, \cite[Theorem 7.2]{AMRT10}, \cite[Theorem 5.7]{FC90},
which holds in great generality for locally symmetric domains. Recall that, in general, toroidal compactifications depend on the choice of suitable, compatible fans (for each cusp). 
This theorem says that, at a given cusp, the map can be lifted to the toroidal compactification if the monodromy 
cones defined by $(Y,D)$ are mapped to cones in the fan $\Sigma$ defining the toroidal compactification at this cusp. Since, in the case of ball quotients, these fans only consist of a 1-dimensional ray, 
this condition is empty for ball quotients and hence the map can be extended.  
\end{proof}

We shall now apply this to our situation.  

\begin{thm}
\label{thm:main_extendability}
     The (Deligne-)Mostow isomorphism \eqref{mor:unordered_period_map}
    \[M_{w,S}\hookrightarrow X_{w,S}\defeq\B^{n-3}/\Gamma^{S}_w\]
    extends to an isomorphism between $M^{\K}_{w,S}$ and $X_{w,S}^{\T}$ if and only if $(\mathrm{T})$ is satisfied.
    Otherwise, neither the isomorphism given by \eqref{isom:compactified_sigma_int}, nor its inverse, lift to $M^{\K}_{w,S}$ and $X_{w,S}^{\T}$ respectively. 
    In particular, the Kirwan blow-up and the toroidal compactification are not naturally isomorphic in this case.
\end{thm}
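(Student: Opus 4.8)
The plan is to prove the two implications separately, in each case feeding the local transversality analysis of Proposition \ref{prop:transversal or non-transversal} into a global extension/rigidity argument. First suppose $(\mathrm{T})$ holds. By Proposition \ref{prop:transversal or non-transversal}(2) the boundary divisor $\Delta_{w,S}^{\K}$ of $M_{w,S}^{\K}$ — the union of the strict transform of the discriminant and the Kirwan exceptional divisor — is, locally in the $\C$-topology and up to finite quotients, a normal crossing divisor on the (locally, up to finite quotients, smooth) variety $M_{w,S}^{\K}$. Its complement is $M_{w,S}^{\K}\setminus\Delta_{w,S}^{\K}=M_{w,S}$, which maps via the locally liftable period map \eqref{mor:unordered_period_map} into the ball quotient $\B^{n-3}/\Gamma_{w,S}$. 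I would therefore apply Theorem \ref{thm:Borel extension} with $Y=M_{w,S}^{\K}$ and $D=\Delta_{w,S}^{\K}$ to obtain a morphism $\phi\colon M_{w,S}^{\K}\to X_{w,S}^{\T}$ extending the period map.

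It then remains to show that $\phi$ is an isomorphism. It is a proper birational morphism of normal projective varieties, compatible with the blow-down maps to the common base $M_{w,S}^{\G}\cong X_{w,S}^{\BB}$ of Theorem \ref{thm:DM_BB}, and it is already an isomorphism over the open stratum $M_{w,S}$; so it suffices to check that $\phi$ is a local isomorphism along the boundary. For this I would compare local models near a polystable point $p$ and its corresponding cusp: by the Luna slice computation in the proof of Proposition \ref{prop:transversal or non-transversal}, when $(\mathrm{T})$ holds only $A_1$-type (or milder) degenerations occur, so the Kirwan blow-up is locally the blow-up of the slice $\C^{n-2}/\Stab(p)$ at the origin, and since the cusps of a ball quotient have rank one (the toroidal fan being a single ray, as noted in the proof of Theorem \ref{thm:Borel extension}) this coincides with the local toroidal model of $X_{w,S}^{\T}$. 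Equivalently, one may invoke functoriality of toroidal compactifications under the finite quotient by $S[w]$ together with Theorem \ref{thm:GKS21} to identify $X_{w,S}^{\T}\cong M_w^{\K}/S[w]$ and match this with $M_{w,S}^{\K}$.

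For the converse, suppose $(\mathrm{T})$ fails and assume for contradiction that \eqref{isom:compactified_sigma_int} lifts to an isomorphism $\psi\colon M_{w,S}^{\K}\xrightarrow{\sim}X_{w,S}^{\T}$ (or that its inverse lifts to a morphism in the opposite direction). Any such lift is an isomorphism over $M_{w,S}$, carries the strict transform of the discriminant to the strict transform of the discriminant, and carries the Kirwan exceptional divisor to the toroidal boundary, as both are precisely the preimage of the Baily--Borel boundary. By Proposition \ref{prop:transversal or non-transversal}(2) the former pair meets generically non-transversally in $M_{w,S}^{\K}$, whereas by Proposition \ref{prop:transversal or non-transversal}(1) the latter pair meets generically transversally in $X_{w,S}^{\T}$; since generic transversality of two divisors is an isomorphism invariant of the pair, this is a contradiction. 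The same comparison rules out even a one-sided lift: a lifting morphism contracts neither the discriminant nor the exceptional divisor (each dominates its image downstairs), so at the generic point of the non-transversal intersection it would have to be a local isomorphism, contradicting the transversality on the toroidal side. The underlying local computation is exactly the one carried out in \cite[Theorem 3.4]{HM25} and \cite[Theorem 2.5]{HKM24}.

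I expect the genuinely delicate step to be the verification that the extended morphism $\phi$ is an \emph{isomorphism} (and not merely a proper birational contraction) in the $(\mathrm{T})$ case, i.e. the precise matching of the Kirwan and toroidal local models along the boundary. In particular the bookkeeping for the finite part of $\Stab(p)$ in the three exceptional cases of Lemma \ref{lem:stab_sspoints}, where a $\Sigma_2$ interchanges the two factors of the discriminant, requires care; by contrast, the extension itself (via Theorem \ref{thm:Borel extension}) and the non-existence of a lift when $(\mathrm{T})$ fails (via the transversality dichotomy) are comparatively formal once Proposition \ref{prop:transversal or non-transversal} is in hand.
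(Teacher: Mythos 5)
Your overall architecture (Borel extension when $(\mathrm{T})$ holds, transversality dichotomy when it fails) matches the paper, and your exclusion of a two-sided isomorphism when $(\mathrm{T})$ fails is exactly the paper's argument. The genuine problem is the step upgrading the extended morphism $\phi\colon M^{\K}_{w,S}\to X^{\T}_{w,S}$ to an isomorphism. Your local-model matching never uses $(\mathrm{T})$ or the discriminant: the facts you invoke (the Luna slice description, the fan at a ball-quotient cusp being a single ray) hold for \emph{every} Deligne--Mostow pair, including those where $(\mathrm{T})$ fails and the two compactifications are provably non-isomorphic, so an argument of that shape cannot be correct as stated. Moreover, the Kirwan blow-up near a polystable point $p$ is the blow-up of the slice followed by the $\Stab(p)$-quotient, and blow-up and finite quotient do not commute in general --- that non-commutation is precisely the phenomenon at stake. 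Your ``equivalently'' alternative is circular: identifying $X^{\T}_{w,S}\cong M_w^{\K}/S[w]$ is fine (toroidal functoriality plus Theorem \ref{thm:GKS21}), but ``matching'' $M_w^{\K}/S[w]$ with $M^{\K}_{w,S}$ is equivalent to the statement being proved (see the corollary in the introduction and Remark \ref{rem:finitequotientsK}: the Kirwan blow-up is \emph{not} functorial under finite quotients, and Proposition \ref{prop:functoriallity_kirwan} does not apply to the quotient map $\varphi_w$ since it is not a closed immersion). The paper instead uses a rigidity argument, Lemma \ref{lem:no_morphism}: the extended morphism contracts no divisor, because the Kirwan exceptional divisors and the toroidal boundary divisors are irreducible and in bijection via the identification of polystable points with cusps, and the target $X^{\T}_{w,S}$ is $\Q$-factorial, so by \cite[Corollary 2.63]{KM98} there are no small contractions either; hence $\phi$ is an isomorphism.

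The same missing ingredient creates a gap in your exclusion of one-sided lifts when $(\mathrm{T})$ fails. From ``contracts neither the discriminant nor the exceptional divisor'' you only get that a lifting morphism is an isomorphism in codimension one; this does \emph{not} imply it is a local isomorphism at the generic point of the codimension-two non-transversal intersection locus, because a small contraction could collapse exactly that locus, after which your comparison of local intersection behaviour says nothing. The paper closes this hole by applying Lemma \ref{lem:no_morphism} in both directions (both $M^{\K}_{w,S}$ and $X^{\T}_{w,S}$ are $\Q$-factorial, having only finite quotient singularities): any one-sided lift would automatically be an isomorphism, which then contradicts the transversality dichotomy of Proposition \ref{prop:transversal or non-transversal}. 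In short, your proposal becomes correct once both ``upgrade'' steps are routed through the $\Q$-factoriality rigidity lemma rather than through local model matching or Kirwan functoriality under finite quotients.
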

\begin{proof}
  
    Let us first assume that $(\mathrm{T})$ holds. We first claim that the birational map between the Kirwan compactification and  
    the toroidal compactification extends to a morphism. For this we want to check that the conditions of Theorem \ref{thm:Borel extension} apply. Local liftabality of the period map is clear from 
    its construction. The second condition, namely local transversality on the Kirwan side (up to finite quotients), follows from     
    Proposition \ref{prop:transversal or non-transversal} (2). 
    
    Next, we claim that the extended map is an isomorphism. For this, we want to apply Lemma \ref{lem:no_morphism}. 
    It follows from the isomorphism (C$\Sigma$INT) in Theorem \ref{thm:DM_BB} that that the Deligne-Mumford period map extends
    to an isomorphism of the discriminants on the GIT quotient $M^{\G}_{w,S}$ and the Baily-Borel compactification $X_{w,S}^{\T}$.
    Also, the poystable points of the GIT-quotients and the cusps of the Baily-Borel compactification are in $1:1$-correspondence.
    For each polystable point or cusp, the exceptional Kirwan boundary and the toroidal component are irreducible. For the Kirwan blow-up this follows from
    a Luna slice computation 
    and  for the toroidal compactification, this holds by construction. 
    Hence the extended map must map the generic points the exceptional Kirwan divisor bijectively to the generic points of the components of the toroidal boundary (and hence cannot be a divisorial contraction).
    Finally, we note that the toroidal compactification is $\Q$-factorial by construction. Hence Lemma \ref{lem:no_morphism} applies and we get an isomorphism.
    
    Now assume that  $(\mathrm{T})$ fails. By Proposition \ref{prop:transversal or non-transversal} the Kirwan blow-up $M^{\K}_{w,S}$ and the toroidal compactification $X_{w,S}^{\T}$ cannot be naturally isomorphic since 
    we have generic transversality between the boundary and the discriminant on the toroidal side, whereas this fails on the Kirwan side (in each case without taking local finite covers). But then we claim that 
    neither the Deligne-Mumford isomorphism nor its inverse extend to the Kirwan and the toroidal compactification respectively. 
    Indeed, this follows from Lemma \ref{lem:no_morphism} where we use that both  $M^{\K}_{w,S}$ and $X_{w,S}^{\T}$ are $\Q$-factorial. This follows since both varieties have, by construction, at most finite 
    quotient singularities and are hence locally  $\Q$-factorial.   
\end{proof}
In the above proof we have used the following fundamental lemma which gives a useful criterion for certain morphisms to be an isomorphism. 

\begin{lem}
\label{lem:no_morphism}
    Let $X$ and $Y$ be normal projective varieties with $Y$ being $\Q$-factorial and $f:X\dashrightarrow Y$ be a birational map.
    We assume that there exist irreducible divisors $\Delta_{X,1},\cdots, \Delta_{X, m}\subset X$ and $\Delta_{Y,1},\cdots,\Delta_{Y,m}\subset Y$, which are mutually distinct, so that the restriction of $f$ is an isomorphism $f\vert_{X\setminus\Delta_X}:X\setminus\Delta_X\cong Y\setminus\Delta_Y$ where $\Delta_X\defeq\Delta_{X,1}\cup\cdots\cup\Delta_{X,m}$ and $\Delta_Y\defeq\Delta_{Y,1}\cup\cdots\cup\Delta_{Y,m}$.
    If $f$ naturally extends to a morphism $f:X\to Y$ and $f(\Delta_{X,i})\subset\Delta_{Y,i}$, then this must be an isomorphism.
\end{lem}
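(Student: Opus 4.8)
The plan is to upgrade the hypotheses into the statement that $f$ is a \emph{small} birational morphism — one whose exceptional locus has codimension at least two — and then to invoke the fact that a small proper birational morphism onto a $\Q$-factorial normal variety must be an isomorphism. Throughout I would regard $f\colon X\to Y$ as an honest proper, surjective, birational morphism (properness because $X$ is projective, surjectivity because its image is closed and dense), write $n=\dim X=\dim Y$, and let $\mathrm{Exc}(f)\subset X$ denote the locus where $f$ fails to be a local isomorphism. Since the hypothesis provides an isomorphism $f|_{X\setminus\Delta_X}\colon X\setminus\Delta_X\cong Y\setminus\Delta_Y$, I get immediately that $\mathrm{Exc}(f)\subseteq\Delta_X$, so the only prime divisors that could possibly lie in the exceptional locus are the $\Delta_{X,i}$.

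First I would pin down the images of the boundary components. From $Y=f(X)=f(\Delta_X)\cup(Y\setminus\Delta_Y)$ one obtains $\Delta_Y\subseteq f(\Delta_X)$, while the hypothesis $f(\Delta_{X,i})\subseteq\Delta_{Y,i}$ gives the reverse inclusion; hence $\bigcup_i f(\Delta_{X,i})=\bigcup_i\Delta_{Y,i}$. Now fix an index $j$. Each $f(\Delta_{X,i})$ is closed (a proper image of a closed set) and irreducible, and $\Delta_{Y,j}$ is irreducible, so the covering $\Delta_{Y,j}\subseteq\bigcup_i f(\Delta_{X,i})$ forces $\Delta_{Y,j}\subseteq f(\Delta_{X,i_0})\subseteq\Delta_{Y,i_0}$ for a single index $i_0$; since the $\Delta_{Y,i}$ are mutually distinct irreducible divisors, this is possible only for $i_0=j$. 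Therefore $f(\Delta_{X,j})=\Delta_{Y,j}$ for every $j$.

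Next I would argue that no $\Delta_{X,j}$ is exceptional. The equality $f(\Delta_{X,j})=\Delta_{Y,j}$ shows that $\Delta_{X,j}$ dominates a divisor of $Y$ of the same dimension $n-1$, so it is not contracted. More precisely, a proper birational morphism of normal varieties is an isomorphism over the generic point of every prime divisor of the target: at the codimension-one point $\eta$ of $\Delta_{Y,j}$ the local ring $\mathcal{O}_{Y,\eta}$ is a discrete valuation ring and, $f$ being proper and birational, the valuative criterion identifies it with $\mathcal{O}_{X,\xi}$ at the corresponding codimension-one point $\xi\in\Delta_{X,j}$. Thus $f$ is a local isomorphism at the generic point of each $\Delta_{X,j}$, so no $\Delta_{X,j}$ lies in $\mathrm{Exc}(f)$. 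Combined with $\mathrm{Exc}(f)\subseteq\Delta_X=\bigcup_j\Delta_{X,j}$, this shows $\mathrm{Exc}(f)$ contains no prime divisor, i.e. $f$ is small.

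Finally I would conclude via $\Q$-factoriality. If $\mathrm{Exc}(f)$ were nonempty it would (by normality of $Y$ and Zariski's Main Theorem) carry a positive-dimensional fibre, hence a curve $C$ contracted by $f$; choosing a prime divisor $B\subset X$ with $B\cdot C>0$ (for instance a general member of a sufficiently ample linear system), its image $B_Y\defeq f_*B$ is again a prime divisor because $f$ is small, and it is $\Q$-Cartier since $Y$ is $\Q$-factorial. As $f$ has no exceptional divisors, $f^{*}B_Y=B$, and the projection formula together with $f_*C=0$ gives $B\cdot C=f^{*}B_Y\cdot C=B_Y\cdot f_*C=0$, contradicting $B\cdot C>0$. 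Hence $\mathrm{Exc}(f)=\emptyset$, so $f$ is a proper birational morphism with finite fibres onto a normal variety, and Zariski's Main Theorem makes it an isomorphism. The step I expect to be the main obstacle — and the only place where $\Q$-factoriality is genuinely used — is this last one, namely ruling out a small contraction; everything before it is bookkeeping with the boundary components and standard normality in codimension one.
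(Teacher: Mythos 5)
Your proof is correct and takes essentially the same route as the paper's: establish $Y=f(X)$ and $f(\Delta_{X,i})=\Delta_{Y,i}$, deduce that $f$ contracts no divisor and hence could at worst be a small contraction, and rule that out using $\Q$-factoriality of $Y$. The only difference is that where the paper simply cites \cite[Corollary 2.63]{KM98} for this last step, you re-prove it by hand via the projection-formula argument, which is the standard proof of that fact.
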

\begin{proof}
     This follows from a similar discussion in \cite[Theorem 2.7]{HKM24}.
     Since $f$ is a morphism between projective varieties, the image of $X$ in $Y$ is closed.
     In other words, we have $Y=f(X)$ and moreover $f(\Delta_{X, i})=\Delta_{Y, i}$.
     Suppose that $f$ is not an isomorphism.
     This implies that $f$ cannot be bijective since $X$ and $Y$ are normal.
     Hence there exists an $i$ so that $f$ contracts a closed subvariety in $\Delta_{X,i}$, which forces $f$ to be a small contraction.
     This contradicts the fact that $Y$ is $\Q$-factorial, see \cite[Corollary 2.63]{KM98}.
\end{proof}

\section{Reduction methods to minimal or maximal varieties}
\label{sec:Reduction methods to minimal or maximal varieties}
In this section, we study the relationship between various Deligne-Mostow varieties by introducing a suitable partial ordering.
We will see that the proof of Theorem \ref{mainthm:main_extendability} 
can be reduced to considering maximal or minimal elements with respect to this partial order (Reduction Method \ref{redmethodsub}).
\begin{defn}
\label{defn:weights} 
Let $(w, S), (w', S')\in A_{\mathrm{DM}}$ be Deligne-Mostow pairs whose length of the weight vectors are $n$ and $n'$.
We assume (and can do so without any restrictions) that $w_i \leq w_j$ if $i\leq j$, and similarly for $w'$.
    We define a  \textit {partial order} $(w,S)\prec (w',S')$ by the following conditions:
        \begin{itemize}
            \item $n\leq n'$.
            \item $w_i'\leq w_i$ for $1\leq i \leq n$.
            \item $|S|=|S'|$.
            \item $w(S) = w'(S')$.
        \end{itemize}
\end{defn}

\subsection{(Non)Functoriality of the Kirwan blow-up} \label{subsec:functKirwan}
Here, we prove the functoriality of the Kirwan blow-up for Deligne-Mostow pairs satisfying $(w,S)\prec (w',S')$. We will use this later to reduce to the minimal and maximal cases.
Let us first consider the following general situation \cite[Section 3]{Kir85}.
Let $G_i$ be connected reductive groups acting on a smooth projective variety $X_i$ for $i=1,2$ with linearizations $\L_i$.
Let $Z_i$ be a $G_i$-invariant smooth closed irreducible 
subvariety of $X_i$ and take the standard blow-up $\pi_i: \mathrm{Bl}_{Z_i}X_i\to X_i$ along $Z_i$.
We denote by $\mathcal{I}_{Z_i}$ the corresponding ideal sheaf in $\OO_{X_i}$. Assume that $Z_i\cap X_i^{\mathrm{ss}} \neq \emptyset$.
Then the action of $G_i$ on $X_i$ lifts to the standard blow-up $\mathrm{Bl}_{Z_i}(X_i)$ in the ideal sheaf  $\mathcal{I}_{Z_i}$ with linearizations $\widetilde{\L_i}\defeq \pi_i^*\L_i^{\otimes d}\otimes \OO(-E_i)$ by \cite[Section 3]{Kir85}.
Here $d$ is a sufficiently large 
and divisible integer and $E_i$ is the 
exceptional divisor of the blow-up $\pi_i$.
We take GIT-quotients $M(X_i)\defeq X_i/\!/_{\L_i}G_i$ and $M(X_i)^{\K}\defeq \mathrm{Bl}_{Z_i}(X_i)/\!/_{\widetilde{\L_i}}G_i$.
Now the following functoriality holds.
\begin{prop}
\label{prop:functoriallity_kirwan}
Assume that $G_1\subset G_2$ and that there is a closed immersion $\iota: X_1\hookrightarrow X_2$ with $\iota^{-1}(\mathcal{I}_{Z_2}) = \mathcal{I}_{Z_1}$ and $\iota^*\L_2 = \L_1$.
Assume that the group $G_1$ equals  $\Stab_{G_2}(\iota(X_1))$ and that the action of $G_1$ on $X_1$ is equivalent to the action, as a subgroup of $G_2$, on the image $\iota(X_1)$, that is $\iota$ is $G_1$-equivariant.
Finally, assume  that $\iota_{\mathrm{GIT}}:M(X_1)\to M(X_2)$  is a closed immersion.
Then $\iota_{\mathrm{GIT}}$ induces a closed immersion
 \[\iota^{\K}: M(X_1)^{\K}\hookrightarrow M(X_2)^{\K}.\]
 Moreover, if $G\defeq G_1=G_2$ and $\iota_{\mathrm{GIT}}$ is $G$-equivariant, then the induced morphism $\iota^{\K}$ 
 is also $G$-equivariant.
\end{prop}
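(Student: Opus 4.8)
The plan is to lift $\iota$ to a $G_1$-equivariant closed immersion of the blow-ups carrying $\widetilde{\L_2}$ to $\widetilde{\L_1}$, and then to descend to the GIT quotients. For the lift, the hypothesis $\iota^{-1}(\mathcal{I}_{Z_2})=\mathcal{I}_{Z_1}$ says that $Z_1=Z_2\cap\iota(X_1)$ scheme-theoretically, so by the standard compatibility of blowing up with closed immersions the strict transform of $\iota(X_1)$ in $\mathrm{Bl}_{Z_2}(X_2)$ is canonically $\mathrm{Bl}_{Z_1}(X_1)$. This produces a closed immersion $\tilde\iota:\mathrm{Bl}_{Z_1}(X_1)\hookrightarrow\mathrm{Bl}_{Z_2}(X_2)$ with $\pi_2\circ\tilde\iota=\iota\circ\pi_1$. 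Pulling back the inverse image ideal of $Z_2$ and using the same hypothesis gives $\tilde\iota^*\OO(-E_2)=\OO(-E_1)$, whence
\[\tilde\iota^*\widetilde{\L_2}=\pi_1^*(\iota^*\L_2)^{\otimes d}\otimes\OO(-E_1)=\widetilde{\L_1}\]
by $\iota^*\L_2=\L_1$. Since $\iota$ is $G_1$-equivariant and $Z_1$, $Z_2$ are invariant under $G_1\subset G_2$, functoriality of the blow-up makes $\tilde\iota$ a $G_1$-equivariant closed immersion.

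The structural data needed to descend $\tilde\iota$ is exactly that which makes $\iota_{\mathrm{GIT}}:M(X_1)\to M(X_2)$ a closed immersion, now transported to the blow-ups: a $G_1$-equivariant closed immersion with matching linearizations, together with the stabilizer condition $\Stab_{G_2}(\tilde\iota(\mathrm{Bl}_{Z_1}(X_1)))=\Stab_{G_2}(\iota(X_1))=G_1$, which persists because the image lies over $\iota(X_1)$. Writing $M(X_i)^{\K}=\mathrm{Proj}\bigoplus_{k\ge0}H^0(\mathrm{Bl}_{Z_i}(X_i),\widetilde{\L_i}^{\otimes k})^{G_i}$, the morphism $\iota^{\K}$ is the map of Proj's induced by restriction of invariant sections, and it is a closed immersion precisely when these restriction maps are surjective for $k\gg0$.

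Securing this surjectivity in the presence of the blow-up is the main obstacle. Two points require care. First, $\widetilde{\L_i}$ is ample, and the Kirwan picture valid, only for $d$ sufficiently large and divisible, so one fixes a single such $d$ valid for both $i=1,2$ and checks that the two semistable loci correspond under $\tilde\iota$; this reduces to the correspondence of the semistable loci of $X_1$ and $X_2$ together with $Z_1=Z_2\cap\iota(X_1)$. Second, one must pass from surjectivity of restriction on all sections (automatic for $k\gg0$ from the closed immersion $\tilde\iota$ and ampleness, via Serre vanishing) to surjectivity on $G$-invariants; here reductivity combined with $G_1=\Stab_{G_2}(\iota(X_1))$ is what lets one lift $G_1$-invariant sections on the image to $G_2$-invariant sections on $\mathrm{Bl}_{Z_2}(X_2)$, by the same mechanism that underlies the closed immersion $\iota_{\mathrm{GIT}}$. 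This is the heart of the matter, and it is where the two group actions $G_1\subset G_2$ genuinely interact.

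Finally, when $G_1=G_2=G$ every ingredient above --- the strict transform, the linearization $\widetilde{\L_i}$, and the GIT quotient --- is built canonically from the data $(X_i,Z_i,\L_i,G)$. Hence the formation of $\iota^{\K}$ is natural and intertwines any automorphisms compatible on source and target; in particular $\iota^{\K}$ inherits the $G$-equivariance of $\iota_{\mathrm{GIT}}$.
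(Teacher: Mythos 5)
Your construction of the lift $\tilde\iota$ upstairs is fine: the hypothesis $\iota^{-1}(\mathcal{I}_{Z_2}) = \mathcal{I}_{Z_1}$ does identify $\mathrm{Bl}_{Z_1}(X_1)$ with the strict transform of $\iota(X_1)$ in $\mathrm{Bl}_{Z_2}(X_2)$, and the identities $\tilde\iota^*\widetilde{\L_2}=\widetilde{\L_1}$ and the $G_1$-equivariance follow as you say. The gap is in the descent, exactly at the point you yourself call ``the heart of the matter''. You need surjectivity, for $k\gg 0$, of the restriction map $H^0(\mathrm{Bl}_{Z_2}(X_2), \widetilde{\L_2}^{\otimes k})^{G_2} \to H^0(\mathrm{Bl}_{Z_1}(X_1), \widetilde{\L_1}^{\otimes k})^{G_1}$, and for this you offer only ``reductivity combined with $G_1 = \Stab_{G_2}(\iota(X_1))$\dots by the same mechanism that underlies the closed immersion $\iota_{\mathrm{GIT}}$''. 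But reductivity only gives exactness of taking invariants for a \emph{fixed} group, which settles the case $G_1=G_2$; when $G_1 \subsetneq G_2$, a lift of a $G_1$-invariant section has no reason to be $G_2$-invariant, and there is no averaging over $G_2/G_1$ for a positive-dimensional quotient. Moreover, the statement that $\iota_{\mathrm{GIT}}$ is a closed immersion is a \emph{hypothesis} of the proposition, not a result with an internal mechanism you can transplant to the blow-ups; to actually use it, you must express the Kirwan blow-ups in terms of data living on $M(X_1)\hookrightarrow M(X_2)$. As written, the crucial step is asserted, not proved.

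This missing step is precisely what the paper's proof supplies, via Kirwan's Lemma 3.11 of \cite{Kir85}: for $r\gg 0$ the Kirwan blow-up $M(X_i)^{\K}$ is identified with the blow-up $\mathrm{Bl}_{(\mathcal{I}_{Z_i}^r)^{G_i}}(M(X_i))$ of the GIT quotient along the descended invariant ideal sheaf (in general not a blow-up along a smooth or reduced centre). With this identification the whole problem moves downstairs, where the hypotheses can be applied directly: from $\iota^{-1}(\mathcal{I}_{Z_2}^r)=\mathcal{I}_{Z_1}^r$, from the fact that $(\mathcal{I}_{Z_i}^r)^{G_i}$ cuts out $Z_i/\!/G_i$, and from the assumption that $\iota_{\mathrm{GIT}}$ is a closed immersion, one deduces $\iota_{\mathrm{GIT}}^{-1}\bigl((\mathcal{I}_{Z_2}^r)^{G_2}\bigr)=(\mathcal{I}_{Z_1}^r)^{G_1}$; then the compatibility of blowing up with closed immersions (the same strict-transform argument you used upstairs, now applied on the quotients) yields the closed immersion $M(X_1)^{\K}\hookrightarrow M(X_2)^{\K}$. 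So your outline can be repaired, but only by inserting Kirwan's identification (or re-proving it), which is the actual content of the argument; the equivariant lift $\tilde\iota$ you constructed upstairs is not what carries the proof.
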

\begin{proof}
By \cite[Lemma 3.11]{Kir85}, we can identify the (partial) Kirwan blow-up $M(X_i)^{\K}$  
with the  blow-up $\mathrm{Bl}_{(\mathcal{I}_{Z_i}^r)^{G_i}}(M(X_i))$ of the GIT quotient for sufficiently large $r$.
Note that this is not a standard blow-up in general.
Now, by our assumption on the ideal sheaves, for such an $r$, we have $\iota^{-1}(\mathcal{I}_{Z_2}^r) = \mathcal{I}_{Z_1}^r$.
The ideal sheaves $(\mathcal{I}_{Z_i}^r)^{G_i}$ define the closed subschemes $Z_i/\!/G_i$, as in the proof of \cite[Lemma 3.11]{Kir85}.
Since $\iota_{\mathrm{GIT}}$ is a closed immersion, this also holds for  $Z_1/\!/G_1$, 
which forces $\iota^{-1}((\mathcal{I}_{Z_2}^r)^{G_2}) = (\mathcal{I}_{Z_1}^r)^{G_1}$.
Hence, the universality of the blow-up implies that there is a closed immersion $\mathrm{Bl}_{(\mathcal{I}_{Z_1}^r)^{G_1}}(M(X_1))\to \mathrm{Bl}_{(\mathcal{I}_{Z_2}^r)^{G_2}}(M(X_2))$.
\end{proof}
\begin{rem}
    Using this approach, we can, under the assumptions stated, deduce the functoriality of the Kirwan blow-ups immediately, provided there is only one step for the blow-up. This includes the Deligne-Mostow varieties.
    In the general case, it can be achieved by induction, but this needs careful discussion. We would like to emphasize that, in general, the Kirwan blow-up is not functorial.
    In particular, the functoriality of the Kirwan blow-up often fails for finite quotients, as we shall discuss below. 
\end{rem}
Now, we apply this result to our cases.
For pairs $(w,S) \prec (w',S')$, Proposition \ref{prop:partial order} assures the existence of a GIT-theoretical closed immersion
\[\phi_{w,S}^{w',S'}: M_{w,S}^{\mathrm{G}} \hookrightarrow M_{w',S'}^{\mathrm{G}}\]
which is also a closed immersion on the polystable locus.
This implies that these varieties satisfy the assumption in Proposition \ref{prop:functoriallity_kirwan} and hence there is a closed immersion
    \[f^{w',S'}_{w, S}:M_{w,S}^{\K}\hookrightarrow  M_{w',S'}^{\K},\]
lifting $\phi_{w,w'}$.

\begin{rem}\label{rem:finitequotientsK}
Let $(w,S)$ be a Deligne-Mostow pair.
We have a natural finite map 
\[\varphi_w:M_w^{\mathrm{G}}\to M_w^{\mathrm{G}}/S[w]\cong M_{w,S}^{\mathrm{G}}.\]
In this situation, there are Kirwan blow-ups $M_w^{\K}$ and $M_{w,S}^{\K}$ on both sides.
However, we cannot apply Proposition \ref{prop:functoriallity_kirwan} to these spaces because $\varphi_w$ is not a closed immersion (it is obviously not injective).
Nevertheless, combining Theorem \ref{mainthm:main_extendability} with \cite[Theorem 1.1]{GKS21}, if $(w,S)$ satisfies condition (T), then one can verify that there exists a morphism between the corresponding Kirwan blowups, which is compatible with the morphism $\varphi_w$.
By contrast, when condition (T) fails, a similar discussion reveals that no such morphism can exist.
For the case of $(w_{\mathrm{G}},\mathbb{N}_8)$ and $(w_{\mathrm{E}},\mathbb{N}_{12})$, these phenomena were also discussed in \cite{HM25, HKM24}. The case of $8$ points on $\P^1$, for instance, shows that the Kirwan blow-up is, contrary to the 
toroidal compactification, not compatible with the $S_8$-action.
\end{rem}

\subsection{Reduction method}
\label{subsec:SB method}
In this subsection, we will discuss how different Deligne-Mostow varieties whose parameters satisfy the partial order  $(w, S) \prec (w', S')$ are related. 
We start with the relationship between $\prec$,  $(\mathrm{T})$, and GIT quotients. 
\begin{prop}
\label{prop:partial order}
Let us consider two pairs $(w, S), (w', S')\in A_{\mathrm{DM}}$.
    \begin{enumerate}
            \item Assume thar $(w, S) \prec (w', S')$.
        Then, condition $(\mathrm{T})$ holds for $(w,S)$ if and only if it holds for $(w',S')$.
        \item Two pairs are ordered as $(w, S) \prec (w', S')$ if and only if there is a closed immersion
    \[\phi_{w,S}^{w',S'}: M_{w,S}^{\G}\to M_{w',S'}^{\G}\]
    arising naturally from the moduli interpretation  (i.e., the stratification of the collision of points), and taking the quotient of the closed immersion
        \[M_w^{\G}\to M_{w'}^{\G}\]
        by $S[w]$ and $S'[w']$.
    \end{enumerate}
\end{prop}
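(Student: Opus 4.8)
The plan is to treat the two parts separately, deriving (1) as a direct combinatorial consequence of the definitions and (2) via the moduli-theoretic interpretation of the GIT quotients as configuration spaces of (partially ordered) points on $\P^1$, together with the behaviour of stability under collisions.

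For part (1), I would argue purely combinatorially. Recall that $(\mathrm{T})$ is the negation of the existence of sets $T_1 \subset S$, $T_2 \subset S^c$ with $|T_1| \geq 3$ and $\sum_{i \in T_1 \sqcup T_2} w_i = 1$. Given $(w,S) \prec (w',S')$, the defining conditions $|S| = |S'|$, $w(S) = w'(S')$, together with $n \leq n'$ and $w_i' \leq w_i$, allow me to pass between configurations by adding or deleting points. The key observation is that a subset $T_1 \subset S$ witnessing the failure of $(\mathrm{T})$ on the smaller pair corresponds, via the identification of the common weight $w(S) = w'(S')$ on the ordered part, to a subset $T_1' \subset S'$ of the same cardinality and weight; the complementary sum $1 - \sum_{i \in T_1} w_i$ that must be realised by a $T_2 \subset S^c$ (resp. $T_2' \subset (S')^c$) is the same number on both sides. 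The main point to verify carefully is that a collection $T_2'$ summing to the required value exists on the larger side precisely when it does on the smaller side; here I expect to use that the extra points in $w'$ are exactly those obtained by splitting or adding weights under $\prec$, so that any partial sum achievable among the unordered complement is preserved. This will establish the equivalence in both directions.

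For part (2), the forward direction is where the geometry enters. Assuming $(w,S) \prec (w',S')$, I would first construct the closed immersion on the \emph{ordered} level $M_w^{\G} \hookrightarrow M_{w'}^{\G}$ arising from the stratification by collisions: the inequalities $w_i' \leq w_i$ and $n \leq n'$ mean that a $w$-stable configuration of $n$ points can be viewed as a boundary stratum of $w'$-stable configurations of $n'$ points where the surplus points have collided onto existing ones, and the condition $\sum w_i = \sum w_i' = 2$ keeps us within the GIT-stable locus. This is the standard moduli interpretation, and I would cite the GIT description in \cite[Chapter 11]{Dol03} or \cite[Chapter 7]{Muk12} to see that such a collision stratum is closed. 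The conditions $|S| = |S'|$ and $w(S) = w'(S')$ ensure that the symmetric group actions $S[w] = \Sigma_{|S|}$ and $S'[w'] = \Sigma_{|S'|}$ match up, so that taking the quotient by these groups yields the desired $\phi_{w,S}^{w',S'}: M_{w,S}^{\G} \to M_{w',S'}^{\G}$, and that this remains a closed immersion since taking the quotient of a closed immersion of varieties by compatible actions of the \emph{same} finite group preserves closedness.

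For the converse in part (2), I would argue that the existence of a closed immersion $\phi_{w,S}^{w',S'}$ respecting the moduli interpretation forces the numerical conditions of Definition \ref{defn:weights}: the dimensions give $n - 3 \leq n' - 3$, hence $n \leq n'$; the compatibility of the symmetric quotients forces $|S| = |S'|$; and the requirement that the stability chambers nest, so that $w$-stable configurations embed as collision strata of $w'$-stable ones, translates into the weight inequalities $w_i' \leq w_i$ together with $w(S) = w'(S')$. The main obstacle I anticipate is the careful bookkeeping in matching the \emph{unordered} blocks under the $\Sigma_{|S|}$-action when passing between the ordered and partially ordered pictures: one must ensure that the closed immersion descends to the quotients compatibly and that no stratum is collapsed, which is exactly where the hypothesis $w(S) = w'(S')$ (equal weights on the symmetrised block) is essential. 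Once this matching is in place, both directions follow from the standard theory of variation of GIT and the functoriality of GIT quotients under the collision stratification.
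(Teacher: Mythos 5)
Your part (2) follows essentially the same route as the paper: the forward direction by realizing $M_{w,S}^{\G}$ as a collision stratum of $M_{w',S'}^{\G}$ and descending the $\Sigma_{|S|}$-equivariant closed immersion to the quotients, and the converse by the dimension count giving $n\le n'$, the comparison of Hassett-type stability conditions giving $w'_i\le w_i$, and compatibility of the symmetric-group quotients giving $|S|=|S'|$ and $w(S)=w'(S')$. Your treatment of the last two conditions is vaguer than the paper's (which argues: if $|S|>|S'|$ the induced map is not well defined, if $|S|<|S'|$ it is not injective, and if $w(S)>w'(S')$ then ordered points would map to unordered points, contradicting $|S|=|S'|$), but the substance is the same.

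Part (1) is where there is a genuine gap, and it sits exactly at the step you yourself flag as ``the main point to verify carefully''. Matching $T_1\subset S$ with $T'_1\subset S'$ is immediate from $|S|=|S'|$ and $w(S)=w'(S')$ (this is all the paper's one-sentence proof invokes). But your proposed mechanism for matching the complementary sets --- that ``any partial sum achievable among the unordered complement is preserved'' because the extra points of $w'$ arise by splitting --- only works in one direction: given $T_2\subset S^c$, you may replace each of its points by the group of points of $w'$ that collide to form it, so a witness against $(\mathrm{T})$ passes \emph{up} from $(w,S)$ to $(w',S')$. Going \emph{down}, a subset $T'_2\subset (S')^c$ realizing the required sum need not be a union of collision groups, and then it does not descend. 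Concretely, take $w'=\frac{1}{6}(2,2,2,2,1,1,1,1)$ with $S'$ the four points of weight $\frac{1}{6}$, and $w=\frac{1}{6}(4,4,1,1,1,1)$ with $S$ the four points of weight $\frac{1}{6}$; both occur in the paper's tables, and $(w,S)\prec(w',S')$ for Definition \ref{defn:weights} as literally stated. Then $T'_1=S'$ together with $T'_2$ consisting of one point of weight $\frac{1}{3}$ gives total weight $1$, whereas on the smaller side the needed complementary sums are $\frac{1}{2}$ (for $|T_1|=3$) or $\frac{1}{3}$ (for $|T_1|=4$), and the subset sums of $S^c=\{\frac{2}{3},\frac{2}{3}\}$ are only $0,\frac{2}{3},\frac{4}{3}$. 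So the ``precisely when'' cannot be established by the splitting argument, and this direction needs an input beyond it (note that the paper's own proof of (1) is silent on this direction as well, so your instinct that this is the delicate point is well placed --- but your proposal does not close it, and the suggested route cannot).
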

\begin{proof}
(1) This follows from the fact $|S| = |S'|$ and $w(S) = w'(S')$.

(2) Let us now assume that we have a closed immersion $\phi_{w,S}^{w',S'}$.
This implies that $M_{w,S}^{\G}$ is a stratum where points in $M_{w',S'}^{\G}$ collide.
The dimension of the right-hand side must be at least 
that of the left-hand side, which implies the first condition on $n$ and $n'$, namely $n \leq n'$.
From the perspective of the stratification structure, the conditions of the moduli problem \cite[Section 2]{Has03} imply that the stability condition for $M_{w',S'}^{\G}$ must be 
stronger, that is more restrictive, than for $M_{w,S}^{\G}$.
(Note that a similar argument applies when taking the quotient by the symmetric group; see \cite[Remark 4.4]{KM11} for unordered cases.)
This implies that $w'_i \leq w_i$ for all $i$.
If $|S| >|S'|$, then there is no well-defined  induced map $\phi_{w,S}^{w',S'}$, while if $|S| <|S'|$, then this is not injective. Hence $|S|=|S'|$.
Since $w'_i \leq w_i$, we have in particular that $w(S) \geq w'(S')$.
If $w(S) > w'(S')$, then some ordered points in $M_{w,S}^{\G}$ are mapped to unorderend points in $M_{w',S'}^{\G}$, but this contradicts $|S|=|S'|$.
Conversely assuming the ordering of Deligne-Mostow pairs, from the above discussion, we obtain an inclusion between GIT quotients. This inclusion corresponds to a stratum determined by collisions of points by the weight datum, which in turn induces a closed immersion.
\end{proof}

We define an open subset 
\begin{align}
    \label{defeq:U_w}
U_{w,S}\defeq\{(p_1,\cdots,p_n)\in M_{w,S}^{\G}\mid p_j\neq p_k\ \mathrm{if}\ j,k\in S\}
\end{align}
so that the restriction of $\phi^{w',S'}_{w,S}$ to $U_{w,S}$ is an isomorphism onto its image.
The closed subscheme $D_{w,S}\defeq M_{w,S}^{\G}\setminus U_{w,S}$ generalizes (the closure of) the discriminant divisors in the usual sense.
In fact, the images of the spaces $M_{w,S}^{\G}$ correspond to the collision loci of points on $\P^1$ in the sense of \cite{Dor04, DDH18}.
Let $T_{w,S}^{\K}$ (resp. $T_{w,S}^{\T}$) be the union of the exceptional divisors
 of the blow-up $M_{w,S}^{\K}\to M_{w,S}^{\G}$ (resp. $X_{w,S}^{\T}\to X_{w,S}^{\BB}$).
We denote by $\widetilde{D_{w,S}}^{\K}$ and $\widetilde{D_{w,S}}^{\T}$ the strict transforms of $D_{w,S}$ via these two blow-ups.
Here, by the abuse of notation, $D_{w,S}$ is identified with a divisor on the ball quotient $X_{w,S}^{\BB}$ through the isomorphism (\ref{isom:compactified_sigma_int}).

\begin{lem}[{\cite[Lemma 5.4]{YZ20}}]
\label{lem:YZ}
    Let $f_1:Z_1\to Y$ and $f_2:Z_2\to Y$ be finite morphisms between irreducible algebraic varieties.
    Suppose $Z_1, Z_2$ are normal.
    Moreover, suppose that there exist Zariski-open subsets $U_i$ of $Z_i$, $i=1$ or $2$, with a biholomorphic map $g:U_1\to U_2$, such that $f_1 = f_2 \circ g$.
    Then $g$ extends to an algebraic isomorphism $Z_1\to Z_2$.
\end{lem}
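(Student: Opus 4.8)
The plan is to reduce the statement to the uniqueness of normalization. First I would note that, since $Z_1$ and $Z_2$ are irreducible, the Zariski-open subsets $U_1$ and $U_2$ are automatically dense, so $g$ is a birational map $Z_1\dashrightarrow Z_2$. Next I would record the structural fact that, because $f_i\colon Z_i\to Y$ is finite and $Z_i$ is normal, each $Z_i$ is nothing but the normalization of $Y$ in its function field $K(Z_i)$. Indeed, over any affine open $\Spec A\subseteq Y$ one has $f_i^{-1}(\Spec A)=\Spec B_i$ with $B_i$ a normal domain, finite (hence integral) over $A$, and with $\Frac(B_i)=K(Z_i)$; integrality gives $B_i\subseteq\widetilde A$ and normality gives $\widetilde A\subseteq B_i$, where $\widetilde A$ denotes the integral closure of $A$ in $K(Z_i)$, so that $B_i=\widetilde A$.

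Granting for the moment that $g$ is algebraic (the point I discuss below), the relation $f_1=f_2\circ g$ says that $g$ is a morphism over $Y$ on the open sets, hence induces an isomorphism $g^{*}\colon K(Z_2)\to K(Z_1)$ of function fields which restricts to the identity on $K(Y)$. Since each $Z_i$ is the normalization of $Y$ in $K(Z_i)$, the uniqueness and functoriality of normalization applied to the $K(Y)$-isomorphism $g^{*}$ produce a $Y$-isomorphism $Z_1\cong Z_2$ restricting to $g$ on $U_1$, which is the desired extension. Concretely, the same conclusion can be reached through Zariski's Main Theorem: I would form the fibre product $Z_1\times_Y Z_2$, let $W$ be the closure of the graph of $g$, and pass to its normalization $\widetilde W$. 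Both projections $\widetilde W\to Z_1$ and $\widetilde W\to Z_2$ are finite (as base changes of $f_2$ and $f_1$) and birational (they are isomorphisms over $U_1$ and $U_2$ respectively), and a finite birational morphism onto a normal variety is an isomorphism; hence $Z_1\cong\widetilde W\cong Z_2$.

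The hard part will be the passage from the \emph{biholomorphic} map $g$ to an algebraic isomorphism, i.e.\ the claim that $g^{*}$ sends rational functions to rational functions, or equivalently that the a priori only analytic graph $\Gamma_g$ is contained in an algebraic subvariety $W$ of $Z_1\times_Y Z_2$ of the correct dimension. Here the finiteness of $f_1$ and $f_2$ is exactly what I expect to rescue the argument: over a dense open $Y^{\circ}\subseteq Y$ above which $f_1$ and $f_2$ are finite étale (automatic in characteristic zero on a suitable open set), the restrictions of $U_1$ and $U_2$ are finite covering spaces of $Y^{\circ}$, and $g$ is an isomorphism of these covers over $Y^{\circ}$. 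Since an isomorphism of finite étale covers of an algebraic variety is automatically algebraic (Riemann existence / the analytic–algebraic comparison for finite covers), $\Gamma_g$ is the analytification of an algebraic subvariety, and its Zariski closure $W$ therefore has the same dimension as $Z_1$. With this algebraicity secured, the projections of the previous paragraph are genuinely finite and birational, and the extension of $g$ to an algebraic isomorphism $Z_1\to Z_2$ follows at once.
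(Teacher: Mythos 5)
The paper itself gives no proof of this lemma: it is imported verbatim from Yu--Zheng \cite[Lemma 5.4]{YZ20}, so there is no internal argument to compare against, and your proposal must be judged on its own merits. Its architecture is sound, and you correctly identify the actual crux: the only nontrivial point is the algebraicity of the a priori merely holomorphic map $g$, after which the conclusion follows either from uniqueness of the normalization of $Y$ in $K(Z_1)\cong K(Z_2)$ or from your graph-closure argument (finite $+$ birational onto a normal variety $\Rightarrow$ isomorphism, by Zariski's Main Theorem). Your route to algebraicity via Riemann existence for finite étale covers is legitimate. For comparison, one can also get algebraicity by a pure dimension count, avoiding Riemann existence: the graph $\Gamma_g$ is an irreducible closed analytic subset of $U_1\times_Y U_2$ of dimension $\dim Z_1$, hence lies in an irreducible component $W$ of $Z_1\times_Y Z_2$; since $W\to Z_1$ is finite, $\dim W=\dim Z_1$, which forces $\Gamma_g=W\cap(U_1\times_Y U_2)$ to be Zariski-open and dense in $W$, and then both projections from $W$ are finite and birational. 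This is the more economical argument and only uses finiteness of the $f_i$.

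Two steps in your write-up need repair, both fixable from the hypotheses. First, you tacitly assume the $f_i$ are dominant: without this, $K(Y)\subseteq K(Z_i)$ fails, ``generic étaleness'' over a dense open of $Y$ is vacuous, and the dimension counts break. The fix is one line: $f_1=f_2\circ g$ on dense opens together with closedness of finite morphisms gives $f_1(Z_1)=f_2(Z_2)$, so one may replace $Y$ by this common image. Second, and more seriously, your claim that over the locus $Y^{\circ}$ where $f_1,f_2$ are finite étale ``the restrictions of $U_1$ and $U_2$ are finite covering spaces of $Y^{\circ}$'' is false as literally stated: an open subset of a finite cover need not be proper over the base (take $Z_1=Z_2=Y$, $f_i=\mathrm{id}$, $U_i=Y\setminus\{\mathrm{pt}\}$, $g=\mathrm{id}$; the étale locus is all of $Y$). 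You must shrink $Y^{\circ}$ further so that it also avoids $f_1(Z_1\setminus U_1)\cup f_2(Z_2\setminus U_2)$; these are proper closed subsets of $Y$ precisely because the $f_i$ are finite (hence closed, with $\dim f_i(Z_i\setminus U_i)<\dim Y$ once surjectivity is arranged), and then $f_i^{-1}(Y^{\circ})\subseteq U_i$, so the covers entering your Riemann-existence step really are full preimages and $g$ restricts to an isomorphism between them. With these repairs, plus the closing observation that the resulting algebraic isomorphism agrees with $g$ on all of $U_1$ (not just on $f_1^{-1}(Y^{\circ})$) by the identity theorem on the irreducible complex space $U_1$, your argument is complete.
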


As an application of this lemma, we can obtain the following key method.
   Let $V_{w,S}\subset X_{w,S}$ be the open ball quotient so that we have $U_{w,S}\cong V_{w,S}$, where
   $U_{w,S}$ is the complement of the discriminant divisors as defined in (\ref{defeq:U_w}). 
   
\begin{thm}[{\cite[Theorem 1.1]{GKS21}}]
\label{thm:SB_method}
Let $(w',S')\prec (w,S)$ be Deligne-Mostow pairs.
Assume that there is no isomorphism between $M_{w',S'}^{\K}$ and $X_{w',S'}^{\T}$ extending the Deligne-Mostow isomorphism $M_{w',S'}^{\G}\cong X_{w',S'}^{\BB}$.
Then, the Deligne-Mostow isomorphism $M_{w,S}^{\G}\cong X_{w,S}^{\BB}$ does not  extend to an isomorphism between $M_{w,S}^{\K}$ and $X_{w,S}^{\T}$.
\end{thm}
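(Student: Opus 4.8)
The plan is to argue by contradiction, pushing a hypothetical isomorphism for the larger pair $(w,S)$ down to the smaller pair $(w',S')$ along the functorial closed immersions that the relation $(w',S')\prec(w,S)$ provides, and then recognizing the resulting map as the forbidden extension by means of Lemma \ref{lem:YZ}.

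\emph{Step 1: assemble the functorial maps.} Since $(w',S')\prec(w,S)$, Proposition \ref{prop:partial order}(2) yields a closed immersion $\phi_{w',S'}^{w,S}\colon M_{w',S'}^{\G}\hookrightarrow M_{w,S}^{\G}$ realizing the smaller GIT quotient as a collision stratum, and Proposition \ref{prop:functoriallity_kirwan} lifts it to a closed immersion $f_{w',S'}^{w,S}\colon M_{w',S'}^{\K}\hookrightarrow M_{w,S}^{\K}$. Transporting $\phi_{w',S'}^{w,S}$ through the isomorphisms \eqref{isom:compactified_sigma_int} gives an embedding of ball quotients $X_{w',S'}^{\BB}\hookrightarrow X_{w,S}^{\BB}$; since the toroidal compactification of a ball quotient is governed by fans consisting of a single ray (as used in the proof of Theorem \ref{thm:Borel extension}), this lifts canonically to a closed immersion $\iota^{\T}\colon X_{w',S'}^{\T}\hookrightarrow X_{w,S}^{\T}$.

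\emph{Step 2: transfer the isomorphism.} Suppose, for contradiction, that the Deligne-Mostow isomorphism $M_{w,S}^{\G}\cong X_{w,S}^{\BB}$ extends to an isomorphism $\Psi\colon M_{w,S}^{\K}\stackrel{\sim}{\longrightarrow}X_{w,S}^{\T}$. I would then apply Lemma \ref{lem:YZ} with $Y=X_{w,S}^{\T}$, taking $Z_1=M_{w',S'}^{\K}$ with the finite morphism $f_1\defeq\Psi\circ f_{w',S'}^{w,S}$, and $Z_2=X_{w',S'}^{\T}$ with $f_2\defeq\iota^{\T}$. Both $f_1$ and $f_2$ are closed immersions, hence finite, and $Z_1$, $Z_2$ are normal and irreducible, carrying only finite quotient singularities. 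On the dense open loci $U_1\subset M_{w',S'}^{\K}$ and $U_2\subset X_{w',S'}^{\T}$ lying over the open ball quotient $V_{w',S'}\cong U_{w',S'}$ (cf. \eqref{defeq:U_w}), over which both blow-ups $M_{w',S'}^{\K}\to M_{w',S'}^{\G}$ and $X_{w',S'}^{\T}\to X_{w',S'}^{\BB}$ are isomorphisms, the smaller Deligne-Mostow isomorphism furnishes a biholomorphism $g\colon U_1\to U_2$. Granting the compatibility $f_1|_{U_1}=f_2\circ g$, Lemma \ref{lem:YZ} extends $g$ to an algebraic isomorphism $M_{w',S'}^{\K}\cong X_{w',S'}^{\T}$, which by construction extends the Deligne-Mostow isomorphism on $V_{w',S'}$. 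This is precisely the extension excluded by hypothesis, so the contradiction proves the theorem.

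\emph{Main obstacle.} The step deserving the most care is the identity $f_1|_{U_1}=f_2\circ g$, i.e. that restricting the large isomorphism $\Psi$ to the image of the small ball quotient reproduces the small period map followed by the toroidal embedding $\iota^{\T}$. Here the image $f_{w',S'}^{w,S}(U_1)$ lies in the strict transform $\widetilde{D_{w,S}}^{\K}$ of the discriminant, $\Psi$ carries it into $\widetilde{D_{w,S}}^{\T}$, and one must identify this with the sub-ball quotient $V_{w',S'}\hookrightarrow X_{w,S}^{\T}$ coming from $\iota^{\T}$. This is exactly the functoriality of the Deligne-Mostow period construction under collisions of points in the sense of Doran \cite{Dor04, DDH18}: the sub-ball quotient is cut out compatibly on the configuration-space side and on the period side, so that the two a priori distinct maps $U_1\to X_{w,S}^{\T}$ coincide. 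Once this coherence is in place, the normality and finiteness hypotheses of Lemma \ref{lem:YZ} are immediate and the argument closes formally.
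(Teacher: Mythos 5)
Your proposal is correct and follows essentially the same route as the paper's proof: a contradiction argument that lifts the closed immersion of GIT quotients to the Kirwan blow-ups via Proposition \ref{prop:functoriallity_kirwan}, then applies Lemma \ref{lem:YZ} with exactly the same assignment ($Z_1=M_{w',S'}^{\K}$, $Z_2=X_{w',S'}^{\T}$, $U_1\cong U_{w',S'}$, $U_2\cong V_{w',S'}$, $Y=M_{w,S}^{\K}\cong X_{w,S}^{\T}$) to extend the small Deligne--Mostow isomorphism, contradicting the hypothesis. The only difference is presentational: you spell out the toroidal-side map $\iota^{\T}$ and isolate the commutativity $f_1|_{U_1}=f_2\circ g$ as the key compatibility, which the paper subsumes under ``the commutativity of the diagram'' in Figure \ref{fig:com_diag_sb_proof}.
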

\begin{proof}
This is very close to the proof of \cite[Theorem 1.1]{GKS21} which works via a reduction to the ancestral cases but for the reader's sake we still thought it helpful to provide some details.   
Suppose that there is an isomorphism $M^{\K}_{w,S}\cong X_{w,S}^{\T}$ commuting with the isomorphism $U_{w,S}\cong V_{w,S}$.
By Proposition \ref{prop:functoriallity_kirwan}, there is a closed immersion $M_{w',S'}^{\mathrm{K}}\hookrightarrow M_{w,S}^{\mathrm{K}}$ compatible with a morphism between GIT quotients.
Hence, Proposition \ref{prop:functoriallity_kirwan} assures the existence of a closed immersion, and in particular a finite morphism, 
$M_{w',S'}^{\K} \to M_{w,S}^{\K}$ compatible with the moduli theoretic morphism $M_{w',S'}^{\G} \to M_{w,S}^{\G}$.
Then, by taking $M_{w',S'}^{\K}$, $X_{w',S'}^{\T}$, $U_{w',S'}$, $V_{w',S'}$ and $M^{\K}_{w,S}\cong X_{w,S}^{\T}$ as $Z_1$, $Z_2$, $U_1$, $U_2$, and $Y$ in Lemma \ref{lem:YZ}, the commutativity of the diagram implies that the isomorphism $M^{\K}_{w,S}\cong X_{w,S}^{\T}$ can be extended to an isomorphism $M^{\K}_{w',S'}\cong X_{w',S'}^{\T}$, making the diagram (Figure \ref{fig:com_diag_sb_proof}) commutative.
This, however, contradicts the assumption for $(w',S')$.
\begin{figure}[h]
\centering
\[
  \begin{tikzpicture}[
    labelsize/.style={font=\scriptsize},
    isolabelsize/.style={font=\normalsize},
    hom/.style={->,auto,labelsize},
    scale=1,
  ]
  
  % nodes
  \node(Uw') at (8,0) {$U_{w',S'}$};
  \node(Vw') at (12,0) {$V_{w',S'}$};
  \node(Mw'K) at (8,4) {$M_{w',S'}^{\K}$};
  \node(Xw't) at (12,4) {$X_{w',S'}^{\T}$};
  \node(Uw) at (0,2) {$U_{w,S}$};
  \node(Vw) at (4,2) {$V_{w,S}$};
  \node(MwK) at (0,6) {$M_{w,S}^{\K}$};
  \node(Xwt) at (4,6) {$X_{w,S}^{\T}$};
  
  % isomorphisms
  \draw[hom,swap] (Uw') to
    node[align=center,yshift=-2pt]{}
    node[swap]{\normalsize $\sim$}
  (Vw');

  \draw[hom] (Uw) to
    node[align=center]{}
    node[swap]{\normalsize$\sim$}
  (Vw);

  % morphisms
  \draw[hom] (Mw'K) to node{} (MwK);
  \draw[hom] (Xw't) to node{} (Xwt);
  % vertical
  \draw[hom,dashed](Mw'K) to node[]{} (Xw't);
  \draw[right hook-latex](Uw') to node[]{} (Mw'K);
  \draw[right hook-latex](Vw') to node[]{} (Xw't);
  \draw[hom,dashed](MwK) to node[]{} (Xwt);
  \draw[right hook-latex](Uw) to node[]{} (MwK);
  \draw[right hook-latex](Vw) to node[]{} (Xwt);
  \draw[right hook-latex] (Uw') to node{} (Uw);
  \draw[right hook-latex] (Vw') to node{} (Vw);
  
  \end{tikzpicture}
\]
\caption{Reduction method}
\label{fig:com_diag_sb_proof}
\end{figure}
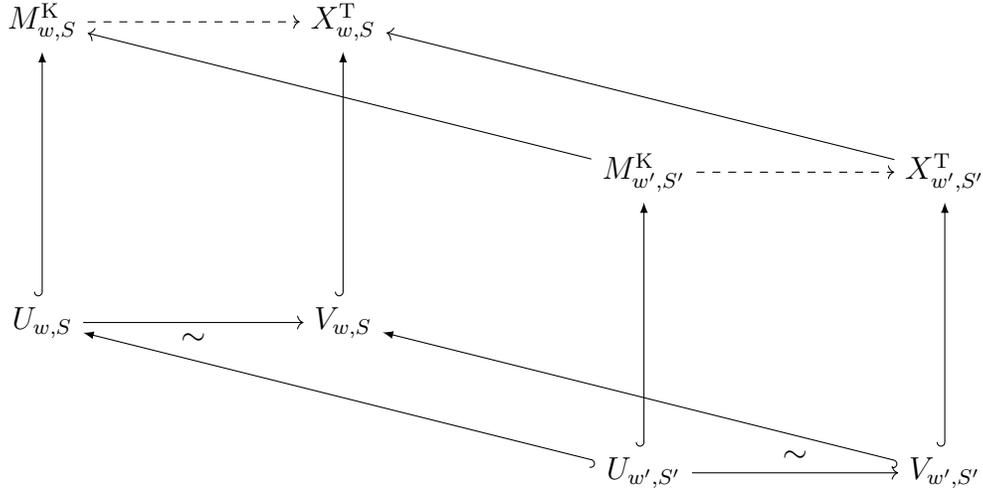
\end{proof}

Theorem \ref{thm:SB_method} opens up a new way of proving Theorem \ref{mainthm:main_extendability}.
Recall that the main question is to decide whether the Deligne-Mostow morphism
$M_{w,S}\to X_{w,S}$ extends to an isomorphism between $M^{\K}_{w,S} \cong X_{w,S}^{\T}$ between the Kirwan blow-up and the toroidal compactification. If this holds, then we say that 
 $(\mathrm{E})$ is satisfied. 
 Theorem \ref{mainthm:main_extendability} now says that this is the case if and only if  $(\mathrm{T})$ holds. This provides an easy numerical criterion, which, moreover is
 preserved under $\prec$. 
 
 In the proof, we considered all Deligne-Mostow varieties at the same time, which required us to distinguish different geometric situations.
 A consequence of Theorem \ref{thm:SB_method} is that one can reduce the proof to the study of extremal cases. Indeed, the theorem says the following: if $(\mathrm{E})$ fails for $(w',S')$ and 
 $(w'S')\ \prec (w,S)$, then  $(\mathrm{E})$ also fails for $(w,S)$. 
Conversely, if  $(\mathrm{E})$ holds for $(w,S)$ and $(w'S')\ \prec (w,S)$, then $(\mathrm{E})$ also holds for $(w',S')$. This can be summarised by: 
\begin{redm}
\label{redmethodsub}
In order to prove Theorem  \ref{mainthm:main_extendability}, it is enough to prove this theorem for \it{extremal}, i.e., \it{minimal} or \it{maximal} Deligne-Mostow varieties. More precisely, let $(w,S)\in A_{\mathrm{DM}}$ be a Deligne-Mostow pair. Then there exists (at least) one minimal pair  $(w_{\min},S_{\min})$ with $(w_{\min},S_{\min}) \prec (w,S)$ and (at least) one  maximal pair $(w_{\max},S_{\max})$ with $(w,S) \prec (w_{\max},S_{\max})$.
If $(\mathrm{E})$ fails for $(w_{\min},S_{\min})$, then it also also fails for $(w,S)$. Similarly, if $(\mathrm{E})$ holds for $(w_{\max},S_{\max})$, then it also holds for $(w,S)$.  
\end{redm}

We close this section with concrete examples, including the cases treated in \cite{GKS21, HKM24, HM25}.

\begin{ex}[{Maximal case}]
Here we shall discuss a case where we can prove the existence of a natural isomorphism between the Kirwan blow-up and the toroidal compactification by reduction to a maximal Deligne-Mostow variety.
For this, we consider the Gaussian case with the data
\[ w\defeq \left(\frac{1}{2},\frac{1}{4},\frac{1}{4},\frac{1}{4},\frac{1}{4},\frac{1}{4},\frac{1}{4}\right),\quad S\defeq \{2,3\}\subset\mathbb{N}_7.\]
Then, $(w, S)\prec (w_{\mathrm{G}}, S')$ where $S'\defeq\{2,3\} \subset \mathbb{N}_8$.
The pair $(w_{\mathrm{G}}, S)$ satisfies  $(\mathrm{T})$ because $|S|=2$ and hence it is impossible to claim the existence of $T_1$ with $|T_1| \geq 3$ (and similarly for $(w, S)$).
Hence, Theorem \ref{thm:SB_method} shows that Theorem \ref{mainthm:main_extendability} for $(w, S)$ follows from the corresponding statement for $(w_{\mathrm{G}}, S)$.
This is the case where (\ref{mor:unordered_period_map}) does extend to an isomorphism between the Kirwan blow-up and the toroidal compactification.
\end{ex}

\begin{ex}[{Minimal case}]
    Let us consider the Eisenstein case with the data
\[ w\defeq \left(\frac{1}{3},\frac{1}{3},\frac{1}{3},\frac{1}{3},\frac{1}{3},\frac{1}{3}\right),\quad S\defeq \{3,4,5,6\}\subset\mathbb{N}_6.\]
Then, we have $(w', S')\prec (w, S)$ 
where 
\[w'\defeq \left( \frac{2}{3},\frac{1}{3},\frac{1}{3},\frac{1}{3},\frac{1}{3}\right),\quad S'\defeq\{2,3,4,5\}\subset \mathbb{N}_5.\]
The pair $(w', S')$ does not satisfy  $(\mathrm{T})$ because we can take $T_1=\{2,3,4\}\subset S$ and $T_2=\emptyset\subset S^c$.
Hence, Theorem \ref{thm:SB_method} implies that Theorem \ref{mainthm:main_extendability} for $(w, S)$ follows from the corresponding statement for $(w', S')$.
Furthermore, we can check that $(w',S')$ is in fact a minimal element in $A_{\mathrm{DM}}$; see Table \ref{table:DM Eisenstein}.
This is the case that (\ref{mor:unordered_period_map}) does not extend to a natural morphism between the Kirwan blow-up and the toroidal compactification.
\end{ex}
    Note that the cases treated in \cite{HM25} (the Gaussian case) and \cite{HKM24} (the Eisenstein case), corresponding to $(w_{\mathrm{G}}, \mathbb{N}_8)$ and $(w_{\mathrm{E}}, \mathbb{N}_{12})$, have no relationship to any other elements in $A_{\mathrm{DM}}$.
    Thus, these pairs can neither be reduced to any other case, nor can any other case be reduced to these pairs.

\begin{rem}
    One of the motivations for this paper originates in the work of Gallardo, Kerr, and Schaffler \cite[Theorem 1.1]{GKS21}. 
    Since moduli of cubic surfaces also have a ball quotient model, one can ask questions similar to those discussed in this paper also in this case.  
    One result of their work says that in the case of the moduli space of cubic surfaces with all 27 lines marked, the toroidal compactification and the Kirwan blow-up coincide \cite[Theorem 1.4]{GKS21}. 
    In contrast, for the ball quotient obtained by forgetting the marking, these two compactifications are known not to be isomorphic \cite{CMGHL23}. As discussed in \cite{DvGK05,CMGHL25}, there is a whole hierarchy of
    moduli spaces of cubic surfaces depending on which partial marking one considers. Conceivably, the methods developed in this paper will also shed new light on these moduli spaces.  
\end{rem}

\begin{rem}
\label{rem:DengGallardo}
Recently, Deng and Gallardo \cite{DG25}
partially extended the classical theory of Deligne-Mostow by constructing eigenperiod maps for general configurations of points on $\mathbb{P}^1$, identifying more general 
conditions under which period maps admit extensions with discrete monodromy.
It would be interesting to investigate how far the ideas developed in this paper can also be applied to this more general setting.
\end{rem}

\section{Applications to birational geometry}
\label{section:applications}
Here, we shall prove Corollary \ref{maincor:applications} as a consequence of Theorem \ref{mainthm:main_extendability} when  $(\mathrm{T})$ fails.
First, we note that the inverse of (\ref{isom:compactified_sigma_int}) does not lift to a morphism between $X_{w,S}^{\T}\to M_{w,S}^{\K}$ by Theorem \ref{mainthm:main_extendability}.
In other words, (a part of) Theorem \ref{mainthm:main_extendability} implies that $M_{w,S}^{\K}$ cannot be located between $X_{w,S}^{\BB}$ and $X_{w,S}^{\T}$ as above.
Hence, it follows from \cite[Theorem 3.23]{AEH21} and \cite[Theorem 5.14]{AE23} that $M_{w,S}^{\K}$ is not a semi-toroidal compactification of $X_{w,S}$.
In this way, we obtain Corollary \ref{maincor:applications} (1).

Now, we can apply  Odaka's theorem \cite[Theorem 3.1]{Oda22}, asserting that a normal compactification of $X_{w,S}$ is a semi-toroidal compactification if and only if it is a log minimal model of itself.
Hence, we conclude that the pair $(M_{w,S}^{\K},\Delta_{w,S}^{\K})$ is not a log canonical log minimal model with a suitable boundary arising from the branch and exceptional divisors of the blow-up.
Arguing in a similar way to  \cite[Corollary 5.8, Theorem 5.9 (2)]{HKM24} this implies that $(M_{w,S}^{\K}, \Delta_{w,S}^{\K})$ and $(X_{w,S}^{\T}, \Delta_{w,S}^{\T})$ are not log $K$-equivalent.
This concludes Corollary \ref{maincor:applications} (2), (3).

\section{On the structure of the universe of Deligne-Mostow varieties}
\label{sec:The moduli spaces of partly ordered points}
Deligne 
and Mostow constructed period maps (\ref{mor:ordered_period_map}) and (\ref{mor:unordered_period_map}) via monodromy of Appell-Lauricella hypergeometric functions.
These maps depend on the numerical datum $n\in\Z$ and   $w\in\Q^n$.
Thurston gave a list in \cite[Appendix]{Thu98}, including cases which had been missed in the original list, see \cite[Proposition 2]{Dor04}.
In particular, the cases of the ball quotients being non-compact and algebraic are listed in \cite[Tables 2, 3]{GKS21}, where arithmetic subgroups arise from Hermitian forms defined either over the rings of Gaussian or Eisenstein integers.
The literature has so far focused primarily on two extremes: 
varieties satisfying the INT condition, which corresponds to fully ordered configurations, and the cases failing the INT condition but satisfying the $\Sigma$INT-$S$ condition, 
where $S$ is taken maximal, which corresponds to the most unordered scenario.
In this paper, we aim to consider all Deligne-Mostow varieties that may arise in between these two extremes, thus including cases that interpolate between full order and complete unordering. We do this 
by considering triples $(w,n,S)$, where $S \subset \mathbb{N}_n$ is a finite set such that $\Sigma$INT-$S$ is satisfied. 
This framework generalizes in a natural way both the INT condition, by taking $S$ to be a singleton, and the $\Sigma$INT-$S$ condition with $S$ maximal, thus providing a unified perspective.

In this section, in order to understand the relationships among these varieties, we provide a comprehensive list of all such cases and analyse how they generalize known examples in the literature.
The maximal cases are closely related to the ancestral cases, which have been studied in some detail in the literature \cite{Dor04, DDH18}, but there are more maximal Deligne-Mostow varieties than the two ancestral cases.

\subsection{The ancestral Deligne-Mostow varieties}
\label{subsection:The ancestral Deligne-Mostow varieties}
Here we recall the ancestral varieties, 
which occupy a special position in the Deligne-Mostow theory. As we saw in the main text, this notion has been generalized as maximal elements by a partial order $\prec$ in our concept, but it provides a concrete example and intuition for our main theorem.
There are two ball quotients, parameterizing ordered 8 points and unordered 12 points:
\begin{align*}
    X_{w_{\mathrm{G}}}^{\BB}&\cong M_{w_{\mathrm{G}}}^{\G}=(\P^1)^8/\!/_{w_{\mathrm{G}}}\SL_2(\C)\\ X_{w_{\mathrm{E}, \mathbb{N}_{12}}}^{\BB}&\cong M_{w_{\mathrm{E}}, \mathbb{N}_{12}}^{\G}=(\P^1)^{12}/\!/_{w_{\mathrm{E}}}\Sigma_{12}\times\SL_2(\C).
\end{align*}
Doran proved that all Deligne-Mostow varieties can be interpreted as a stratum, up to finite group actions, 
of either $X_{w_{\mathrm{G}}}^{\BB}$ or $X_{w_{\mathrm{E}, \mathbb{N}_{12}}}^{\BB}$ depending on whether the lattice is defined over the Gaussian or the Eisenstein integers.
The precise statement of the theorem is as follows.

\begin{thm}[{\cite[Theorem 5, Corollary 9]{Dor04}, \cite[Theorem 4]{DDH18}}]
\label{thm:doran}
Let $X$ be a non-compact Deligne-Mostow variety of arithmetic type in the terminology of \cite[Appendix]{Thu98}.
Then the arithmetic subgroup defining $X$ arises from a lattice over either the Gaussian or the Eisenstein integers and there is a finite morphism from $X$ to a sub-ball quotient of a symmetric group quotient of one of the ancestral Deligne-Mostow 
varieties, either $X_{w_{\mathrm{G}}}^{\BB}$ or $X_{w_{\mathrm{E}},\mathbb{N}_{12}}^{\BB}$.  The image is a locus defined by the property that certain points collide.
\end{thm}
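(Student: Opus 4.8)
The plan is to prove the statement in two largely independent steps: first, that non-compactness together with arithmeticity forces the underlying imaginary quadratic field to be either the Gaussian field $\Q(i)$ or the Eisenstein field $\Q(\omega)$; and second, that every such variety is realized as a sub-ball quotient of one of the two ancestral varieties $X_{w_{\mathrm{G}}}^{\BB}$ or $X_{w_{\mathrm{E}},\mathbb{N}_{12}}^{\BB}$ by specializing the ancestral equal-weight configuration so that prescribed groups of points collide. One could alternatively verify the field dichotomy by a finite inspection of the arithmetic non-compact cases in Thurston's list \cite{Thu98}, but the conceptual argument below is cleaner and explains why only these two fields occur.

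For the first step, I would recall that the Deligne-Mostow local system attached to $(n,w)$ has local monodromy around $p_j$ given by multiplication by $e^{2\pi\sqrt{-1}\,w_j}$; writing $d$ for the common denominator of the weights, the monodromy lattice is naturally a module over $\Z[\zeta_d]$, where $\zeta_d$ is a primitive $d$-th root of unity, so the invariant Hermitian form is defined over $E=\Q(\zeta_d)$. Arithmeticity places $\Gamma_w$ in the unitary group of this form, with $E$ a CM field and $F$ its maximal totally real subfield. I would then invoke non-compactness: if $F\neq\Q$, there is an archimedean place of $F$ other than the distinguished one at which $\Gamma_w$ must have a compact unitary factor, forcing the Hermitian form to be anisotropic there and hence, by the Hasse principle, anisotropic over $E$, so that the quotient would be compact. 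Therefore $F=\Q$ and $E$ is imaginary quadratic; since $E=\Q(\zeta_d)$ is cyclotomic, this gives $\varphi(d)=2$, i.e. $d\in\{3,4,6\}$, which yields exactly $E=\Q(i)$ (Gaussian) or $E=\Q(\omega)$ (Eisenstein).

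For the second step, I would exploit the behaviour of the hypergeometric data under collision of marked points: merging two points $p_i,p_j$ with $w_i+w_j<1$ into a single point of weight $w_i+w_j$ produces a Deligne-Mostow variety of one dimension lower that embeds as a totally geodesic sub-ball quotient, cut out inside the ambient ball quotient as the fixed locus of the associated complex reflection. Running this in reverse, in the Gaussian case $d=4$ forces each $w_i\in\{1/4,2/4,3/4\}$, so writing $w_i=k_i/4$ with $\sum_i k_i=8$ one recovers $X$ from the ancestral $(1/4)^8$ configuration by partitioning the eight points into $n$ groups of sizes $k_1,\dots,k_n$ and colliding each group; the Eisenstein case is identical with $1/4$ and $8$ replaced by $1/6$ and $12$. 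This realizes $X$ over the collision locus inside a symmetric-group quotient of the ancestral variety, and the image is precisely the locus where the prescribed points come together. Finiteness of the induced map $X\to(\text{collision locus})$ then follows because both are ball quotients of the same dimension $n-3$ and the relevant arithmetic lattices---$\Gamma_w$ and the restriction of the ancestral lattice to the totally geodesic sub-ball---are commensurable, so the morphism is finite onto its image.

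The main obstacle I anticipate lies in the second step, and specifically in the verification that colliding points yields a genuinely totally geodesic sub-ball quotient carrying the expected arithmetic structure, together with the commensurability comparison of the two lattices. The field dichotomy is essentially formal once non-compactness is invoked, but controlling how the Hermitian form and the monodromy lattice restrict under confluence of singular points---so that the sub-ball quotient is again a Deligne-Mostow variety with the merged weights and the induced morphism is finite---requires the careful local analysis of the hypergeometric connection carried out in \cite{Dor04, DDH18}, and is where the real content resides.
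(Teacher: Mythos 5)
The first thing to note is that the paper does not prove Theorem \ref{thm:doran} at all: it is stated as a quotation from \cite[Theorem 5, Corollary 9]{Dor04} and \cite[Theorem 4]{DDH18} and then used as an input in Section \ref{sec:The moduli spaces of partly ordered points} (e.g.\ to see that the ancestral pairs are maximal elements of $A_{\mathrm{DM}}$). So there is no internal argument to compare yours against; your proposal can only be measured against the cited proofs. On that score, your first step (the field dichotomy) is correct and is essentially the standard argument: the monodromy preserves a Hermitian form over $E=\Q(\zeta_d)$, arithmeticity forces the Galois-conjugate forms at all non-distinguished archimedean places of the totally real subfield $F$ to be definite, while non-compactness forces the form to be isotropic over $E$ by Godement's criterion, hence indefinite at \emph{every} archimedean place; these are compatible only if $F=\Q$, so $\varphi(d)=2$ and $d\in\{3,4,6\}$, i.e.\ Gaussian or Eisenstein. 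One small correction: the implication you need, ``anisotropic at one archimedean place $\Rightarrow$ anisotropic over $E$,'' is the trivial direction of local-global comparison (isotropy over $E$ passes to every completion); it is not the Hasse principle, which is the converse and is not needed.

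Your second step correctly identifies the mechanism behind Doran's theorem --- writing $w_i=k_i/4$ with $\sum_i k_i=8$ (resp.\ $k_i/6$ with $\sum_i k_i=12$) and realizing $X$ on the locus of the ancestral equal-weight configuration where the points collide in groups of sizes $k_1,\dots,k_n$ --- but as written it is a description of the result rather than a proof of it. The assertions that the collision locus is a totally geodesic sub-ball quotient whose induced lattice is commensurable with $\Gamma_{w,S}$, and that the resulting map is a finite morphism, are precisely the content of \cite[Theorem 5, Corollary 9]{Dor04} and \cite[Theorem 4]{DDH18}, and you defer them to those references (as you acknowledge). Note also that commensurability of two lattices does not by itself produce a morphism between the quotients: the map must first be constructed --- in \cite{Dor04} this is done by pulling back the hypergeometric local systems along maps of configuration spaces --- and only then does commensurability give finiteness. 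So judged as a self-contained argument, the proposal has a genuine gap exactly where you flag it; judged as a reconstruction of the architecture of the cited proof, it is accurate, and it is consistent with how the paper itself treats this statement, namely as a citation.
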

Later, as one consequence of this theorem, we will see that $(w_{\G}, \mathbb{N}_1)$ and $(w_{\mathrm{E}}, \mathbb{N}_{12})$ are indeed maximal elements in $A_{\mathrm{DM}}$ (but not the only ones).
The rest of this paper describes the case of the Gaussian integers exclusively, but a similar discussion also applies to the Eisenstein cases.
In Table \ref{table:details_ordered_Gaussian}, we present the inclusion relations for the Gaussian case explicitly.

\begin{rem}
    We explain the situation in the Gaussian case.
    Let $w_G,w_1,\cdots,w_5$ be the weights listed in \cite[Table 2]{GKS21} from top to bottom.
    We give the full list in Table \ref{table:details_ordered_Gaussian}. Note that in all of these cases, the INT condition holds.

\begin{table}[h]
 \caption{Details of $X_{w_i}^{\BB}$ for the Gaussian cases}
    \label{table:details_ordered_Gaussian}
    \centering
\begin{tabular}{|c|c|c|c|c|} \hline
   Name & Weights & $\dim(X_{w_i}^{\BB})$  & polystable points & stratification \\ \hline
    $w_G$ & $\left(\frac{1}{4}\right)^8$ & 5 & 35 & $(11111111)$ \\ \hline
    $w_1$ & $\left(\frac{1}{2}\right)\left(\frac{1}{4}\right)^6$ & 4 & 15 & $(2111111)$ \\ \hline
    $w_2$ & $\left(\frac{3}{4}\right)\left(\frac{1}{4}\right)^5$ & 3 & 5 & $(311111)$ \\ \hline
    $w_3$ & $\left(\frac{1}{2}\right)^2\left(\frac{1}{4}\right)^4$ & 3 & 7 & $(221111)$ \\ \hline
    $w_4$ & $\left(\frac{3}{4}\right)\left(\frac{1}{2}\right)\left(\frac{1}{4}\right)^3$ & 2 & 3 & $(32111)$ \\ \hline
    $w_5$ & $\left(\frac{1}{2}\right)^3\left(\frac{1}{4}\right)^2$ & 2 & 6 & $(22211)$ \\ \hline
 \end{tabular}
\end{table}

By Theorem \ref{thm:doran}, each $X_{w_i}$ above has a finite morphism to the ancestral Gaussian Deligne-Mostow variety $X_{w_{\mathrm{G}}}$.
These are closed immersions in the case that the INT condition is satisfied. These inclusions are listed in Figure \ref{fig:inclusive_relathionship_Gaussian}.

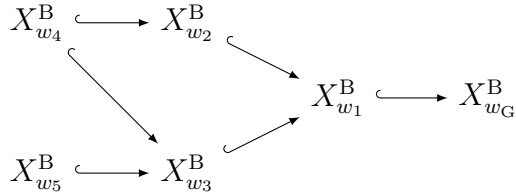
\begin{figure}[h]
\centering
\[
  \begin{tikzpicture}[
    labelsize/.style={font=\scriptsize},
    isolabelsize/.style={font=\normalsize},
    hom/.style={->,auto,labelsize},
    scale=1,
  ]
  
  % nodes
  \node(1) at (4,1) {$X_{w_1}^{\BB}$};
  \node(2) at (2,2) {$X_{w_2}^{\BB}$};
  \node(3) at (2,0) {$X_{w_3}^{\BB}$};
  \node(4) at (0,2) {$X_{w_4}^{\BB}$};
  \node(5) at (0,0) {$X_{w_5}^{\BB}$};
  \node(g) at (6,1) {$X_{w_{\mathrm{G}}}^{\BB}$};

  \draw[right hook-latex] (1) to node[]{} (g);
  \draw[right hook-latex] (4) to node[]{} (2);
  \draw[right hook-latex] (2) to node[]{} (1);
  \draw[right hook-latex] (3) to node[]{} (1);
  \draw[right hook-latex] (5) to node[]{} (3);
  \draw[right hook-latex] (4) to node[]{} (3);

  \end{tikzpicture}
\]
\caption{Inclusions of $X_{w_i}^{\BB}$ in the Gaussian case}
\label{fig:inclusive_relathionship_Gaussian}
\end{figure}
    
\end{rem}
 
\subsection{85 elements in $A_{\mathrm{DM}}$}
\label{subsec:Extremal elements}

In Subsection \ref{subsection:The ancestral Deligne-Mostow varieties}, we reviewed the basic facts about ancestral Deligne-Mostow varieties.
Gallardo, Kerr and Schaffler proved Theorem \ref{thm:GKS21}, reducing the problem to the ancestral cases by using Theorem \ref{thm:doran}.
In this subsection, we give the specific forms of maximal and minimal elements that play a crucial role in Theorem \ref{thm:SB_method} and the Reduction Method \ref{redmethodsub}.
Figure \ref{fig:inclusive_relathionship_Gaussian} shows that if the INT condition is satisfied for Gaussian cases, the ancestral pair $(w_{\G}, \mathbb{N}_1)$ is the unique maximal element.

In the tables below we shall use the notation $\mathbb{N}_{\{i,j\}}\defeq\{i,i+1,\cdots,j\}$.
The relationship between the (original) Gaussian Deligne-Mostow varieties, presented in Table \ref{table:details_ordered_Gaussian} and Figure \ref{fig:inclusive_relathionship_Gaussian}, is generalized to the Deligne-Mostow pairs given in Tables \ref{table:DM Gaussian}, \ref{table:DM Eisenstein}.
There, instead of merely considering the weights $w$, we examine the pairs $(w,S)$, indicating whether they satisfy condition $(\mathrm{T})$, and highlighting those that are either maximal among those satisfying $(\mathrm{T})$, or minimal among those not satisfying it.
Pairs that do not satisfy $(\mathrm{T})$ are marked as NT. Due to space constraints, we present the stratification not in terms of the original weights, but the weights are multiplied by 4 (resp. 6) in the Gaussian (resp. Eisenstein), thus making them integers. 
These weights also encode the stratification relations. 
Elements that are maximal with respect to satisfying $(\mathrm{T})$ are denoted by Max, while minimal elements that fail to satisfy $(\mathrm{T})$ are denoted by Min.
Finally, we can consider the equivalence relation $(w,S)\sim (w',S')$ induced by the partial order $\prec$. By a straight enumeration, we find that there are 10 (respectively 23) equivalence 
classes with respect to $\sim$ in the Gaussian and the  Eisenstein case.

\begin{table}
\caption{Gaussian Deligne-Mostow pairs}
\label{table:DM Gaussian}
\begin{tabular}{|c|c|c|c|c||c|c|c|c|c|}
\hline
$n$ & $4w$ & $S$ & $\mathrm{(T)?}$ &  extremal? & $n$ & $4w$ & $S$ & $\mathrm{(T)?}$ &  extremal? 
\\
\hline
8 & $(11111111)$ & $\mathbb{N}_1$ & T &Max & 6 & $(311111)$ 
&$\mathbb{N}_{\{2,4\}}$ & NT &
\\
\hline
8 & $(11111111)$ & $\mathbb{N}_2$ & T & Max & 6 & $(311111)$ 
&$\mathbb{N}_{\{2,5\}}$ & NT &
\\
\hline
8 & $(11111111)$ & $\mathbb{N}_3$ & NT & &6 & $(311111)$ 
&$\mathbb{N}_{\{2,6\}}$ & NT &Min
\\
\hline
8 & $(11111111)$ & $\mathbb{N}_4$ & NT & & 6 & $(221111)$ &$\mathbb{N}_1$ &T &
\\
\hline
8 & $(11111111)$ & $\mathbb{N}_5$ & NT & & 6 & $(221111)$ &$\mathbb{N}_2$ &T &
\\
\hline
8 & $(11111111)$ & $\mathbb{N}_6$  & NT & & 6 & $(221111)$ &$\mathbb{N}_{\{3,4\}}$ &T &
\\
\hline
8 & $(11111111)$ & $\mathbb{N}_7$& NT &Min & 6 & $(221111)$ &$\mathbb{N}_{\{3,5\}}$ & NT& 
\\
\hline
8 & $(11111111)$ & $\mathbb{N}_8$  & NT & Min & 6 & $(221111)$ &$\mathbb{N}_{\{3,6\}}$ &NT & Min
\\
\hline
7 & $(2111111)$  & $\mathbb{N}_1$& T & & 5 & $(32111)$ &$\mathbb{N}_1$ &T &
\\
\hline
7 & $(2111111)$  & $\mathbb{N}_{\{2,3\}}$  & T & & 5 & $(32111)$ &$\mathbb{N}_{\{3,4\}}$ & T&
\\
\hline
7 & $(2111111)$  & $\mathbb{N}_{\{2,4\}}$& NT & & 5 & $(32111)$ &$\mathbb{N}_{\{3,5\}}$ &T &
\\
\hline
7 & $(2111111)$  & $\mathbb{N}_{\{2,5\}}$ & NT & & 5 & $(22211)$  &$\mathbb{N}_1$ &T &
\\
\hline
7 & $(2111111)$  & $\mathbb{N}_{\{2,6\}}$ & NT& & 5 & $(22211)$  &$\mathbb{N}_{\{1,2\}}$ &T &
\\
\hline
7 & $(2111111)$  & $\mathbb{N}_{\{2,7\}}$ & NT& Min & 5 & $(22211)$  &$\mathbb{N}_{\{1,3\}}$ &T &
\\
\hline
6 & $(311111)$ 
&$\mathbb{N}_1$ &T & & 5 & $(22211)$  &$\mathbb{N}_{\{4,5\}}$ &T  &
\\
\hline
 6 & $(311111)$ 
&$\mathbb{N}_{\{2,3\}}$ &T & & & & & &
\\
\hline
\end{tabular}
\end{table}

\begin{table}
\caption{Eisenstein Deligne-Mostow pairs}
\label{table:DM Eisenstein}
\begin{tabular}{|c|c|c|c|c||c|c|c|c|c|}
\hline
$n$ & $6w$ & $S$ & $\mathrm{(T)?}$ &  extremal? & $n$ & $6w$ & $S$ & $\mathrm{(T)?}$ &  extremal? 
\\
\hline
12 & $(111111111111)$ & $\mathbb{N}_{12}$ & NT & Min & 6 & $(322221)$  &$\mathbb{N}_{\{2,5\}}$ &NT & Min
\\
\hline
11 &$(21111111111)$  & $\mathbb{N}_{\{2,11\}}$ & NT & Min & 6 & $(222222)$  &$\mathbb{N}_1$ &T &Max
\\
\hline
10 &$(3111111111)$ & $\mathbb{N}_{\{2,10\}}$ & NT & Min & 6 & $(222222)$  &$\mathbb{N}_2$ &T & Max
\\
\hline
10 & $(2211111111)$ & $\mathbb{N}_{\{3,10\}}$ & NT & & 6 & $(222222)$  &$\mathbb{N}_3$ & NT& 
\\
\hline
9 & $(411111111)$ & $\mathbb{N}_{\{2,9\}}$ & NT &Min & 6 & $(222222)$  &$\mathbb{N}_4$ &NT &
\\
\hline
9 &$(321111111)$ & $\mathbb{N}_{\{3,9\}}$  & NT & & 6 & $(222222)$  &$\mathbb{N}_5$ &T & Min
\\
\hline
9 & $(222111111)$ & $\mathbb{N}_{\{4,9\}}$& NT & & 6 & $(222222)$  &$\mathbb{N}_6$ & T& Min
\\
\hline
8 & $(22221111)$ & $\mathbb{N}_{\{5,8\}}$  & NT & & 5 & $(42222)$  &$\mathbb{N}_1$ &T &
\\
\hline
8 & $(32211111)$  & $\mathbb{N}_{\{4,8\}}$& NT & & 5 & $(42222)$  &$\mathbb{N}_{\{2,3\}}$ &T &
\\
\hline
8 & $(33111111)$  & $\mathbb{N}_{\{3,8\}}$  & NT &Min & 5 & $(42222)$  &$\mathbb{N}_{\{2,4\}}$  &NT & Min
\\
\hline
8 & $(51111111)$  & $\mathbb{N}_{\{2,8\}}$& NT &Min & 5 & $(42222)$  &$\mathbb{N}_{\{2,5\}}$ &NT & Min
\\
\hline
7 & $(5211111)$  & $\mathbb{N}_{\{3,7\}}$ & NT &Min & 5 & $(33222)$  &$\mathbb{N}_1$  &T  &
\\
\hline
7 & $(4221111)$  & $\mathbb{N}_{\{4,7\}}$ & NT& & 5 & $(33222)$  &$\mathbb{N}_2$  &T  & Max
\\
\hline
7 & $(4311111)$  & $\mathbb{N}_{\{3,7\}}$ & NT&Min & 5 & $(33222)$  &$\mathbb{N}_{\{3,4\}}$  &T  &
\\
\hline
7 & $(3321111)$ 
&$\mathbb{N}_{\{4,7\}}$ &NT & & 5 & $(33222)$  &$\mathbb{N}_{\{3,5\}}$  &NT  & Min
\\
\hline
 7 & $(3222111)$ 
&$\mathbb{N}_{\{5,7\}}$ &NT & & 5 & $(33321)$  &$\mathbb{N}_1$  &T  &
\\
\hline
 7 & $(2222211)$ 
&$\mathbb{N}_{\{6,7\}}$ &T &Max & 5 & $(33321)$  &$\mathbb{N}_2$  &T  &
\\
\hline
 6 & $(531111)$ 
&$\mathbb{N}_{\{3,6\}}$ &NT &Min & 5 & $(33321)$  &$\mathbb{N}_3$  &NT  &Min
\\
\hline
 6 & $(522111)$ 
&$\mathbb{N}_{\{4,6\}}$ &NT &Min & 5 & $(44211)$  &$\mathbb{N}_{\{4,5\}}$  &T  &
\\
\hline
 6 & $(422211)$ 
&$\mathbb{N}_{\{5,6\}}$ &T & & 5 & $(43322)$  &$\mathbb{N}_{\{4,5\}}$  &T  &
\\
\hline
 6 & $(432111)$ 
&$\mathbb{N}_{\{4,6\}}$ &NT &Min & 5 & $(43221)$  &$\mathbb{N}_1$  &T  &
\\
\hline
 6 & $(441111)$ 
&$\mathbb{N}_{\{3,6\}}$ &NT &Min & 5 & $(43221)$  &$\mathbb{N}_{\{3,4\}}$  &T  &
\\
\hline
 6 & $(333111)$ 
&$\mathbb{N}_{\{4,6\}}$ &NT &Min & 5 & $(52221)$  &$\mathbb{N}_1$  &T  &
\\
\hline
 6 & $(332211)$ 
&$\mathbb{N}_{\{5,6\}}$ &T  & & 5 & $(52221)$  &$\mathbb{N}_{\{2,3\}}$  &T  &
\\
\hline
  6 & $(322221)$ 
& $\mathbb{N}_1$ & T  &Max & 5 & $(52221)$  &$\mathbb{N}_{\{2,4\}}$  &NT  & Min
\\
\hline
 6 & $(322221)$ 
&$\mathbb{N}_{\{2,3\}}$ & T &Max & 5 & $(53211)$  &$\mathbb{N}_{\{4,5\}}$  &T  &
\\
\hline
6 & $(322221)$ 
&$\mathbb{N}_{\{2,4\}}$ & NT & & 5 & $(54111)$  &$\mathbb{N}_{\{3,5\}}$  &T  &
\\
\hline
\end{tabular}
\end{table}

\newpage

\end{document}